\definecolor{myblue}{rgb}{0.0, 0.0, 1.0}
\definecolor{mygreen}{rgb}{0.01,0.75,0.20}
\newtheorem{theorem}{Theorem}[section]
\newtheorem{lemma}[theorem]{Lemma}
\newtheorem{proposition}[theorem]{Proposition}
\newtheorem{example}[theorem]{Example}
\newtheorem{definition}[theorem]{Definition}
\newtheorem{remark}[theorem]{Remark}
\newtheorem{remarks}[theorem]{Remarks}
\theoremstyle{definition}
\numberwithin{equation}{section}
\newcommand{\diver}{\mathrm{div}\,}
\newcommand{\dx}{\,\mathrm{d}x}
\newcommand{\dt}{\,\mathrm{d}t}
\newcommand{\dnu}{\,\mathrm{d}\nu}
\newcommand{\dmu}{\,\mathrm{d}\mu}
\newcommand{\dGamma}{\,\mathrm{d}\Gamma}
\newcommand{\dtildeGamma}{\,\mathrm{d}\tilde\Gamma}
\newcommand{\dtildeGa}{\,\mathrm{d}\tilde\Gamma}
            \def\gg{\gamma}
       \def\gd{\delta}      
\def \gth{\theta}                         \def\vge{\varepsilon}
            \def\gl{\lambda}
    \def\gr{\rho}        
      \def\gw{\omega}
     \def\Gd{\Delta}      
    \def\Gs{\Sigma}      
\def\Gw{\Omega}              
\DeclarePairedDelimiter\norm{\lVert}{\rVert}%
\let\oldnorm\norm
\def\norm{\@ifstar{\oldnorm}{\oldnorm*}}
\newcommand{\al} {\alpha}
\newcommand{\De} {\Delta}
\newcommand{\Ga} {\Gamma}
\newcommand{\om} {\omega}
\newcommand{\Om} {\Omega}
\newcommand{\noi} {\noindent}
\newcommand{\ra} {\rightarrow}
\newcommand{\wra} {\rightharpoonup}
\newcommand{\DpA}{\D_A^{1,p}(\Om)}
\newcommand{\DpAV}{\D_{A,V^+}^{1,p}(\Om)}
\newcommand{\wrastar} {\overset{\ast}{\rightharpoonup}}
\newcommand\restr[2]{{
  \left.\kern-\nulldelimiterspace 
  #1 
  \right|_{#2} 
  }}
\def\w{{\widetilde w}}
\def\w2{{W^{1,2}_0(\Om)}}
\def\hh2{{H^1_0(\Om)}}
\def\C{{\mathcal C}}
\def\D{{\mathcal D}}
\def\N{{\mathbb N}}
\def\F{{\mathcal F}}
\def\M{{\mathcal M}}
\def\S{\mathbb{S}}
\def\cp{{\rm Cap}_{\bf{u}}}
\def\cpsi{{\rm Cap}_{\bf{\psi}}}
\def\R{{\mathbb R}}
\def\({{\Big(}}
\def\){{\Big)}}
\def\ws2{{\F_{\frac{N}{2}}}}
\def\c1{{\C_c^1}}
\def\d{{\rm d}}
\def\dt{{\rm d}t}
\def\dx{{\rm d}x}
\def\H{{\mathcal{H}}}
\newcommand\supp{\mathrm{supp}\,}
\newcommand{\Hmm}[1]{\leavevmode{\marginpar{\tiny%
			$\hbox to 0mm{\hspace*{-0.5mm}$\leftarrow$\hss}%
			\vcenter{\vrule depth 0.1mm height 0.1mm width \the\marginparwidth}%
			\hbox to
			0mm{\hss$\rightarrow$\hspace*{-0.5mm}}$\\\relax\raggedright #1}}}
\newcommand{\loc}{{\rm loc}}
\begin{document}
	\title[The space of Hardy-weights]{The space of Hardy-weights for quasilinear equations: Maz'ya-type characterization and sufficient conditions for existence of minimizers}
	
	\author {Ujjal Das}
	
	\address {Ujjal Das, Department of Mathematics, Technion - Israel Institute of
		Technology,   Haifa, Israel}
	
	\email {ujjaldas@campus.technion.ac.il}
	
	\author{Yehuda Pinchover}
	
	\address{Yehuda Pinchover,
		Department of Mathematics, Technion - Israel Institute of
		Technology,   Haifa, Israel}
	
	\email{pincho@technion.ac.il}
	
	\begin{abstract}
		Let $p \in (1,\infty)$ and  $\Gw\subset \R^N$ be a domain. Let  $A: =(a_{ij}) \in  L^{\infty}_{\loc}(\Om;\R^{N\times N})$ be  a symmetric and locally uniformly
		positive definite matrix. Set 
		$|\xi|_A^2:= \displaystyle \sum_{i,j=1}^N a_{ij}(x) \xi_i \xi_j$, $\xi \in \R^N$, and let $V$ be a given potential in a certain local Morrey space. 
		We assume that the energy functional  
		$$Q_{p,A,V}(\phi):=\displaystyle\int_{\Om} [|\nabla \phi|_A^p + V|\phi|^p] \dx $$
		is nonnegative in $W^{1,p}(\Om)\cap C_c(\Om)$.
		
		We introduce a generalized notion of $Q_{p,A,V}$-capacity and characterize  the space of all Hardy-weights for the functional $Q_{p,A,V}$, extending Maz'ya's well known characterization of the space of Hardy-weights for the $p$-Laplacian. In addition, we provide various sufficient conditions on the potential $V$ and the Hardy-weight $g$ such that the best constant of the corresponding variational  problem is attained in an appropriate Beppo-Levi space.

		\medskip
		
		\noindent  2000  \! {\em Mathematics  Subject  Classification.}
		Primary  \! 49J40; Secondary  31C45, , 35B09, 35J62.\\[1mm]
		\noindent {\em Keywords:} Beppo-Levi space, capacity, Hardy-type inequality, quasilinear elliptic equation, concentration compactness, criticality theory, positive solutions.
	\end{abstract}
\maketitle
\section{Introduction} 
 Let $\Om \subset \R^N$ be a domain, and $p \in (1,\infty)$. Let  $A: =(a_{ij}) \in  L^{\infty}_{\loc}(\Om;\R^{N\times N})$ be  a symmetric and locally uniformly
positive definite matrix. Set  
$$|\xi|_A^2:= \, \braket{A(x)\xi,\xi}\, = \displaystyle \sum_{i,j=1}^N a_{ij}(x) \xi_i \xi_j \,,  \qquad  \ x\in \Om, \, \, \xi=(\xi_1,\ldots ,\xi_N) \in \R^N,$$
and let $\De_{p,A} u:= \diver(|\nabla u|^{p-2}_A A(x)\nabla u)$ be the $(p,A)$-Laplacian of $u$ (see \cite{HKM,Yehuda_Georgios}).

For a potential $V$ in $\mathcal{M}^{q}_{\loc}(p;\Om)$,  a certain local Morrey space (see Definition~\ref{def-morrey}), we consider the following quasilinear elliptic equation:
\begin{align} \label{PDE}
	Q'_{p,A,V}[u]:=-\De_{p,A} u + V|u|^{p-2}u=0 \qquad \mbox{in} \ \Om,
\end{align}
together with its energy functional
$$Q_{p,A,V}(\phi):=\displaystyle\int_{\Om} [|\nabla \phi|_A^p + V|\phi|^p] \dx \qquad \phi \in  W^{1,p}(\Om)\cap C_c(\Om).$$ 
{\bf Throughout the paper we assume that $Q_{p,A,V}\geq 0$ on $W^{1,p}(\Om)\cap C_c(\Om)$.}

We recall that, by the Agmon-Allegretto-Piepenbrink-type theorem \cite[Theorem~4.3]{Yehuda_Georgios}, $Q_{p,A,V}\geq 0$ on $W^{1,p}(\Om)\cap C_c(\Om)$ if and only if  \eqref{PDE} admits a weak positive solution (or positive supersolution) in $W^{1,p}_\loc(\Om)$.

\medskip

The first aim of the present paper is to characterize the space of all functions  $g \in L^1_{\loc}(\Om)$ such that the following Hardy-type inequality holds:
\begin{align} \label{WH}
\int_{\Om} |g| |\phi|^p \dx \leq C Q_{p,A,V}(\phi) \qquad  \forall \phi \in W^{1,p}(\Om)\cap C_c(\Om)   
\end{align}
for some $C>0$.  
A function $g$ satisfying  \eqref{WH} is called a {\em Hardy-weight} of $Q_{p,A,V}$ in $\Gw$. We denote the space of all Hardy-weights by 
$$\mathcal{H}_p(\Om,V):=\{g \in L^1_{\loc}(\Om) \mid  g  \text{ satisfies }  \eqref{WH}\}.$$
  If $\mathcal{H}_p(\Om,V)=\{0\}$, then  $Q_{p,A,V}$ is said to be {\em critical in} $\Gw$, otherwise,  $Q_{p,A,V}$ is {\em subcritical in} $\Gw$ \cite{Yehuda_Georgios}. If $Q_{p,A,V} \not \geq 0$ on  $W^{1,p}(\Om)\cap C_c(\Om)$, then $Q_{p,A,V}$ is said to be {\em supercritical in} $\Gw$.  
Recall that for $p \!\in \!(1,N)$, $\Gw\!=\! \R^N \! \setminus \! \{0\}$, $V\! = \! 0$, $A\!=\! I_{N \times N},$ and  $g(x)={C(p,N)}/{|x|^p}$, inequality \eqref{WH} corresponds to the classical Hardy inequality.

In the context of improving the classical Hardy inequality many examples of  Hardy-weights were produced,
  see \cite{Adimurthy_Mythily,BV,DFP,DP,Filippas} and the references therein. For $p=2$, $A=I_{N \times N}$, and bounded domain $\Om$,    it is well known that $L^r(\Om)\subset \mathcal{H}_{2}(\Om,{\bf{0}})$ with $r>\frac{N}{2}$  \cite{Manes-Micheletti}, $r=\frac{N}{2}$ \cite{Allegretto}. 
  Moreover, using the Lorentz-Sobolev embedding,  Visciglia \cite{Visciglia} showed that the Lorentz space $L^{\frac{N}{p},\infty}(\Om)\subset \mathcal{H}_{p}(\Om,{\bf{0}})$ when  $p\in (1,\infty)$, $A=I_{N \times N}$, and  $\Om$ is a general domain. In \cite[Theorem 8.5]{Mazya2}, Maz'ya gave an intrinsic characterization of a Hardy-weight (for the $p$-Laplacian) using the notion of $p$-capacity. In the recent paper \cite{AU}, using Maz'ya's characterization, the authors  introduced a norm on $\mathcal{H}_p(\Om,{\bf{0}})$, the space of all Hardy-weights for the $p$-Laplacian, making $\mathcal{H}_p(\Om,{\bf{0}})$ a Banach function space (see Definition~\ref{BFC}). 
  
  Inspired by \cite{Bianchini}, we extend the classical definition of $p$-capacity on compact sets in $\Gw$ to the case of the nonnegative 	functional $Q_{p,A,V}$ (see a detailed discussion in Subsection~\ref{subsec-cap}). 
  \begin{definition}[$Q_{p,A,V}$-capacity] \label{Def:Cap} {\em Let ${\bf{u}} \in W^{1,p}_{\loc}(\Om)\cap C(\Om)$ be a positive function.  
  		For a compact set $F \Subset \Om$, the {\em $Q_{p,A,V}$-capacity of $F$ with respect to $({\bf{u}},\Om)$} is defined by
  		$${\rm{Cap}}_{\bf{u}}(F,\Om):=\inf \{Q_{p,A,V}(\phi) \mid\phi \in \mathcal{N}_{F,\bf{u}}(\Om)\} \,,$$
  		where $\mathcal{N}_{F,\bf{u}}(\Om):=\{\phi \in  W^{1,p}(\Om)\cap C_c(\Om) \mid \phi \geq {\bf{u}} \ \text{on} \ F\}$. 
  	}
  \end{definition}
   Now we are in a position to extend  to our setting, the definition of a norm on the space of Hardy-weights. Let ${\bf{u}} \in W^{1,p}_{\loc}(\Om)\cap C(\Om)$ be a positive solution of \eqref{PDE}. Then, we equip $\mathcal{H}_p(\Om,V)$ with the norm:
\begin{eqnarray*}
	\norm{g}_{\H_p(\Om,V)} := \sup\left\{ \frac{\int_{F} |g||{\bf{u}}|^p \dx}{\cp(F,\Om)} \, \, \biggm| \,  F \Subset \Om \mbox{ is compact set s.t. } \cp(F,\Om)\ne 0 \right\},              
 \end{eqnarray*}
which is well defined due to the following extension of Maz'ya's well known characterization of the space of Hardy-weights for the $p$-Laplacian \cite[Theorem 8.5]{Mazya2}.
\begin{theorem} \label{Thm:Char}
Let $p \in (1,\infty)$ and  $g \in L^1_{\loc}(\Om)$. Then $\norm{g}_{\H_p(\Om,V)}<\infty$ if and only if the Hardy-type inequality \eqref{WH} holds.  Moreover, let $\mathcal{B}_g(\Om,V)$ be the best constant in \eqref{WH}, then
$$\norm{g}_{\H_p(\Om,V)} \leq \mathcal{B}_g(\Om,V) \leq C_H \norm{g}_{\H_p(\Om,V)},$$
where $C_H$ is  independent of $g$. Furthermore, $\|g\|_{\mathcal{B}(\Om,V)} :=\mathcal{B}_g(\Om,V)$ is an equivalent norm on $\mathcal{H}_p(\Om,V)$. In particular, up to the equivalence relation of norms, the norm $\norm{\cdot}_{\H_p(\Om,V)}$ is independent of the positive solution ${\bf{u}}$.
\end{theorem}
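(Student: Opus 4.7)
The plan is to adapt Maz'ya's classical dyadic/capacitary argument to the ground-state framework supplied by the positive solution $\mathbf{u}$. Throughout I write $Q$ for $Q_{p,A,V}$ and restrict to nonnegative $\phi \in W^{1,p}(\Om)\cap C_c(\Om)$ without loss of generality. The easy inequality $\norm{g}_{\mathcal{H}_p(\Om,V)} \le \mathcal{B}_g(\Om,V)$ is almost immediate from the definitions: for any compact $F\Subset\Om$ and any $\phi \in \mathcal{N}_{F,\mathbf{u}}(\Om)$, the pointwise inequality $\phi \ge \mathbf{u}\ge 0$ on $F$ combined with \eqref{WH} yields $\int_F |g|\mathbf{u}^p\dx \le \int_\Om |g|\phi^p\dx \le \mathcal{B}_g(\Om,V)\, Q(\phi)$; infimizing over admissible $\phi$ gives $\int_F |g|\mathbf{u}^p\dx \le \mathcal{B}_g(\Om,V)\, \cp(F,\Om)$, and the sup over $F$ yields the bound. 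In particular the intrinsic norm is finite whenever \eqref{WH} holds.

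For the converse direction I would perform the ground-state change of variables $\phi = \mathbf{u} w$ (legitimate because $\mathbf{u}$ is positive and continuous) and decompose dyadically along the open level sets $E_k := \{w > 2^k\}$, which are relatively compact in $\Om$ since $\phi$ has compact support. The elementary bound
\begin{equation*}
\int_\Om |g|\phi^p\dx = \sum_{k\in\mathbb{Z}} \int_{E_k\setminus E_{k+1}} |g|\mathbf{u}^p w^p\dx \le 2^p \sum_{k\in\mathbb{Z}} 2^{pk}\int_{E_k} |g|\mathbf{u}^p\dx,
\end{equation*}
combined with $\int_{E_k}|g|\mathbf{u}^p\dx \le \norm{g}_{\mathcal{H}_p(\Om,V)}\,\cp(E_k,\Om)$ (obtained by exhausting each $E_k$ by compact subsets), reduces the Hardy-type inequality \eqref{WH} to the weighted capacitary estimate $\sum_{k\in\mathbb{Z}} 2^{pk}\cp(E_k,\Om) \le C_p\, Q(\phi)$.

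To bound $\cp(E_{k+1},\Om)$ I would use the admissible truncations $u_k := \min(\phi,2^{k+1}\mathbf{u}) - \min(\phi,2^k\mathbf{u}) \in W^{1,p}(\Om)\cap C_c(\Om)$, which vanish on $\{w\le 2^k\}$ and equal $2^k\mathbf{u}$ on $\{w\ge 2^{k+1}\}$; consequently $2^{-k}u_k \in \mathcal{N}_{F,\mathbf{u}}(\Om)$ for every compact $F\subset E_{k+1}$ and hence $\cp(E_{k+1},\Om)\le 2^{-kp}Q(u_k)$. The decisive step is then $\sum_k 2^{pk}Q(u_k) \le C_p\, Q(\phi)$. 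In Maz'ya's setting with $V\equiv 0$, $A=I$, and $\mathbf{u}\equiv 1$, the gradient of $u_k$ is supported in the dyadic annulus $\mathcal{A}_k:=\{2^k\mathbf{u}<\phi<2^{k+1}\mathbf{u}\}$ and the annuli are pairwise disjoint, which makes the estimate automatic. Here the obstacle is that $u_k$ plateaus at $2^k\mathbf{u}$ above $\mathcal{A}_k$, so both the gradient contribution (which reduces on the plateau to $2^{kp}|\nabla\mathbf{u}|_A^p$) and the potential term $Vu_k^p$ have nested supports and naive summation diverges. This is the main technical obstacle, and I would resolve it by exploiting that $\mathbf{u}$ is a positive solution of \eqref{PDE}: testing $Q'_{p,A,V}[\mathbf{u}]=0$ against the compactly supported function $\mathbf{u}\,h_k(w)^p$, where $h_k$ is a Lipschitz ramp supported on $\{w\ge 2^k\}$ and equal to $1$ on $\{w\ge 2^{k+1}\}$, rewrites the plateau contribution $\int_{\{w\ge 2^{k+1}\}}[|\nabla\mathbf{u}|_A^p + V\mathbf{u}^p]\dx$ as a flux-type integral localized to $\mathcal{A}_k$, in the spirit of the Picone/ground-state transform identities from \cite{Yehuda_Georgios}. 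Once every contribution to $Q(u_k)$ is localized to $\mathcal{A}_k$, disjointness of the annuli together with elementary convex estimates produces the required bound by a constant multiple of $Q(\phi)$.

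With the two-sided inequality $\norm{g}_{\mathcal{H}_p(\Om,V)} \le \mathcal{B}_g(\Om,V) \le C_H \norm{g}_{\mathcal{H}_p(\Om,V)}$ in hand, $\mathcal{B}_g$ is manifestly a sub-additive and absolutely homogeneous functional on $\mathcal{H}_p(\Om,V)$, hence an equivalent norm; and because $\mathcal{B}_g$ does not involve the positive solution $\mathbf{u}$ at all, the independence of $\norm{\cdot}_{\mathcal{H}_p(\Om,V)}$ from the chosen $\mathbf{u}$, up to equivalence of norms, is immediate.
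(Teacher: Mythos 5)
Your overall architecture coincides with the paper's: the easy inequality $\norm{g}_{\H_p(\Om,V)}\le \mathcal{B}_g(\Om,V)$ is proved exactly as in the paper, and the hard direction follows the same Maz'ya-type scheme (dyadic decomposition of $\int_\Om |g||\mathbf{u}w|^p\dx$ over the level sets of $w=\phi/\mathbf{u}$, each level set's capacity estimated by a ramp-type truncation, then resummation). You also correctly identify where the classical argument breaks: the truncations plateau at a multiple of $\mathbf{u}$, and because $V$ changes sign the plateau contributions $2^{pk}\int_{\{w\ge 2^{k+1}\}}[|\nabla\mathbf{u}|_A^p+V\mathbf{u}^p]\dx$ are neither individually nonnegative nor controlled by $Q_{p,A,V}(\phi)$. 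However, your resolution of this — "testing $Q'_{p,A,V}[\mathbf{u}]=0$ against $\mathbf{u}h_k(w)^p$ rewrites the plateau contribution as a flux-type integral localized to $\mathcal{A}_k$", after which "elementary convex estimates" finish — is asserted rather than proved, and this is precisely the step the paper flags as the main difficulty. The paper's execution of this idea is the \emph{simplified energy functional} $E_{\mathbf{u}}$ of Definition~\ref{Def:SEF} together with the two-sided equivalence $C_1Q_{p,A,V}(\mathbf{u}\rho)\le E_{\mathbf{u}}(\rho)\le C_2Q_{p,A,V}(\mathbf{u}\rho)$ (Proposition~\ref{Prop:simp_energy}, imported from \cite{PTT,PinRa}). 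Since $E_{\mathbf{u}}(\rho)$ carries the factor $|\nabla\rho|_A^2$, the upper bound kills the plateau outright and localizes each capacity estimate to the annulus, and the lower bound reassembles the sum into $E_{\mathbf{u}}(w)\lesssim Q_{p,A,V}(\phi)$; without invoking this equivalence (or reproving it), your sketch does not close.

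A second, concrete defect in the sketch: the claim that disjointness of the annuli plus convexity finishes the summation fails as stated for $1<p<2$. After localization the integrand is $\mathbf{u}^2|\nabla\rho_k|_A^2\left(\rho_k|\nabla\mathbf{u}|_A+\mathbf{u}|\nabla\rho_k|_A\right)^{p-2}$ with a \emph{negative} exponent, and for the linear ramp $\rho_k$ one cannot bound $\rho_k$ from below by the full $2^{-k}w$ in the direction the comparison requires; this is why the paper must replace the linear ramp by its $2/p$-th power (the function $\widetilde\psi_j$ in Case~2 of the proof). Your argument as written only covers $p\ge 2$. (Minor: with your normalization $\cp(E_{k+1},\Om)\le 2^{-kp}Q(u_k)$, the target estimate should read $\sum_k Q(u_k)\le C_p\,Q(\phi)$, not $\sum_k 2^{pk}Q(u_k)\le C_pQ(\phi)$.) The concluding remarks on $\mathcal{B}_g$ being an equivalent norm independent of $\mathbf{u}$ are fine.
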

 We would like to remark that the main difficulty in proving Theorem~\ref{Thm:Char} arises due to the sign-changing behaviour of the potential  $V$. We overcome this difficulty by using the $Q_{p,A,V}$-capacity and the simplified energy functional (\cite{PinRa,PTT}, see Definition~\ref{Def:SEF}). In view of Theorem~\ref{Thm:Char}, we identify the space of Hardy-weights as 
$$\mathcal{H}_p(\Om,V)=\left\{g \in L^1_{\loc}(\Om) \mid \|g\|_{\H_p(\Om,V)}<\infty \right\}.$$  
In fact,  $\mathcal{H}_p(\Om,V)$ is a Banach function space (see Definition~\ref{BFC}), 
and in particular, $\mathcal{H}_p(\Om,V)$ is a Banach space. Moreover, under some conditions we show that certain weighted Lebesgue spaces are embedded in $\mathcal{H}_p(\Om,V)$ (see Theorem~\ref{Thm:Weighted}). 
\begin{definition}[Beppo-Levi space]\label{def_BL}\rm 
	The {\em generalized Beppo-Levi space} $\mathcal{D}_{A,V^+}^{1,p}(\Om)$ is the  completion of $W^{1,p}(\Om)\cap C_c(\Om)$ with respect to the norm 
		$$\norm{\phi}_{\mathcal{D}_{A,V^+}^{1,p}(\Om)} :=  \left[\||\nabla \phi|_A\|_{L^p(\Om)}^p+\|\phi\|_{_{L^p(\Om,V^+\dx)}}^p \right]^{1/p} \,.$$ 
\end{definition}
Recall that for $g\in \mathcal{H}_p(\Om,V)$,  $\mathcal{B}_g(\Om,V)$ is the best constant for the inequality  \eqref{WH}, i.e.,  
$$\frac{1}{\mathcal{B}_g(\Om,V)}= \S_g(\Om,V):=\inf \{Q_{p,A,V}(\phi) \bigm| \phi \in W^{1,p}(\Om) \cap C_c(\Om) \,, \int_{\Om} |g| |\phi|^p \dx =1\}.$$
We say that  {\em the best constant $\mathcal{B}_g(\Om,V)$ is attained} if $\S_g(\Om,V)$ is attained in $\mathcal{D}_{A,V^+}^{1,p}(\Om)$.
\begin{remark}
{\em Generally speaking, for $p\neq2$, the nonnegative functional $Q_{p,A,V}^{1/p}$ does not define a norm on $W^{1,p}(\Om)\cap C_c(\Om)$ unless $V\geq 0$ (see the discussion in \cite[Section~6]{PT07}). Therefore, to a subcritical functional $Q_{p,A,V^+}$, we associate the  Banach space $\mathcal{D}_{A,V^+}^{1,p}(\Om)$ which is  a complete, separable and reflexive  Banach space of functions (see Remark \ref{Remark_ref}).
We note that  the space $\mathcal{D}_{A,V}^{1,p}(\Om)$  for $V$ satisfying $\inf V>0$ appears in the literature (see for example \cite{Bartolo} and references therein),  in particular for $p=2$, the space $\mathcal{D}_{A,V}^{1,2}(\Om)$ when $\inf V>0$ appears in spectral theory of Schr\"odinger operators (see for example \cite{Benci}). 
} 
\end{remark} 
In the present paper, we also investigate the attainment of  $\S_g(\Om,V)$ in $\mathcal{D}_{A,V^+}^{1,p}(\Om)$. In this context, using the standard variational methods, it is easily seen that if $V^-=0$ and the map
$$T_g(\phi) := \int_\Gw |g| |\phi|^p \dx$$
is compact on the Beppo-Levi space $\mathcal{D}_{A,V^+}^{1,p}(\Om)$, then $\S_g(\Om,V)$ is attained in $\mathcal{D}_{A,V^+}^{1,p}(\Om)$. The second aim of the paper is to characterize the space of all $g \in \mathcal{H}_p(\Om,V)$ for which $T_g$
is compact on $\mathcal{D}_{A,V^+}^{1,p}(\Om)$. As in \cite[Section 2.4.2, pp. 130]{Mazya_book}, \cite[Theorem 8]{AU}, we give a necessary and sufficient condition on $g$ for which $T_g$ is compact on $\mathcal{D}_{A,V^+}^{1,p}(\Om)$, see Theorem \ref{Cpct_Char}. Moreover, we identify a subspace of $\mathcal{H}_p(\Om,V)$ that ensure the compactness of $T_g$.
Let
 $$\mathcal{H}_{p,0}(\Om,V):=\overline{\mathcal{H}_{p}(\Om,V) \cap L_c^{\infty}(\Om)}^{\|.\|_{\mathcal{H}_{p}(\Om,V)}}.$$
For $A=I_{N\times N}$ and $V=0$, it has been shown in \cite[Theorem 8]{AU} that  if $g \in \mathcal{H}_{p,0}(\Om,{\bf{0}})$, then $T_g$ is compact in $\mathcal{D}_{I,0}^{1,p}(\Om)$. In the present paper, we prove the analogous result for a symmetric, and locally uniformly positive definite matrix  $A: =(a_{ij}) \in  L^{\infty}_{\loc}(\Om;\R^{N\times N})$, and a potential $V$, see Theorem \ref{Thm:compactness} and Remark~\ref{Rmk:Hptilde}-$(i)$.
In fact, under some further assumptions on  the matrix $A$, the converse is also true for $p\in (1,N)$ (see Remark \ref{Rmk:Hptilde}-$(ii)$). 
 
\medskip

It is worth mentioning that $\S_g(\Om,V)$ might be achieved in $\mathcal{D}_{A,V^+}^{1,p}(\Om)$ without $T_g$ being compact. This leads to our next aim, which is to provide weaker sufficient conditions on $V$ and $g$ such that $\S_g(\Om,V)$ is attained by a function in $\mathcal{D}_{A,V^+}^{1,p}(\Om)$. For $A=I_{N \times N}$ and $V = 0$, such problems have been studied in \cite{AU,Smets,Tertikas} using the well known concentration compactness arguments due to P.~L.~Lions \cite{Lions1a, Lions2a}. In the present article, we give two different sufficient conditions such that $\S_g(\Om,V)$ is attained by a function in $\mathcal{D}_{A,V^+}^{1,p}(\Om)$ (see theorems~\ref{Thm:best_constant} and \ref{bestconst_thm}). In each theorem we assume that $Q_{p,A,V}$ admits a (different) spectral gap-type condition, but, while the proof of Theorem~\ref{Thm:best_constant} is based on criticality theory, the proof of Theorem~\ref{bestconst_thm} uses the standard concentration compactness method.

\begin{remarks}{\em 
(i) If the operator $Q_{p,A,V-\S_g(\Om)|g|}$ is subcritical in a domain $\Gw$, then the best Hardy constant in \eqref{WH} is not attained \cite{Yehuda_Georgios}.  

(ii) If $g\in \mathcal{H}_p(\Om,V)$ is an optimal Hardy-weights (i.e.,$Q_{p,A,V-\S_g(\Om)|g|}$ is null-critical with respect to $g$ in $\Gw$) \cite{DFP,DP}, then the best Hardy constant in \eqref{WH} is not attained.}
\end{remarks}

\section{Preliminaries}\label{sec-prelim}
\subsection{Notation}\label{sub-notation}
Throughout the paper, we use the following notation and conventions:
\begin{itemize}
	\item For $R\!>\!0$ and $x\! \in\! \R^n$, we denote by $B_R(x)$ the open ball of radius $R$ centered at $x$.
	\item $\chi_S$ denotes the characteristic function of a set $S\subset \R^n$.	
	\item We write $A_1 \Subset A_2$ if  $\overline{A_1}$ is a compact set, and $\overline{A_1}\subset A_2$.
	\item For any subset $A \subset \R^N$, we denote the interior of $A$ by $\mathring{A}$.
	\item $C$ refers to  a positive constant which may vary from  line to line.
	\item Let $g_1,g_2$ be two positive functions defined in $\gw$. We use the notation $g_1\asymp g_2$ in
	$\gw$ if there exists a positive constant $C$ such
	that
	$C^{-1}g_{2}(x)\leq g_{1}(x) \leq Cg_{2}(x)$ for all  $x\in \gw$.	
	\item For any real valued measurable function $u$ and $\omega\subset \R^n$, we denote
	$$\inf_{\omega}u:=\mathrm{ess}\inf_{\omega}u, \quad \sup_{\omega}u:=\mathrm{ess}\sup_{\omega}u, \quad  u^+:=\max(0,u), \quad u^-:=\max(0,-u).$$
	\item For a subspace $X(\Om)$ of measurable functions on $\Gw$, 
 $X_c(\Om) :=\{f \!\in \! X(\Om)\!\mid\! \supp f \!\Subset\! \Om\}$.  
\item Let $(X,\|.\|_X)$ be a normed space, and $Y \subset X$, $\overline{Y}^{\|\cdot\|_X}$ is the closure of $Y$ in $X$.
\item For a Banach space $V$ over $\R$, we denote by $V^*$ the space of continuous linear maps from  $V$ into $\R$.
	\item For any $1\leq p\leq \infty$, $p'$ is the H\"older conjugate exponent of $p$ satisfying $p'=p/(p-1)$.
\item For $1\leq p<n$,  $p^*:=np/(n-p)$ is the corresponding Sobolev critical exponent.
\end{itemize}

\subsection{Morrey spaces}\label{sub-morrey}
In this subsection we introduce the local Morrey spaces $\mathcal{M}^{q}_{\loc}(p;\Om)$ in which the potential $V$ belongs to, and briefly discuss some of their properties.
\begin{definition}[Morrey space]\label{def-morrey}
	{\em Let $q \in [1,\infty]$ and $\om \Subset \R^N$ be an open set.  For a measurable, real valued function $f$ defined in $\om$, we set
		$$\|f\|_{\mathcal{M}^q(\om)}=\displaystyle\sup \left\{ m_q(r)  \int_{\om \cap B_r(y)} |f| \dx \bigm| y\in \om, 0<r<\mathrm{diam}(\om) \right\} \,,$$
		where $m_q(r)= r^{-{N}/{q'}}$.
		We write $f \in \mathcal{M}^q_{\text{loc}}(\Om) $  if for any $\om \Subset \Om$,  we have $\|f\|_{\mathcal{M}^q(\om)}<\infty.$}
\end{definition}
\begin{remarks}\label{Rmk:Morrey} \rm
	$(i)$ Note that $\mathcal{M}^1_{\text{loc}}(\Om) = L^1_{\text{loc}}(\Om) $ and $\mathcal{M}^{\infty}_{\text{loc}}(\Om) = L^{\infty}_{\text{loc}}(\Om) $ (as vector spaces), but $ L^{q}_{\text{loc}}(\Om) \subsetneq  \mathcal{M}^{q}_{\text{loc}}(\Om) \subsetneq L^{1}_{\text{loc}}(\Om)$ for any $q \in (1,\infty)$.
	
	$(ii)$ For $f \in \mathcal{M}^q_{\text{loc}}(\Om)$ and $1\leq q<\infty$, it is easily  seen that for $B_\gr(x)  \subset  \Om$ we have $\|f\|_{B_r(x)}\leq \|f\|_{B_\gr(x)}$ for $r \leq \gr.$ 
\end{remarks}
Next we define a special local Morrey space $\mathcal{M}^{q}_{\text{loc}}(p;\Om)$ which depends on the underlying
exponent $1<p<\infty$.
\begin{definition}[Special Morrey space] \rm For $p \neq N$, we define 
	\begin{align*}
	\mathcal{M}^{q}_{\text{\loc}}(p;\Om):=  \begin{cases}
	\mathcal{M}^{q}_{\text{\loc}}(\Om)  \text{ with }  q>\frac{N}{p}  & \text{if} \ p<N,\\
	L^{1}_{\text{\loc}}(\Om) \   \quad & \text{if} \ p>N \,,
	\end{cases}    
	\end{align*}
	while for $p = N$, the Morrey space $\mathcal{M}^{q}_{\loc}(N;\Om)$ consists of all those $f$ such that for some $q > N$ and any $\om \Subset \Om$
	\begin{align*}
	\|f\|_{\mathcal{M}^q(N;\om)}=\displaystyle\sup \left\{m_q(r) \int_{\om \cap B_r(y)} |f| \dx
	\bigm|  y\in \om, r<\mathrm{diam}(\om) \right\} ,    
	\end{align*}
	where $m_q(r)=[\log(\frac{\mathrm{diam}(\om)}{r})]^{{q/N'}}$ for $0<r<\mathrm{diam}(\om)$ \cite[Theorem 1.94, and references therein]{Ziemer}.	For more details on Morrey spaces, see the monograph \cite{Ziemer} and references therein.
\end{definition}
We recall the Morrey-Adams inequality (see \cite[Theorem 4.1]{Ziemer}  and \cite[Theorem~7.4.1]{Pucci}).
\begin{proposition}[Morrey-Adams inequality] \label{Prop:MA} Let $p \in (1,\infty)$, $\om \Subset \R^N$ and $f \in \mathcal{M}^{q}_{\loc}(p;\om)$. Then 
	there exists a constant $C(N,p,q)>0$ such that for any $0<\delta<\gd_0$
	\begin{align} \label{ineq:MA}
	\int_{\om} |f| |\phi|^p \dx \leq \delta \int_{\om}  |\nabla \phi|^p \dx + \frac{C(N,p,q)}{\delta^{\frac{N}{pq-N}}} \|f\|_{\M^{q}(p; \om)}^{\frac{pq}{pq-N}} \int_{\om} |\phi|^p \dx \qquad \forall \phi \in W^{1,p}_0(\om).
	\end{align}
\end{proposition}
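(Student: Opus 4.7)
My plan is to prove a scale-dependent local estimate first, then optimize in the scale parameter to produce the desired absorption form. The key local claim, which I shall denote $(\ast)$, is: for every ball $B_r(y) \subset \R^N$ with $0 < r < \mathrm{diam}(\om)$ and every $\phi \in W^{1,p}_0(\om)$,
$$\int_{\om \cap B_r(y)} |f||\phi|^p \dx \leq C_1 r^{p - N/q} \|f\|_{\M^q(p;\om)} \int_{B_r(y)} |\nabla\phi|^p \dx + C_2 r^{-N/q} \|f\|_{\M^q(p;\om)} \int_{B_r(y)} |\phi|^p \dx,$$
with $C_1, C_2$ depending only on $N, p, q$. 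The scaling here is dictated by the Morrey bound $\int_{B_r(y)}|f|\dx \leq r^{N/q'}\|f\|_{\M^q(p;\om)}$ combined with the Sobolev (or Poincar\'e--Sobolev) inequality on the ball. For $p<N$, I would decompose $\phi = (\phi - \bar{\phi}_{B_r}) + \bar{\phi}_{B_r}$, apply the Sobolev--Poincar\'e inequality $\|\phi - \bar{\phi}_{B_r}\|_{L^{p^*}(B_r)} \leq C_{N,p}\|\nabla\phi\|_{L^p(B_r)}$ (whose constant is $r$-independent by scaling), handle the oscillatory part via an Adams-type trace inequality (observing that $|f|\dx$ satisfies a Frostman--Carleson density condition with exponent $N/q' > N - p$), and bound the mean part by $|\bar{\phi}_{B_r}|^p \int_{B_r}|f|\dx \leq Cr^{-N/q}\|f\|_{\M^q(p;\om)}\|\phi\|_{L^p(B_r)}^p$ via Jensen and the Morrey estimate. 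For $p > N$, the Morrey embedding $W^{1,p} \hookrightarrow L^\infty$ on $B_r$ renders $(\ast)$ elementary; for $p = N$, the logarithmic modification built into $\M^q_\loc(N;\om)$ compensates for the failure of the $L^\infty$-embedding via a Trudinger-type estimate.

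Once $(\ast)$ is established, I would apply a Besicovitch-type covering of $\om$ by balls $\{B_r(y_i)\}$ of common radius $r$ with overlap bounded by $K = K(N)$, and sum $(\ast)$ over this cover to obtain
$$\int_\om |f||\phi|^p \dx \leq \widetilde{C}_1 \|f\|_{\M^q(p;\om)} r^{p-N/q} \int_\om |\nabla\phi|^p \dx + \widetilde{C}_2 \|f\|_{\M^q(p;\om)} r^{-N/q} \int_\om |\phi|^p \dx,$$
with $\widetilde{C}_i = K C_i$. Finally, I would optimize in $r$ by setting $\widetilde{C}_1 \|f\|_{\M^q(p;\om)} r^{p-N/q} = \delta$, which gives $r = \bigl(\delta/(\widetilde{C}_1\|f\|_{\M^q(p;\om)})\bigr)^{q/(pq-N)}$. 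Substituting, the coefficient of $\int_\om |\phi|^p\dx$ becomes $C(N,p,q)\|f\|_{\M^q(p;\om)}^{pq/(pq-N)}\delta^{-N/(pq-N)}$, since $(N/q)/(p-N/q) = N/(pq-N)$ and $1 + N/(pq-N) = pq/(pq-N)$. The requirement $r < \mathrm{diam}(\om)$ translates into the explicit upper bound $\delta_0 > 0$ on the allowed $\delta$.

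I expect the main obstacle to be the proof of $(\ast)$ in the range $p<N$: the Morrey hypothesis provides only an $L^1$-bound on $|f|$ over balls (not an $L^{N/p}$-bound), so a direct H\"older pairing with the Sobolev exponent $p^*$ is unavailable. The standard way to bypass this is the trace inequality of Adams for measures $\mu$ satisfying $\mu(B_\rho(z)) \leq A\rho^\lambda$ with $\lambda > N - p$---here $\mu = |f|\dx$, $A = \|f\|_{\M^q(p;\om)}$, and $\lambda = N/q'$, so $\lambda > N - p$ is equivalent to $q > N/p$---or, equivalently, a Riesz/Hedberg-potential argument starting from $|\phi(x)| \leq C I_1(|\nabla\phi|)(x)$ together with a pointwise Hedberg bound combining the Hardy--Littlewood maximal function and the Morrey estimate. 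Once $(\ast)$ is obtained with the right $r$-scaling, everything that follows is essentially dimensional bookkeeping.
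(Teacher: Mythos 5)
The paper does not actually prove this proposition; it is recalled with citations to Mal\'y--Ziemer and Pucci--Serrin (it is Theorem 2.4 in the Pinchover--Psaradakis reference). Your argument --- a local ball estimate obtained from the Morrey density condition $\mu(B_\rho)\leq A\rho^{N/q'}$ with $N/q'>N-p$ via an Adams/Hedberg trace inequality applied to $\phi-\bar\phi_{B_r}$, followed by a bounded-overlap covering at a common scale $r$ and optimization in $r$, with the constraint $r<\mathrm{diam}(\om)$ accounting for the restriction $\delta<\delta_0$ --- is correct and is essentially the standard proof given in those references.
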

\subsection{Generalized Capacity} \label{subsec-cap} Let ${\bf{u}} \in W^{1,p}_{\loc}(\Om)\cap C(\Om)$ be a positive function. Recall our definition of ${\rm{Cap}}_{\bf{u}}(F,\Om)$, the  $Q_{p,A,V}$-capacity of a compact set 
$F \subset \Gw$ with respect to $({\bf{u}},\Om)$ (see Definition~\ref{Def:Cap}). In this subsection, we briefly discuss some of the  properties of ${\rm{Cap}}_{\bf{u}}(F,\Om)$ (cf. \cite{PinRa} and references therein).
\begin{remarks} \label{Cap_def_eqiv}{\em $(i)$  Let $\Om_1 \subset \Om_2$ be domains in $\R^N$. Then $\cp(\cdot, \Om_2) \leq \cp(\cdot, \Om_1).$ On the other hand, for two compact sets $F_1 \subset F_2$ in $\Om$, we have $\cp(F_1, \Om) \leq \cp(F_2, \Om).$

$(ii)$ Notice that, for any compact set $F \subset \Om$ we have
$${\rm{Cap}}_{\bf{u}}(F,\Om)=\inf \{Q_{p,A,V}(\psi  {\bf{u}}) \mid \psi {\bf{u}} \in W^{1,p}(\Om) \cap C_c(\Om) \ \text{with} \ \psi \geq 1 \ \text{on} \ F\} \,.$$

$(iii)$ One can easily verify that $\phi \in \mathcal{N}_{F,\bf{u}}(\Om)$ implies that $|\phi| \in \mathcal{N}_{F,\bf{u}}(\Om)$ and $Q_{p,A,V}(|\phi|)=Q_{p,A,V}(\phi)$. Hence, in the definition of the $Q_{p,A,V}$-capacity, it is enough to  consider only nonnegative test functions $\phi \in \mathcal{N}_{F,\bf{u}}(\Om)$. Furthermore, for any $\phi \in \mathcal{N}_{F,\bf{u}}(\Om)$,   define $\hat{\phi}=\min\{\phi,{\bf{u}}\}$. Then, $\hat{\phi} \in \mathcal{N}_{F,\bf{u}}(\Om)$ and  $Q_{p,A,V}(\hat{\phi}) \leq Q_{p,A,V}(\phi)$ (cf. the proof of  
\cite[Proposition~4.1]{Bianchini} and Lemma~\ref{lem-smaller_null-seq}). Thus, it is easy to see that
$${\rm{Cap}}_{\bf{u}}(F,\Om)=\inf \{Q_{p,A,V}(\phi) \mid   \phi \in \widetilde{\mathcal{N}}_{F,\bf{u}}(\Om)\} \,,$$
where $\widetilde{\mathcal{N}}_{F,\bf{u}}(\Om)=\{\phi \in W^{1,p}(\Om)\cap C_c(\Om) \mid  \phi = {\bf{u}} \ \text{on} \ F, 0 \leq \phi \leq {\bf{u}} \ \text{in} \ \Om\}.$

$(iv)$ Choosing ${\bf{u}} = 1$, we see that our definition of $Q_{p,A,V}$-capacity coincides with the classical definitions. For instance, see \cite{HKM, Mazya2}  for the case $V= 0$, $A = I_{N \times N}$, \cite{Pin_Tin} for $V \neq 0$, $A = I_{N \times N}$, and \cite{PinRa} for $V \neq 0$, 
$A \in L^{\infty}_{\loc}(\Om;\R^{N\times N})$.

$(v)$ The local uniform ellipticity of $A \in L^{\infty}_{\loc}(\Om;\R^{N\times N})$ ensures that there exists a positive measurable function $\theta: \Om \to (0,\infty)$ such that
\begin{align*} 
 \frac{1}{\theta(x)} |\xi|_{I} \leq |\xi|_{A} \leq \theta(x) |\xi|_{I}  \qquad \forall x \in \Om, \xi \in \R^N ,
 \end{align*} 
and  $\theta$ is called  a {\em local uniform ellipticity function}. Suppose that the local uniform ellipticity function  $\theta \in L^{\infty}(\Om)$, then  $1\leq \theta(x) \leq \|\theta\|_{L^\infty(\Gw)}$ in $\Gw$. Therefore, $A\in L^\infty(\Gw,\R^{N\times N})$, and $A$ is uniformly elliptic in $\Gw$.  
Furthermore, if  $p \in (1,N)$,  then the Gagliardo-Nirenberg-Sobolev inequality implies that for any compact $F \Subset \Om$ and any $\phi \in \mathcal{N}_{F,{\bf{1}}}(F,\Om)$
\begin{align*}
    |F|^{\frac{p}{p^*}} \leq \left[\int_{\Om} |\phi|^{p^*} \dx\right]^{{p}/{p^*}} \leq C \int_{\Om} |\nabla \phi|_I^p\dx \leq C \int_{\Om} |\theta(x)|^p |\nabla \phi|_A^p \dx \leq C \|\theta \|_{L^{\infty}(\Om)}^p \int_{\Om}  |\nabla \phi|_A^p \dx
\end{align*}
 for some $C>0$. Thus, if  $V=0$, $p \in (1,N)$, and $\theta \in L^{\infty}(\Om)$, then there exists a constant $C>0$ such that
$$|F|^{\frac{p}{p^*}} \leq C {\rm{Cap}}_{{\bf{1}}}(F,\Om) \qquad  \forall F \Subset \Om.$$

$(vi)$ Let $F\Subset \Gw$ be a compact set. We claim that $\cp(F,\Om)=0$ if and only if ${\rm{Cap}}_{{\bf{1}}}(F,\Om)=0$. Assume first that ${\rm{Cap}}_{{\bf{1}}}(F,\Om)=0$. Consider ${\bf{u}}_F={{\bf{u}}}/{\|{\bf{u}}|_F\|_{{\infty}}}$, where ${\bf{u}}|_F$ is the restriction of ${\bf{u}}$ on $F.$ Then, it is clear that ${\rm{Cap}}_{{\bf{u}}_F}(F,\Om)\leq {\rm{Cap}}_{{\bf{1}}}(F,\Om).$ This implies that ${\rm{Cap}}_{{\bf{u}}_F}(F,\Om)=0$. Since ${\rm{Cap}}_{{\bf{u}}_F}(F,\Om)=\|{\bf{u}}|_F\|_{{\infty}}^p \cp(F,\Om)$, it follows that $\cp(F,\Om)=0$. Conversely, suppose that $\cp(F,\Om)=0$, and consider $\widetilde{{\bf{u}}}_F= {{\bf{u}}}/{\inf_{F}{\bf{u}}}$. Then, following a similar argument as above, we conclude that ${\rm{Cap}}_{{\bf{1}}}(F,\Om)=0$.
}
\end{remarks}
\subsection{Simplified Energy Functional}
Recall our assumption that $Q_{p,A,V}\geq 0$ on $W^{1,p}(\Om) \cap C_c(\Om)$, and notice that a priori, the corresponding Lagrangian might take negative values. Fortunately, due to a Picone identity \cite{PinRa} see also \eqref{eq-picone}), this Lagrangian is equal to a nonnegative Lagrangian (depending on a positive solution $\bf{u}$ of \eqref{PDE}), but  the latter Lagrangian contains indefinite terms. Fortunately, $Q_{p,A,V}$ admits an equivalent functional  $E_{{\bf{u}}}$ depending on $\bf{u}$ which, roughly speaking, is easier to handle since it contains only nonnegative terms.  
\begin{definition}[\cite{PinRa,PTT}] \label{Def:SEF}
 {\em Let $p\in (1,\infty)$ and ${\bf{u}} \in W^{1,p}_{\loc}(\Om)\cap C(\Om)$ be a positive solution of \eqref{PDE}. Then the {\em simplified energy functional} $E_{{\bf{u}}}: W^{1,p}(\Om)\cap C_c(\Om) \to \R$ of the functional $Q_{p,A,V}$ in $\Gw$ is defined as
 $$E_{{\bf u}}(\phi):=\int_{\Om} {\bf u}^2 |\nabla \phi|_A^2(\phi|\nabla {\bf u}|_A +{\bf u} |\nabla \phi|_A)^{p-2}\dx \qquad  \phi \in W^{1,p}(\Om)\cap C_c(\Om).$$ 
}
\end{definition}
\begin{proposition}[{\cite[Lemma~2.2]{PTT} and \cite[Lemma 3.4]{PinRa}}] \label{Prop:simp_energy}
Let $p\in (1,\infty)$ and ${\bf{u}} \in W^{1,p}_{\loc}(\Om)\cap C(\Om)$ be a positive solution of \eqref{PDE}. Then  there exists $C_1,C_2>0$ such that
$$C_1 Q_{p,A,V}({\bf u}\phi) \leq  E_{{\bf u}}(\phi) \leq C_2 Q_{p,A,V}({\bf u}\phi) \qquad \forall {\bf u}\phi \in W^{1,p}(\Om)\cap C_c(\Om).$$
\end{proposition}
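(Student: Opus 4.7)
The plan is to pass from the formula for $E_{\bf u}$ to a two-sided pointwise comparison between the integrands of $E_{\bf u}(\phi)$ and $Q_{p,A,V}({\bf u}\phi)$, after first rewriting $Q_{p,A,V}({\bf u}\phi)$ as an integral of a manifestly nonnegative Lagrangian by means of the Picone identity associated with the positive solution $\bf u$.

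First I would invoke the Picone identity (eq-picone) from \cite{PinRa}. For the positive solution $\bf u$ of \eqref{PDE} and, without loss of generality, $\phi\ge 0$ with ${\bf u}\phi\in W^{1,p}(\Om)\cap C_c(\Om)$, set $X:=\phi\nabla {\bf u}$ and $Y:={\bf u}\nabla\phi$. The Picone identity gives the pointwise nonnegative integrand
\begin{equation*}
R_{\bf u}(\phi):=|X+Y|_A^{p}-|X|_A^{p}-p|X|_A^{p-2}\braket{AX,Y}\ge 0,
\end{equation*}
and testing $Q'_{p,A,V}[{\bf u}]=0$ against the admissible function ${\bf u}\phi^{p}$ (integration by parts is legitimate because ${\bf u}\phi$ is compactly supported and $\bf u$ is continuous and positive) converts the cross term into the potential contribution, yielding
\begin{equation*}
Q_{p,A,V}({\bf u}\phi)=\int_{\Om} R_{\bf u}(\phi)\dx.
\end{equation*}
(The restriction $\phi\ge 0$ is harmless since both $Q_{p,A,V}({\bf u}\phi)$ and $E_{\bf u}(\phi)$ depend only on $|\phi|$ and $|\nabla\phi|_A$.)

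Next I would compare $R_{\bf u}(\phi)$ with the integrand of $E_{\bf u}(\phi)$ by means of the classical pointwise inequality, valid for every $p>1$ and all vectors $X,Y$ in an inner product space:
\begin{equation*}
c_1(p)\,|Y|^{2}\bigl(|X|+|Y|\bigr)^{p-2}\ \le\ |X+Y|^{p}-|X|^{p}-p|X|^{p-2}X\cdot Y\ \le\ c_2(p)\,|Y|^{2}\bigl(|X|+|Y|\bigr)^{p-2},
\end{equation*}
with positive constants $c_1(p),c_2(p)$ depending only on $p$. Applied in the $A$-inner product with our choice of $X,Y$ one has $|X|_A=\phi|\nabla {\bf u}|_A$ and $|Y|_A={\bf u}|\nabla\phi|_A$, so the middle term is precisely $R_{\bf u}(\phi)$ and the outer ones coincide, up to the constants $c_j(p)$, with the integrand of $E_{\bf u}(\phi)$. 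Integrating over $\Om$ and using the identity from the first step produces $C_1 Q_{p,A,V}({\bf u}\phi)\le E_{\bf u}(\phi)\le C_2 Q_{p,A,V}({\bf u}\phi)$ with $C_1=c_1(p)$ and $C_2=c_2(p)$.

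The delicate point is the pointwise double inequality in the subquadratic range $1<p<2$, where the second derivative of $t\mapsto|X+tY|^{p}$ degenerates as $X+tY$ approaches the origin. The standard route is to reduce to $|X|=1$ by homogeneity, split into the cases $|Y|\ll 1$ and $|Y|\gtrsim 1$, and invoke a quantitative strict-convexity estimate for $v\mapsto|v|^{p}$; the superquadratic range $p\ge 2$ follows from a direct Taylor expansion with integral remainder. Both pointwise bounds are classical and are precisely what is established in \cite[Lemma~2.2]{PTT} and \cite[Lemma~3.4]{PinRa}, so once the Picone manipulation has been performed the proposition reduces to quoting them.
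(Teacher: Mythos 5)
Your argument is correct and is essentially the proof of the cited sources: the paper itself gives no proof of Proposition~\ref{Prop:simp_energy} but quotes \cite[Lemma~2.2]{PTT} and \cite[Lemma~3.4]{PinRa}, whose argument is exactly your two steps --- use Picone/testing $Q'_{p,A,V}[{\bf u}]=0$ with ${\bf u}\phi^p$ to write $Q_{p,A,V}({\bf u}\phi)=\int_\Om\big(|X+Y|_A^p-|X|_A^p-p|X|_A^{p-2}\braket{AX,Y}\big)\dx$ with $X=\phi\nabla{\bf u}$, $Y={\bf u}\nabla\phi$, and then apply the two-sided pointwise inequality comparing this quantity with $|Y|_A^2(|X|_A+|Y|_A)^{p-2}$. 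No discrepancy with the paper's treatment.
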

\begin{remark} {\rm If ${\bf{u}} \in W^{1,p}_{\loc}(\Om)\cap C(\Om)$ is a positive solution of \eqref{PDE}, one may equivalently define a $Q_{p,A,V}$-capacity using the simplified energy functional.}
\end{remark}
\subsection{Banach function space}
\begin{definition}\label{BFC}
   {\em A normed linear space $(X(\Om),\norm{.}_{X(\Om)})$ (the value
   	$\norm{f}_{X(\Om)} =\infty$ is admitted) of measurable functions on $\Gw$ is called a {\em Banach function space} if the following conditions are satisfied:
  \begin{enumerate}
   \item $\norm{f}_{X(\Om)} = \|  |f|  \|_{X(\Om)}$, for all $f \in X(\Om)$,
    \item if $ (f_n)$ is a nonnegative sequence of function in $X(\Om)$, increasing to $f$, then $\norm{f_n}_{X(\Om)}$ increases to $ \norm{f}_{X(\Om)}$.
  \end{enumerate} }
  \end{definition}
   The norm $\norm{.}_{X(\Om)}$ is called a {\em Banach function space norm} on $X(\Om)$ \cite[Section 30, Chapter 6]{Zaanen}. Indeed, the restriction of a Banach function space to  $\{f\in X(\Om) \mid  \norm{f}_{X(\Om)}<\infty \}$ is complete \cite[Theorem 2, Section 30, Chapter 6]{Zaanen}.
\section{Characterization of Hardy-weights}
The present section is devoted mainly to the proof of Theorem~\ref{Thm:Char}. As applications of Theorem~\ref{Thm:Char}, we derive a necessary integral condition as in \cite[Theorem 3.1]{KP} for $|g|$ being a Hardy-weight, and show that certain weighted Lebesgue spaces are embedded into $\H_p(\Om,V)$.
\begin{proof}[Proof of Theorem~\ref{Thm:Char}]
{\bf{Necessity:}} Let $g \in L^1_{\loc}(\Om)$ be such that the  Hardy-type inequality \eqref{WH} holds. Let  $F \subset \Om$ be a compact set. Then by \eqref{WH} we have for any measurable function   $\psi$ such that $\psi {\bf{u}} \in W^{1,p}(\Om) \cap C_c(\Om)$ with $\psi \geq 1$ on $F$
\begin{align*}
 \int_{F} |g| | {\bf{u}}|^p \dx \leq \int_{\Om} |g| |\psi {\bf{u}}|^p \dx \leq   \mathcal{B}_g (\Om,V) Q_{p,A,V}(\psi {\bf{u}}) \,.
\end{align*}
By taking infimum over all $\psi {\bf{u}}\in W^{1,p}(\Om) \cap C_c(\Om)$ with $\psi \geq 1$ on $F$, and using Remark \ref{Cap_def_eqiv}-$(ii)$ we obtain
\begin{align*} 
   \int_F |g| |{\bf{u}}|^p \dx \leq \mathcal{B}_g (\Om,V)  {\rm{Cap}}_{\bf{u}}(F,\Om) \qquad \mbox{for all compact sets  $F$ in $\Om$}.
\end{align*}
Hence, $\|g\|_{\H_p(\Om,V)} \leq \mathcal{B}_g (\Om,V).$

{\bf{Sufficiency:}} Assume that $\|g\|_{\H_p(\Om,V)}<\infty$, i.e., 
\begin{align} \label{Mazya_Cond}
   \int_F |g| |{\bf{u}}|^p \dx \leq \|g\|_{\H_p(\Om,V)} {\rm{Cap}}_{\bf{u}}(F,\Om) \qquad \mbox{for all compact sets  $F$ in $\Om$}.
\end{align}
  Consider the measure $\mu = |g| |{\bf{u}}|^p \dx.$ Then for any nonnegative $\psi \in W^{1,p}(\Om)\cap C_c(\Om)$ (see for example \cite[Theorem~1.9]{Ziemer}),
\begin{align} \label{first_est}
\int_{\Om} \! &|g| |\psi {\bf{u}}|^p  \dx \! = p \int_0^{\infty} \mu(\{\psi \geq t\}) t^{p-1}  \dt = p \sum_{j=-\infty}^{\infty}\int_{2^j}^{2^{j+1}} \mu(\{\psi \geq t\}) t^{p-1} \dt \nonumber \\
& \leq \!(2^p-1) \!\! \sum_{j=-\infty}^{\infty} \!\!\! 2^{pj} \mu(\{\psi \!\geq \! 2^j\}) 
 \leq \|g\|_{\H_p(\Om,V)} (2^p-1) \!\!\sum_{j=-\infty}^{\infty} \!\!\! 2^{pj}  {\rm{Cap}}_{\bf{u}}(\{\psi \! \geq \! 2^j\},\Om) \,.
\end{align}
\noi {\bf{Case 1.}} Let $p\geq 2$.
Consider the function $\psi_j \in W^{1,p}(\Om)\cap C_c(\Om)$
\begin{eqnarray*} 
\psi_j(x):=
\begin{cases}
\displaystyle 0 \ \ \ \ \ \ \ \ \  \ \ \ \   \mbox{if} \ \psi \leq 2^{j-1} \,, \\
 \frac{\psi}{2^{j-1}}-1  \,\,\, \ \ \mbox{if }  2^{j-1} \le  \psi \leq 2^j \,, \\
1 \,\,\,\,\,\,\,\,\,\,\,\,\,\ \ \ \ \ \  \mbox{if \ } 2^j \leq \psi
  \,.
\end{cases}
\end{eqnarray*}
Then
$ {\rm{Cap}}_{\bf{u}}(\{\psi \geq 2^j\},\Om) \leq Q_{p,A,V}(\psi_j {\bf{u}})$.
 Let $A_j \! :=\! \{x \!\in \! \Gw \mid 2^{j-1} \!\leq\!  \psi(x) \!\leq \!2^j\}$. The simplified energy  (Proposition~\ref{Prop:simp_energy}) implies 
\begin{align} \label{case1} 
& {\rm{Cap}}_{\bf{u}}(\{\psi \geq 2^j\},\Om) \leq  Q_{p,A,V}(\psi_j {\bf{u}}) 
 \leq C\int_{\Om} {\bf{u}}^2 |\nabla \psi_j|_A^2 \left[\psi_j |\nabla {\bf{u}}|_A + {\bf{u}} |\nabla \psi_j|_A\right]^{p-2}  \!\dx \nonumber \\  
& \!= \!C \!\!\int_{A_j}\!\!\!\! {\bf{u}}^2 \frac{|\nabla \psi|_A^2}{2^{2(j-1)}} \!\left[\! \left(\frac{\psi}{2^{j-1}} \!- 1\!\right)\! |\nabla {\bf{u}}|_A \!+\!  {\bf{u}}  \frac{|\nabla \psi|_A}{2^{j-1}} \!\right]^{p-2} \! \!\!\!\dx    
 \!\leq\! C\!\!\int_{A_j}\!\!\!\!{\bf{u}}^2 \frac{|\nabla \psi|_A^2}{2^{2(j-1)}}\! \left[\! \frac{\psi}{2^{j-1}} |\nabla {\bf{u}}|_A 
 \!+\! {\bf{u}}  \frac{|\nabla \psi|_A}{2^{j-1}} \!\right]^{p-2} \!\! \!\!\dx \nonumber \\
&=\left(\frac{C}{2^{p(j-1)}} \right) \int_{A_j} \left[{\bf{u}}^2 |\nabla \psi|_A^2 \right] \left[ \psi|\nabla {\bf{u}}|_A + {\bf{u}}  |\nabla \psi|_A \right]^{p-2}   \dx . 
\end{align}  
\noi {\bf{Case 2.}} Let $p< 2.$
Consider the function
\begin{eqnarray*} \label{crufn}
\widetilde{\psi}_j(x):=
\begin{cases}
\displaystyle 0 \ \ \ \ \ \ \ \ \  \ \ \ \ \ \ \ \  \  \mbox{if} \ \psi \leq 2^{j-1} \,, \\
 \left[\frac{\psi}{2^{j-1}}-1\right]^{{2}/{p}}  \,\, \ \ \mbox{if }  2^{j-1} \le  \psi \leq 2^j \,, \\
1 \,\,\,\,\,\,\,\,\,\,\,\,\,\ \ \ \ \ \ \ \ \ \ \ \ \mbox{if \ } 2^j \leq \psi.
\end{cases}
\end{eqnarray*}
Then $\widetilde{\psi}_j \in W^{1,p}(\Om)\cap C_c(\Om)$ and
$ {\rm{Cap}}_{\bf{u}}(\{\psi \geq 2^j\},\Om) \leq   Q_{p,A,V}(\widetilde{\psi}_j {\bf{u}})$, and similarly we obtain
\begin{align} \label{case2}
& \, {\rm{Cap}}_{\bf{u}}(\{\psi \geq 2^j\},\Om) \leq  Q_{p,A,V}(\widetilde{\psi}_j {\bf{u}}) 
  \leq C\int_{\Om} {\bf{u}}^2 |\nabla \widetilde{\psi}_j|_A^2 \left[\widetilde{\psi}_j |\nabla {\bf{u}}|_A + {\bf{u}} |\nabla \widetilde{\psi}_j|_A\right]^{p-2} \!\! \dx \nonumber \\  
& \!= \!\!\left(\!\frac{4C}{p^2} \!\right) \!\! \int_{A_j}\! \!\!\!{\bf{u}}^2 \frac{\left[\frac{\psi}{2^{j-1}} \! - \! 1\right]^{\frac{2(2-p)}{p}}\!
	|\nabla \psi|_A^2}{2^{2(j-1)}}\!\left[ \!\left[\frac{\psi}{2^{j-1}} \!-\!1\right]^{\frac{2}{p}} \!\!\!|\nabla {\bf{u}}|_A \!+\! {\bf{u}}  \frac{\left[\frac{\psi}{2^{j-1}}-1\right]^{\frac{(2-p)}{p}}|\nabla \psi|_A}{2^{j-1}} \!\right]^{p-2}  \!\!\! \!\dx  \nonumber \\  
& \!\leq\! \left(\!\frac{4C}{p^2}\! \right)\!\!  \int_{A_j} \!\!\!\!{\bf{u}}^2 \frac{\left[\frac{\psi}{2^{j-1}}\!- \! 1\right]^{\frac{2(2-p)}{p}}|\nabla \psi|_A^2}{2^{2(j-1)}} \!\! \left[ \!\left[\frac{\psi}{2^{j-1}} \!- \!1\right]^{\frac{2}{p}} |\nabla {\bf{u}}|_A + {\bf{u}}  \frac{\left[\frac{\psi}{2^{j-1}}-1\right]^{\frac{2}{p}}|\nabla \psi|_A}{2^{j-1}} \right]^{p-2}  \!\!\! \dx  \nonumber \\
& \!= \! \left(\!\frac{4C}{p^2} \!\right)\!\!  \int_{A_j}\! \!\! {\bf{u}}^2 \frac{|\nabla \psi|_A^2}{2^{2(j-1)}} \!  \left[  |\nabla {\bf{u}}|_A \! +\!  {\bf{u}}  \frac{|\nabla \psi|_A}{2^{j-1}} \right]^{p-2} \!\!\! \!  \dx 
 \!\leq  \!  \left(\! \frac{4C}{p^2} \! \right) \!\!  \int_{A_j}\! \!\! \!{\bf{u}}^2 \frac{|\nabla \psi|_A^2}{2^{2(j-1)}} \!\left[  \frac{\psi}{2^{j}}|\nabla {\bf{u}}|_A \! +\!  {\bf{u}}  \frac{|\nabla \psi|_A}{2^{j}} \right]^{p-2} \!\!  \! \! \dx   \nonumber \\
& = \left(\frac{16C}{2^{pj}p^2} \right) \int_{A_j} {\bf{u}}^2 |\nabla \psi|_A^2  \left[ \psi|\nabla {\bf{u}}|_A + {\bf{u}}  |\nabla \psi|_A \right]^{p-2}  \dx. 
\end{align} 
Subsequently, using \eqref{case1} (for $p \geq 2$) and \eqref{case2} (for $p<2$), we obtain from \eqref{first_est}  that
\begin{align} \label{second_est1}
&\int_{\Om} |g| |\psi {\bf{u}}|^p  \dx  \leq C_2\|g\|_{\H_p(\Om,V)} \sum_{j=-\infty}^{\infty} 2^{pj} \left(\frac{1}{2^{pj}} \right) 
\int_{A_j}{\bf{u}}^2 |\nabla \psi|_A^2 \ \left[ \psi|\nabla {\bf{u}}|_A + {\bf{u}}  |\nabla \psi|_A \right]^{p-2}  \dx \nonumber \\
&= C_2\|g\|_{\H_p(\Om,V)}\sum_{j=-\infty}^{\infty}  \int_{A_j} {\bf{u}}^2 |\nabla \psi|_A^2 \left[ \psi |\nabla {\bf{u}}|_A + {\bf{u}}|\nabla \psi|_A \right]^{p-2}   \dx \nonumber \\
& = C_2\|g\|_{\H_p(\Om,V)} \int_{\Om }  \left[{\bf{u}}^2 |\nabla \psi|_A^2\right] \left[ \psi |\nabla {\bf{u}}|_A + {\bf{u}}|\nabla \psi|_A \right]^{p-2}\!\! \dx 
\leq C_H\|g\|_{\H_p(\Om,V)} Q_{p,A,V}({\bf{u}} \psi) \,.
\end{align}
Therefore, for any nonnegative $\phi \in W^{1,p}(\Om)\cap C_c(\Om)$, by taking $\psi = {\phi}/{{\bf{u}}}$ in \eqref{second_est1}, we get
$$\int_{\Om} |g| |\phi|^p \ \dx \leq C_H\|g\|_{\H_p(\Om,V)} Q_{p,A,V}(\phi) \,.$$
Since $|\phi| \in W^{1,p}(\Om)\cap C_c(\Om)$ for $\phi \in W^{1,p}(\Om)\cap C_c(\Om)$, our claim follows.

Furthermore,  $\|g\|_{\mathcal{B}(\Om,V)} :=\mathcal{B}_g(\Om,V)$ also defines a norm on $\H_p(\Om)$.   From the above proof, it follows that this norm is equivalent to the norm $\| \cdot \|_{\H_p(\Om,V)}$.
Therefore, up to the equivalence relation of norms, the norm $\norm{\cdot}_{\H_p(\Om,V)}$ is independent of the positive solution ${\bf{u}}$.
\end{proof}
\begin{remarks} \label{Rmk:Prop_cap} {\em 
  $(i)$ Let $\Gw$ be a domain in $\R^N$. Theorem \ref{Thm:Char} implies that $\cp(F,\Om) = 0$ for any compact set $F \Subset \Om$ if and only if  $Q_{p,A,V}$ is critical in $\Gw$.
  In fact, by \cite[Theorem 6.8]{PinRa},  $Q_{p,A,V}$ is critical in $\Gw$ if and only if ${\rm{Cap}}_{{\bf{1}}}(F,\Om)=0$ for any compact set $F \Subset \Om$, and 
 by Remark \ref{Cap_def_eqiv}-$(vi)$, for any compact set $F \Subset \Om$, ${\rm{Cap}}_{{\bf{1}}}(F,\Om)=0$ if and only if $\cp(F,\Om)=0$. 

$(ii)$ In Theorem \ref{Thm:Char}, if we take $A=I_{N \times N}$, $V=0$, and ${\bf{u}} = 1$, we obtain Maz'ya's characterization of Hardy-weights for the $p$-Laplacian \cite[Theorem 8.5]{Mazya2}.

$(iii)$ Frequently, it is easier to compute or estimate $\| \cdot \|_{\mathcal{B}(\Om,V)}$ than $\| \cdot \|_{\H_p(\Om,V)}$.}
\end{remarks}
\subsection{Necessary condition for being a Hardy-weight}\label{KP-condition}
 In the recent paper \cite{KP}, the authors proved  a necessary condition for $|g|$ to be a Hardy-weight. Here we show that this condition can be derived directly from Theorem~\ref{Thm:Char}. First we recall the definition of positive solution of minimal growth at infinity.
\begin{definition}
	\emph{Let $K_{0}$ be a compact subset in  $\Omega$. A positive solution~$u$ of~$Q'_{p,A,V}[w]=0$ in~$\Omega\setminus K_{0}$, is called a \emph{positive solution of minimal growth in a neighborhood of infinity} in $\Omega$ if for any smooth compact subset~$K$ of~$\Omega$ with~$K_{0}\Subset \mathring{K}$, any positive supersolution~$v\in C\big(\Omega\setminus \mathring{K}\big)$ of ~$Q'_{p,\mathcal{A},V}[w]=0$ in~$\Omega\setminus K$ such that~$u\leq v$ on~$\partial K$, satisfies $u\leq v$ in~$\Omega\setminus K$. }
\end{definition} 
\begin{remark}\label{rem_min_gr} {\em 
If $Q_{p,A,V}$ is subcritical (critical) in a domain $\Gw$, then respectively, its minimal  positive Green function (ground state) is a positive solution of the equation $Q'_{p,A,V}[w]=0$ of minimal growth in a neighborhood of infinity in $\Omega$ \cite{Yehuda_Georgios}. For the definition and properties of a ground state see Definition~\ref{def-gs}, Remark~\ref{rem-gs} and \cite{Yehuda_Georgios}.}
	\end{remark}
Now we derive the necessary integral condition of \cite{KP} in the following proposition.
 \begin{proposition}\label{Thm:KP}\cite[Theorem 3.1]{KP} 
Let $\Gw$ be a domain and $K\Subset \Gw\subset \R^N$ with $\mathring{K} \!\neq \! \emptyset$.  Let ${\bf{u}} \in W^{1,p}_{\loc}(\Om)\cap C(\Om)$ be a positive solution  of $Q'_{p,A,V}[u]=0$ in $\Om \setminus K$ of minimal growth at a neighborhood of infinity in $\Gw$. If $g \in \mathcal{H}_p(\Om,V)$, then  $ \int_{\Om \setminus K_1}  |g| {\bf{u}}^p \dx< \infty$, where $K\Subset \mathring{K_1} \Subset \Gw$.
\end{proposition}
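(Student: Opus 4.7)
The plan is to approximate ${\bf u}$ from below by positive solutions of \eqref{PDE} on bounded subdomains, apply the Hardy inequality to suitable compactly supported truncations, and conclude by monotone convergence. Fix a smooth exhaustion $\{\Gw_n\}$ of $\Gw$ with $K_1 \Subset \Gw_n \Subset \Gw_{n+1}$. Using the Perron-type construction for quasilinear elliptic equations (cf.~\cite{Yehuda_Georgios}), I would take $u_n \in W^{1,p}(\Gw_n \setminus K) \cap C(\overline{\Gw_n \setminus K})$ solving $Q'_{p,A,V}[u_n]=0$ in $\Gw_n \setminus K$ with $u_n = {\bf u}$ on $\partial K$ and $u_n = 0$ on $\partial \Gw_n$. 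By comparison, $u_n \le u_{n+1} \le {\bf u}$ on $\Gw_n \setminus K$, and the minimal growth property of ${\bf u}$ forces $u_n \nearrow {\bf u}$ pointwise on $\Gw \setminus K$. I would then fix a cutoff $\xi \in C^\infty(\Gw)$ with $0\leq\xi\leq 1$, $\xi \equiv 0$ on an open set $U$ with $K\Subset U$ and $\overline U \Subset \mathring{K_1}$, and $\xi \equiv 1$ on $\Gw\setminus K_1$; after extending $u_n$ by zero outside $\Gw_n$, the product $\xi u_n$ belongs to $W^{1,p}(\Gw) \cap C_c(\Gw)$.

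Applying the Hardy inequality \eqref{WH} to $\xi u_n$, I obtain
\begin{equation*}
\int_\Gw |g|\,(\xi u_n)^p\,\dx \;\leq\; \mathcal{B}_g(\Gw,V)\, Q_{p,A,V}(\xi u_n).
\end{equation*}
Since $u_n$ is a positive solution of \eqref{PDE} in the open set $\Gw_n \setminus \overline K$, and the support of $\xi u_n$ lies in this set, Proposition~\ref{Prop:simp_energy} applied on the subdomain $\Gw_n \setminus K$ yields $Q_{p,A,V}(\xi u_n) \leq C_2\, E_{u_n}(\xi)$. The gradient $\nabla \xi$ is supported in the fixed compact set $\overline{K_1}\setminus U \Subset \Gw\setminus K$, and on this set $u_n \leq {\bf u}$ is uniformly bounded in $C \cap W^{1,p}$ by standard elliptic regularity for $Q'_{p,A,V}[u]=0$ (leveraging Proposition~\ref{Prop:MA} together with Harnack-type estimates). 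Hence $E_{u_n}(\xi) \leq M$ for some constant $M$ independent of $n$, so that $Q_{p,A,V}(\xi u_n) \leq C_2 M$ uniformly in $n$.

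Finally, since $\xi \equiv 1$ on $\Gw\setminus K_1$, the sequence $\xi u_n = u_n$ increases pointwise to ${\bf u}$ on $\Gw\setminus K_1$. Monotone convergence then gives
\begin{equation*}
\int_{\Gw\setminus K_1} |g|\,{\bf u}^p\,\dx
\;=\; \lim_{n\to\infty}\int_{\Gw\setminus K_1} |g|\,u_n^p\,\dx
\;\leq\; \liminf_{n\to\infty} \int_\Gw |g|\,(\xi u_n)^p\,\dx
\;\leq\; C_2 M\,\mathcal{B}_g(\Gw,V) < \infty.
\end{equation*}
The main technical obstacle will be the rigorous construction of $\{u_n\}$ in the presence of a sign-indefinite potential $V$ and the justification of the monotone convergence $u_n \nearrow {\bf u}$ from the definition of minimal growth, together with the adaptation of the simplified-energy equivalence (Proposition~\ref{Prop:simp_energy}) to the subdomains $\Gw_n \setminus K$ rather than to the whole of $\Gw$ (likely via a secondary cutoff and a density argument to bring $\xi u_n$ into the required class $W^{1,p}(\Gw_n\setminus K)\cap C_c(\Gw_n\setminus K)$).
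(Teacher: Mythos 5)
Your strategy is genuinely different from the paper's and is viable in outline, but two of its steps are not justified as written. The paper instead perturbs the potential: by \cite[Proposition~4.19]{Yehuda_Georgios} there is $0\le V_K\in C_c^{\infty}(\Om)$ supported in $K$ such that $Q_{p,A,V-V_K}$ is critical in $\Gw$; taking its ground state $\psi$ and, for each $n$, a function $\phi_n\in\widetilde{\mathcal{N}}_{\Om_n,\psi}(\Om)$ with $Q_{p,A,V-V_K}(\phi_n)<1/n$, one gets
$\int_{\Om_n}|g|\psi^p\dx\le\int_{\Om}|g|\phi_n^p\dx\le C\,Q_{p,A,V}(\phi_n)=C\,Q_{p,A,V-V_K}(\phi_n)+C\int_{\Om}V_K\phi_n^p\dx\le C/n+C\|V_K\|_{L^\infty}\int_K\psi^p\dx$,
and Fatou plus the comparability of positive solutions of minimal growth finishes the proof. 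This sidesteps everything you list as technical obstacles: no Dirichlet problems with sign-indefinite $V$, no simplified energy on subdomains, no uniform regularity estimates for the approximating sequence. Your route is closer to a direct construction of the minimal-growth solution and would work, but it is substantially heavier.

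The two points to repair. First, ``the minimal growth property of ${\bf u}$ forces $u_n\nearrow{\bf u}$'' is not what the definition gives: the limit $v=\lim u_n$ satisfies $v\le{\bf u}$ with $v={\bf u}$ on $\partial K$, and minimal growth of ${\bf u}$ only compares ${\bf u}$ with supersolutions that dominate it on $\partial K'$ for some larger compact $K'$ --- your inequality points the wrong way. You do not get $v={\bf u}$; what you do get, applying the definition to the solution $Cv$ with $C=\max_{\partial K_1}({\bf u}/v)$, is ${\bf u}\le Cv$ in $\Gw\setminus K_1$, and the resulting two-sided bound ${\bf u}\asymp v$ suffices to transfer integrability from $v$ to ${\bf u}$ (the paper needs the same comparability step to pass from $\psi$ to ${\bf u}$). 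Second, Proposition~\ref{Prop:simp_energy} does not apply verbatim to $\xi u_n$ on $\Gw_n\setminus\overline{K}$: the product vanishes on $\partial\Gw_n$ without being compactly supported there, so you must rerun the Picone/test-function argument of \cite{PinRa} with the zero-trace test function $u_n\xi^p\in W^{1,p}_0(\Gw_n\setminus\overline{K})\cap L^{\infty}$; this is legitimate for the Dirichlet solution $u_n$ but is a genuine piece of work, not a density remark. Once that is in place, the uniform bound on $E_{u_n}(\xi)$ is actually cheaper than you suggest: the integrand carries the factor $|\nabla\xi|_A^2$, so for $p<2$ one has $E_{u_n}(\xi)\le\int u_n^p|\nabla\xi|_A^p\dx$, bounded by $\|{\bf u}\|_{L^\infty(\supp\nabla\xi)}$ alone, while for $p\ge2$ a Caccioppoli estimate on $\supp\nabla\xi$ (uniform in $n$ since $0<u_n\le{\bf u}$) controls $\int_{\supp\nabla\xi}|\nabla u_n|^p\dx$; no pointwise gradient bounds are needed, which matters because the standing hypothesis $V\in\M^{q}_{\loc}(p;\Om)$ does not provide them.
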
 
\begin{proof} Let $g \in \mathcal{H}_p(\Om,V)$, and $K\Subset \Om$ such that $\mathring{K} \!\neq \! \emptyset$. By  \cite[Proposition 4.19]{Yehuda_Georgios}, there exists $0\leq V_K \!\in \! C_c^{\infty}(\Om)$ with support in $K$ such that $Q_{p,A,V-V_K}$ is critical  in $\Gw$. Let ${\bf \psi}$ be the ground state of $Q_{p,A,V-V_K}$ in $\Gw$. Thus, by  \cite{Arxiv_Pinchover,PinRa} and Remark~\ref{Rmk:Prop_cap}-$(i)$, 
	$\cpsi(F,\Om) \!=\! 0$ for any compact set $F \Subset \Om$ with nonempty interior. Let $\bigcup_n \Om_n= \Om$ be a compact smooth exhaustion of $\Om$. For each $n \in \N$, there exists $0 \!\leq \! \phi_n \in \widetilde{\mathcal{N}}_{\Om_n,{\bf \psi}}(\Om)$ such that $Q_{p,A,V-V_K}(\phi_n) < {1}/{n}$. Hence,
\begin{align*}
  \int_{\Om_n}  |g| {\bf \psi}^p\dx 
  \leq \int_{\Om_n } |g| \phi_n^p \dx 
  \leq C Q_{p,A,V}(\phi_n) & = C Q_{p,A,V-V_K}(\phi_n) + C \int_{\Om} V_K |\phi_n|^p \dx\\
    & \leq \frac{C}{n} +C \|V_K\|_{L^{\infty}(\Om)} \int_{K}  {\bf \psi}^p \dx
\end{align*}
Using Fatou's lemma, we conclude that $ \int_{\Om}  |g| {\bf \psi}^p \dx< \infty$. Hence, it follows from Remark \ref{rem_min_gr} that  $ \int_{\Om\setminus K_1}  |g| {\bf u}^p \dx< \infty$.
\end{proof}
\subsection{Weighted Lebesgue space embedded in $\mathcal{H}_p(\Om,V)$} In this subsection, we show that there are certain weighted Lebesgue spaces that are continuously embedded in $\mathcal{H}_p(\Om,V)$.
\begin{theorem} \label{Thm:Weighted}
Let $p\geq 2$ and ${\bf{u}} \in W^{1,p}_{\loc}(\Om)\cap C(\Om)$ be a positive solution of $Q_{p,I,V}'[u]=0$ in $\Gw$. For some $\al \in [1,N/p],\beta \in [1,\infty],$ and $r>1$, suppose that there exists $C_r>0$ such that  \begin{align}\label{weighted_sob}
    |A|^{\frac{1}{N}+\frac{1}{\al' p} - \frac{1}{p}} \left[ \frac{1}{|A|} \int_A {\bf{u}}^{{\frac{\al' pr}{\beta'}}} \dx\right]^{\frac{1}{\al' pr}} \left[ \frac{1}{|A|} \int_A {\bf{u}}^{-p'r} \dx\right]^{\frac{1}{ p'r}} & < C_r,   
\end{align}
for any measurable set $A$ in $\Om$. Then, $L^{\al}(\Om,{\bf{u}}^{\frac{\al}{\beta}p}) \subset \mathcal{H}_p(\Om,V)$.
\end{theorem}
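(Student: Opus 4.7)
The plan is to invoke Theorem~\ref{Thm:Char}: the embedding $L^\al(\Om,{\bf u}^{\al p/\be}) \subset \mathcal{H}_p(\Om,V)$ will follow once one establishes
$$\int_\Om |g||\phi|^p \dx \le C \|g\|_{L^\al(\Om,{\bf u}^{\al p/\be})}\, Q_{p,A,V}(\phi),\qquad \phi \in W^{1,p}(\Om)\cap C_c(\Om),$$
with $C$ independent of $g$ and $\phi$. Writing $\phi = {\bf u}\eta$ and applying Proposition~\ref{Prop:simp_energy}, one has $Q_{p,A,V}({\bf u}\eta) \asymp E_{\bf u}(\eta)$, and since $p\ge 2$ the elementary bound $(\eta|\nabla {\bf u}|_A + {\bf u}|\nabla \eta|_A)^{p-2} \ge ({\bf u}|\nabla \eta|_A)^{p-2}$ yields
$$E_{\bf u}(\eta) \ge \int_\Om {\bf u}^p |\nabla \eta|_A^p \dx.$$
Consistently with the hypothesis that ${\bf u}$ solves $Q'_{p,I,V}[u]=0$, I would take $A=I$ (so $|\nabla\eta|_A=|\nabla\eta|$). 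It thus suffices to prove the weighted estimate
$$\int_\Om |g|\,{\bf u}^p\eta^p \dx \le C \|g\|_{L^\al(\Om,{\bf u}^{\al p/\be})} \int_\Om {\bf u}^p |\nabla \eta|^p \dx.$$

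Next, I would dualize by Hölder's inequality with exponents $(\al,\al')$, splitting ${\bf u}^p = {\bf u}^{p/\be}\cdot{\bf u}^{p/\be'}$ (using $1/\be+1/\be' = 1$):
$$\int_\Om |g|\,{\bf u}^p \eta^p\dx \le \|g\|_{L^\al(\Om,{\bf u}^{\al p/\be})}\left(\int_\Om {\bf u}^{p\al'/\be'}\eta^{p\al'}\dx\right)^{1/\al'}.$$
Setting $q:=p\al'$, and noting that $q/\al'=p$, one sees that the whole problem reduces to the two-weight Sobolev-type inequality
$$\left(\int_\Om v\,\eta^q\dx\right)^{1/q} \le C\left(\int_\Om w\,|\nabla\eta|^p\dx\right)^{1/p},\qquad v := {\bf u}^{p\al'/\be'},\ w := {\bf u}^p,$$
for all $\eta$ with ${\bf u}\eta \in W^{1,p}(\Om)\cap C_c(\Om)$. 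Raising this inequality to the $p$-th power and plugging it back into the Hölder step will produce exactly the desired weighted Hardy-type estimate.

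The technical heart of the proof is to derive this weighted Sobolev inequality from the hypothesis \eqref{weighted_sob}. With $\sigma := w^{1-p'} = {\bf u}^{-p'}$, one observes that the exponent $\frac{1}{N}+\frac{1}{\al' p} - \frac{1}{p}$ appearing in \eqref{weighted_sob} is precisely $\frac{1}{N} + \frac{1}{q} - \frac{1}{p}$, the standard Sobolev scaling exponent for the pair $(p,q)$. Consequently, the bound imposed by \eqref{weighted_sob} is exactly the Chanillo--Wheeden / Sawyer--Wheeden $r$-strengthened two-weight testing condition for the pair $(v,w)$, imposed uniformly over all measurable subsets of $\Om$. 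The reverse-Hölder strength $r>1$ supplies the extra integrability needed to pass from this testing condition to the sharp two-weight Sobolev embedding via the standard Muckenhoupt/covering machinery. Once the weighted Sobolev inequality is secured, chaining the estimates above yields the desired Hardy-type inequality and Theorem~\ref{Thm:Char} finishes the proof. The main obstacle is justifying this two-weight Sobolev embedding in the required form; the reductions preceding it are routine manipulations.
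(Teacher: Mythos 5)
Your proposal is correct and follows essentially the same route as the paper: Hölder with exponents $(\al,\al')$ after splitting ${\bf u}^p={\bf u}^{p/\be}{\bf u}^{p/\be'}$, the two-weight Sobolev inequality for the pair $({\bf u}^{\al'p/\be'},{\bf u}^p)$ deduced from \eqref{weighted_sob} via Sawyer--Wheeden (the paper cites \cite[Theorem 1]{Sawyer} for exactly this step), and the lower bound $E_{\bf u}(\eta)\ge\int_\Om {\bf u}^p|\nabla\eta|^p\dx$ from the simplified energy functional using $p\ge 2$. The only difference is the order in which these three estimates are chained, which is immaterial.
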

\begin{proof}  By \eqref{weighted_sob} and \cite[Theorem 1]{Sawyer}, the following weighted Sobolev inequality holds 
\begin{align} 
  \left[\int_{\Om}  {\bf{u}}^{{\frac{\al' p}{\beta'}}} |\phi|^{\al'p} \dx\right]^{\frac{1}{\al' p}} \leq C \left[\int_{\Om}  {\bf{u}}^{ p} |\nabla \phi|^{p} \dx\right]^{\frac{1}{p}} \qquad \forall\phi \in W^{1,p}(\Om) \cap C_c(\Om)
  \label{Swayer1} 
\end{align}
for some $C>0.$
 Now, for any $\phi \in W^{1,p}(\Om) \cap C_c(\Om)$,
using the simplified energy (Proposition \ref{Prop:simp_energy}) and the assumption $p\geq 2$, we obtain
 \begin{align*}
     \int_{\Om} |g| |{\bf{u} }\phi|^p \dx&= \int_{\Om} |g| {\bf{u}}^{\frac{1}{\beta} p + (1-\frac{1}{\beta})p} |\phi|^p\dx  \leq \left[\int_{\Om} |g|^{\al} {\bf{u}}^{ \frac{\al}{\beta}p} \dx\right]^{\frac{1}{\al}} \left[\int_{\Om} {\bf{u}}^{{\frac{\al' p}{\beta'}}} |\phi|^{\al'p} \dx\right]^{\frac{1}{\al'}} \\
     & \leq \left[\int_{\Om} |g|^{\al} {\bf{u}}^{ \frac{\al}{\beta}p} \dx\right]^{\frac{1}{\al}} \left[\int_{\Om}  {\bf{u}}^{ p} |\nabla \phi|^{p}\dx \right] 
     \leq C_1 \left[\int_{\R^N} |g|^{\al} {\bf{u}}^{ \frac{\al}{\beta}p} \dx \right]^{\frac{1}{\al}} Q_{p,I,V}[{\bf{u}} \phi] \,,
 \end{align*}
 for some $C_1>0$. This proves our claim.
\end{proof}
\begin{example}{\em 
$(i)$ Let $2 \leq p <N, V = 0$. Then ${\bf{u}} = 1$ is a positive solution of $-\Gd_p[v]=0$ in $\R^N$. By taking $\al=\beta ={N}/{p}$, the  Gagliardo-Nirenberg inequality implies \eqref{Swayer1}. Hence, $L^{{N}/{p}}(\R^N)\subseteq \mathcal{H}_p(\R^N,{0})$. Consequently, $L^{{N}/{p}}(\R^N)\subset \mathcal{H}_p(\R^N,V)$ for  $2 \leq p <N$, and $V \geq 0$.

$(ii)$ Let $2 \leq p<N$, $V(x)= -[\frac{N-p}{p}]^p |x|^{-p}$, and $\Om=B_1(0)$. Then ${\bf{u}}= |x|^{\frac{p-N}{p}}$ is a positive solution of $Q_{I,p,V}'[u]=0$. For $\al = N/p$, and $\beta =1$, \eqref{Swayer1} is satisfied for any $r>1$. Hence, $L^{N}(B_1(0),|x|^{N(p-N)}) \subset \mathcal{H}_p(B_1(0),V)$.}
\end{example} 
\begin{proposition} \label{Prop:Lebesgue}
Let $V = 0$ and $A \in L^{\infty}(\Om;\R^{N\times N})$. Then for  $p \in (1,N)$ and an open subset $\Om$  in  $\R^N$, $L^{{{N}/{p}}}(\Om)$ is continuously embedded in $\H_p(\Om,{\bf{0}})$.
 \end{proposition}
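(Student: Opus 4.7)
The strategy is to apply the Maz'ya-type characterization in Theorem~\ref{Thm:Char} with the constant solution $\mathbf{u}\equiv 1$ (which is a positive solution of \eqref{PDE} because $V=0$) and then combine H\"older's inequality with the volume--capacity comparison already recorded in Remark~\ref{Cap_def_eqiv}-$(v)$. Thus the embedding will follow by estimating $\int_F|g|\dx$ against ${\rm Cap}_{\mathbf{1}}(F,\Om)$ uniformly in compact $F\Subset\Om$.

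More concretely, since $A\in L^\infty(\Om;\R^{N\times N})$ together with the standing local uniform positive-definiteness provides a local uniform ellipticity function $\theta\in L^\infty(\Om)$ in the sense of Remark~\ref{Cap_def_eqiv}-$(v)$, that remark already yields a constant $C_0>0$ such that
$$|F|^{p/p^*}\leq C_0\,{\rm Cap}_{\mathbf{1}}(F,\Om)\qquad\text{for every compact }F\Subset\Om.$$
For any $g\in L^{N/p}(\Om)$, H\"older's inequality with conjugate exponents $N/p$ and $N/(N-p)$ (noting $(N-p)/N=p/p^*$) gives
$$\int_F|g|\,\mathbf{1}^{p}\dx\leq \|g\|_{L^{N/p}(F)}\,|F|^{(N-p)/N}\leq C_0\,\|g\|_{L^{N/p}(\Om)}\,{\rm Cap}_{\mathbf{1}}(F,\Om).$$
Taking the supremum over compact $F\Subset\Om$ with $\cp(F,\Om)\neq 0$ (with $\mathbf{u}=\mathbf{1}$) shows
$$\|g\|_{\mathcal{H}_p(\Om,\mathbf{0})}\leq C_0\,\|g\|_{L^{N/p}(\Om)}.$$
Theorem~\ref{Thm:Char} then guarantees that the Hardy-type inequality \eqref{WH} holds for $g$ with best constant controlled by $C_H C_0\|g\|_{L^{N/p}(\Om)}$, which is exactly the continuous embedding $L^{N/p}(\Om)\hookrightarrow \mathcal{H}_p(\Om,\mathbf{0})$.

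The main conceptual point is the use of $\mathbf{u}\equiv 1$ as the reference positive solution, which collapses the weighted capacity $\cp$ into the standard $(p,A)$-capacity so that Remark~\ref{Cap_def_eqiv}-$(v)$ is directly applicable. There is no substantive obstacle beyond checking that the uniform ellipticity implicit in the hypothesis on $A$ puts us inside the scope of Remark~\ref{Cap_def_eqiv}-$(v)$; if one instead prefers to bypass capacity altogether, the same conclusion follows from the chain $\int|g||\phi|^p\dx\leq\|g\|_{L^{N/p}}\|\phi\|_{L^{p^*}}^p\lesssim\|g\|_{L^{N/p}}\|\nabla\phi\|_{L^p}^p\lesssim\|g\|_{L^{N/p}}Q_{p,A,0}(\phi)$ via the Sobolev inequality and ellipticity, but the capacity route is more in the spirit of this paper.
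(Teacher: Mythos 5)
Your proof is correct and essentially identical to the paper's: both apply H\"older's inequality with exponents $N/p$ and $N/(N-p)$, use the volume--capacity bound $|F|^{p/p^*}\le C\,{\rm Cap}_{\bf 1}(F,\Om)$ from Remark~\ref{Cap_def_eqiv}-$(v)$ with ${\bf u}\equiv 1$, and conclude via Theorem~\ref{Thm:Char}. The only caveat --- shared with the paper's own statement --- is that Remark~\ref{Cap_def_eqiv}-$(v)$ formally requires the ellipticity function $\theta\in L^{\infty}(\Om)$ (a uniform two-sided bound), which is not literally implied by $A\in L^{\infty}(\Om;\R^{N\times N})$ together with local uniform positive definiteness, but this is clearly how the hypothesis is intended to be read.
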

 \begin{proof}
 For $g \in L^{{{N}/{p}}}(\Om)$, using Remark~\ref{Cap_def_eqiv} ({\em v}), and recalling that $p^*=pN/(N-p)$, we have
 \begin{align*}
 \frac{\int_{F}|g|\dx}{{\rm{Cap}}_{\bf{1}}(F,\Om)} \leq \frac{\left[\int_{F}|g|^{\frac{N}{p}}\dx\right]^{\frac{p}{N}}|F|^{\frac{N-p}{N}}}{{\rm{Cap}}_{\bf{1}}(F,\Om)} \leq C\|g\|_{L^{{N}/{p}}(\Om)}
 \end{align*}
 for any compact $F \Subset \Om$. Thus, $\|g\|_{\H_p(\Om,{\bf{0}})} \leq C \|g\|_{L^{{N}/{p}}(\Om)}$. This proves the proposition.
 \end{proof}
\section{Compactness of the functional $T_g$}
Suppose that $Q_{p,A,V}$ is subcritical with $V \geq 0$, and  $g \in \mathcal{H}_p(\Om,V)$. In other words, $g$ satisfies
\begin{align} \label{Hardy:0}
  \int_{\Om} |g||\phi|^p \dx \leq C \int_{\Om} [|\nabla \phi|_A^p+V|\phi|^p] \dx \qquad \forall \phi \in W^{1,p}(\Om) \cap C_c(\Om) \,, 
\end{align}
for some $C>0$.  Then the best constant in \eqref{Hardy:0} is achieved in $\mathcal{D}_{A,V}^{1,p}(\Om)$ if the functional
$$T_g(\phi)=\int_{\Om} |g||\phi|^p \dx $$
is compact in $\mathcal{D}_{A,V}^{1,p}(\Om)$ (for instance see \cite[Section 6.2]{Anoopthesis}).  We devote this section to discuss the compactness of the map $T_g$.
Let us commence with the following important remarks.
\begin{remarks} \label{Remark:cpct} {\em
$(i)$ Let $\Gw\subset \R^N$ be a domain. Recall that for a nonnegative $g \in L^{1}_{\loc}(\Om)$,  the space $\mathcal{D}_{A,g}^{1,p}(\Om)$ is the  completion of $W^{1,p}(\Om)\cap C_c(\Om)$ with respect to the norm 
$$\norm{\phi}_{{\mathcal{D}_{A,g}^{1,p}}} := \left[\||\nabla \phi|_A\|_{L^p(\Om)}^p + \|\phi\|_{L^p(\Om,g \dx)}^p \right]^{\frac{1}{p}}.$$ 
It is known that $\mathcal{D}_{A}^{1,p}(\Om):=\mathcal{D}_{A,0}^{1,p}(\Om)$ is not a Banach space of functions if and only if  $Q_{p,A,{\bf{0}}}$ is critical. However, if $Q_{p,A,{\bf{0}}}$ is subcritical in $\Gw$, then for $ g\in \mathcal{H}_p(\Om,{\bf{0}}) \setminus \{0\}$, the norm $\norm{\phi}_{{\mathcal{D}_{A,|g|}^{1,p}}}$ is equivalent to $\norm{\phi}_{{\mathcal{D}_{A,0}^{1,p}}}$ for $\phi \in W^{1,p}(\Om)\cap C_c(\Om)$, and $\mathcal{D}_{A,|g|}^{1,p}(\Om)=\mathcal{D}_{A}^{1,p}(\Om)$.
  Thus, the Hardy-type inequality \eqref{Hardy:0} with Hardy-weight $g$ holds for all $\phi \in \mathcal{D}_A^{1,p}(\Om)$ as well. 

$(ii)$  In view of $(i)$, if $ g\in \mathcal{H}_p(\Om,{\bf{0}}) \setminus \{0\}$, then it follows that $\mathcal{D}_{A}^{1,p}(\Om) \hookrightarrow L^p(\Om,|g|\dx)$, and hence, $\mathcal{D}_{A}^{1,p}(\Om)$ is a well defined Banach space of functions. Furthermore,   $\mathcal{D}_{A}^{1,p}(\Om)$ is reflexive and separable. Indeed, consider the space $L^p_{A}(\Om)^{N}$ with the norm $\|f\|_{L^p_A(\Om)}:= \left[\int_{\Om} |f|_A^p \dx \right]^{{1}/{p}}$. Consider the map 
	$$E: \DpA \to L^p_{A}(\Om)^{N} \quad \mbox{ given by } \quad E(\phi):=\left(\frac{\partial \phi}{\partial x_1}, \ldots,\frac{\partial \phi}{\partial x_N}\right).$$ 
	Clearly, $E$ is an isometry from $\DpA$ onto a closed subspace of $L^p_{A}(\Om)^{N}$. Thus, it inherits the reflexivity and separability properties of $L^p_{A}(\Om)^{N}$. Therefore, it is enough to prove that $(L^p_{A}(\Om)^{N},\|f\|_{L^p_A(\Om)})$ is a reflexive Banach space. Recall that the local uniform ellipticity of $A$ ensures that there exists a positive function $\theta: \Om \to \R$ such that
\begin{align*} 
 \frac{1}{\theta(x)} |\xi|_{I} \leq |\xi|_{A} \leq \theta(x) |\xi|_{I} \,, \ \forall x \in \Om, \xi \in \R^N \,.   
\end{align*}  
Consequently, 
$$\|f\|_{L^p_I(\Om,\theta^{-1}\dx)} \leq \|f\|_{L^p_A(\Om)} \leq \|f\|_{L^p_I(\Om,\theta \dx)}.$$
Using standard arguments, one obtains that $(L^p_{I}(\Om)^{N},\|f\|_{L^p_I(\Om,\theta^{k}\dx)})$ is a reflexive Banach space for $ k=1,-1$ and $1<p<\infty$. Hence, $(L^p_{A}(\Om)^{N},\|f\|_{L^p_A(\Om)})$ is a reflexive and separable Banach space.  

$(iii)$ Let $(u_n)$ be a sequence in $W^{1,p}(\Om)\cap C_c(\Om)$ such that $u_n \wra u$  in $\mathcal{D}_{A}^{1,p}(\Om)$. Consider any open smooth bounded set $\widetilde{\Om}$ in $\Om$. It is clear that $u_n \in W^{1,p}(\widetilde{\Om})$ and  $(u_n)$ is bounded in $W^{1,p}(\widetilde{\Om})$ (by Sobolev inequality and the local uniform ellipticity of $A$). Thus, (up to a subsequence) $u_n|_{\widetilde{\Om}} \wra u|_{\widetilde{\Om}}$ in $W^{1,p}(\widetilde{\Om})$. Hence, (up to a subsequence) $u_n|_{\widetilde{\Om}} \ra u|_{\widetilde{\Om}}$ in $L^p(\widetilde{\Om})$ (by Rellich-Kondrachov compactness theorem). }
\end{remarks} 
\begin{remark} \label{Remark_ref} \rm
 Let $V\in \mathcal{M}^{q}_{\text{\loc}}(p;\Om)$, and assume that $Q_{p,A,V}$ is subcritical in $\Gw$. Then,
\begin{align} \label{WHV_}
\int_{\Om} V^-|\phi|^p \dx \leq  \int_{\Om} [|\nabla \phi|_A^p + V^+|\phi|^p]\dx \qquad \forall \phi \in W^{1,p}(\Om) \cap C_c(\Om),
\end{align}
Using a standard completion argument as in Remark \ref{Remark:cpct}-$(i)$, we infer that \eqref{WHV_} holds for all $\phi \in \DpAV$ as well.
Now, it is easy to see that, if $g \in \mathcal{H}_p(\Om,V)$, then
\begin{align} \label{WHV3}
\int_{\Om} |g||\phi|^p \dx \leq \S_g(\Gw) \int_{\Om} [|\nabla \phi|_A^p+V|\phi|^p]\dx \qquad \forall \phi \in \DpAV.
\end{align}
Moreover, using a similar argument as in Remark \ref{Remark:cpct}-$(ii)$, it follows that $\DpAV$ is a reflexive, separable Banach space.
\end{remark}

For $A=I_{N \times N}$ and $V=0$, there are necessary and sufficient conditions on $g \in \mathcal{H}_p(\Om,{\bf{0}})$ for the compactness of $T_g$ on $\mathcal{D}^{1,p}_I(\Om)$, for instance, see \cite[Section 2.4.2]{Mazya_book}, and \cite[Theorem 8]{AU}. In Theorem \ref{Cpct_Char} we prove an analogous characterization for a general matrix $A$ and a nonnegative potential $V$. However, the main aim of this section is to provide a subspace of $\mathcal{H}_p(\Om,V)$ for which $T_g$ is compact on $\DpAV$. To this end, we define
 $$\mathcal{H}_{p,0}(\Om,V):=\overline{\mathcal{H}_{p}(\Om,V) \cap L^\infty_c(\Gw)}^{\|\cdot\|_{\mathcal{H}_{p}(\Om,V)}}  .$$ 
\begin{theorem} \label{Thm:compactness}
	Let $g \in \mathcal{H}_{p,0}(\Om,V^+)$. Then $T_g$ is compact in $\mathcal{D}_{A,V^+}^{1,p}(\Om)$.
\end{theorem}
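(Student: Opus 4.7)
The plan is a two-step approximation: first establish the compactness of $T_g$ when $g$ is a bounded, compactly supported Hardy-weight, then bootstrap to general $g \in \mathcal{H}_{p,0}(\Om,V^+)$ using the norm density defining this space and Theorem~\ref{Thm:Char} as a uniform bound in the Hardy-norm.

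For the base case, fix $g \in \mathcal{H}_p(\Om,V^+)\cap L_c^\infty(\Om)$, set $K := \supp g$, and pick a smooth bounded $\widetilde{\Om}$ with $K \Subset \widetilde{\Om} \Subset \Om$. If $u_n \wra u$ in $\mathcal{D}_{A,V^+}^{1,p}(\Om)$, the norms $\|u_n\|_{\mathcal{D}_{A,V^+}^{1,p}(\Om)}$ are bounded, so by the local uniform ellipticity of $A$ (Remark~\ref{Cap_def_eqiv}-$(v)$) the Euclidean gradients $\|\nabla u_n\|_{L^p(\widetilde{\Om})}$ are bounded; together with a local $L^p$ control coming from the Morrey-Adams structure of $V^+$ (Proposition~\ref{Prop:MA}) and Remark~\ref{Remark_ref}, along the lines of Remark~\ref{Remark:cpct}-$(iii)$, the sequence $(u_n)$ is bounded in $W^{1,p}(\widetilde{\Om})$ and, up to a subsequence, converges weakly there. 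Rellich--Kondrachov then upgrades this to strong $L^p(\widetilde{\Om})$ convergence, and since $g$ is essentially bounded with support inside $\widetilde{\Om}$,
\[
|T_g(u_n) - T_g(u)| \leq \|g\|_{L^\infty(\Om)} \int_{\widetilde{\Om}} \bigl| |u_n|^p - |u|^p \bigr|\dx \to 0.
\]
A standard subsubsequence argument removes the extraction.

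For the approximation step, choose $g_k \in \mathcal{H}_p(\Om,V^+) \cap L_c^\infty(\Om)$ with $\|g_k - g\|_{\mathcal{H}_p(\Om,V^+)} \to 0$. Since $V^+ \geq 0$ gives $Q_{p,A,V^+}(\phi) = \|\phi\|_{\mathcal{D}_{A,V^+}^{1,p}(\Om)}^p$, Theorem~\ref{Thm:Char} applied with potential $V^+$ yields
\[
|T_{g_k}(\phi) - T_g(\phi)| \leq \int_\Om |g_k - g||\phi|^p \dx \leq C_H \|g_k - g\|_{\mathcal{H}_p(\Om,V^+)} \|\phi\|_{\mathcal{D}_{A,V^+}^{1,p}(\Om)}^p,
\]
so $T_{g_k} \to T_g$ uniformly on norm-bounded subsets of $\mathcal{D}_{A,V^+}^{1,p}(\Om)$. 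Then a three-$\varepsilon$ split
\[
|T_g(u_n) - T_g(u)| \leq |T_g(u_n)-T_{g_k}(u_n)| + |T_{g_k}(u_n)-T_{g_k}(u)| + |T_{g_k}(u) - T_g(u)|,
\]
with $k$ chosen large (using the uniform estimate above and boundedness of $\|u_n\|_{\mathcal{D}_{A,V^+}^{1,p}(\Om)}$) and then $n \to \infty$ using the base case, closes the argument.

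The hard step is the base case: to verify that weak convergence in the abstract completion $\mathcal{D}_{A,V^+}^{1,p}(\Om)$ passes to weak $W^{1,p}(\widetilde{\Om})$ convergence of the restrictions. This requires trading the $A$-gradient for the Euclidean one via the local ellipticity function and controlling the local $L^p$ norms of $u_n$ using the Morrey regularity of $V^+$, so that the Rellich--Kondrachov theorem becomes applicable on $\widetilde{\Om}$. Once this local compact embedding is secured, the density and three-$\varepsilon$ passage are routine.
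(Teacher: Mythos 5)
Your proposal is correct and follows essentially the same route as the paper: approximate $g$ in the $\mathcal{H}_p(\Om,V^+)$-norm by bounded compactly supported weights, control the error term uniformly on norm-bounded sets via the Maz'ya-type characterization (Theorem~\ref{Thm:Char}), and handle the truncated weight by transferring boundedness in $\mathcal{D}_{A,V^+}^{1,p}(\Om)$ to boundedness in $W^{1,p}(\widetilde\Om)$ through local uniform ellipticity and then invoking Rellich--Kondrachov. The only difference is organizational (you isolate the base case before the three-$\varepsilon$ split, while the paper interleaves the two estimates in a single split), and your appeal to Morrey--Adams for the local $L^p$ control is an unnecessary detour where the paper simply cites the Sobolev inequality together with local ellipticity (Remark~\ref{Remark:cpct}-$(iii)$); neither point affects correctness.
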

\begin{proof}
Let $g \in \mathcal{H}_{p,0}(\Om,V^+)$, and $(\phi_n)$ be a bounded sequence in $\DpAV$. By reflexivity, up to a subsequence, $\phi_n \wra \phi$ in $\DpAV$. Choose $ \vge>0$ arbitrary. Then there exists $g_{\vge } \in \mathcal{H}_{p}(\Om,V^+) \cap L^\infty_c(\Gw)$ such that  $\|g - g_{\vge }\|_{\H_p(\Om,V^+)} < \vge$. Let $K_{\vge }$ be the support of $g_{\vge }$ and $K_{\vge } \Subset \Om_{\vge } \Subset \Gw$, where $\Om_{\vge }$ is an smooth open, bounded set. Notice that \begin{align} \label{Req1:cpct}
|T_g(\phi_n) - T_g(\phi)| & \leq \int_{\Om} |g - g_{\vge }| ||\phi_n|^p-|\phi|^p| \dx + \left|\int_{\Om} | g_{\vge }| [|\phi_n|^p-|\phi|^p] \dx \right| \nonumber \\
&:= I_1 +I_2 \,.
\end{align}
Using the definition of
$\|\cdot\|_{\H_p(\Om,V^+)}$, the first integral can be estimated as follows:
\begin{align} \label{Req:cpct}
I_1 & \leq C \|g - g_{\vge }\|_{\H_p(\Om,V^+)} \left[\|\phi_n\|^p_{\mathcal{D}_{A,V^+}^{1,p}(\Om)} +\|\phi\|^p_{\mathcal{D}_{A,V^+}^{1,p}(\Om)} \right] \leq C M \vge   \,,
\end{align}
for some $M>0$ independent of $n$ (as $(\phi_n)$ is bounded in $\DpAV$.) 

In order to estimate the second integral, we notice that $ \phi_n|_{\Om_{\vge }} \in W^{1,p}(\Om_{\vge })$ for each $n \in \N$, and $( \phi_n)$ is bounded in $W^{1,p}(\Om_{\vge })$ (by local uniform ellipticity). Since, $\Om_{\vge }$ is a smooth bounded domain we obtain by compactness that $\lim_{n\to\infty}\|\phi_n-\phi\|_{L^p(\Om_{\vge })}=0$ and $\phi_n \ra \phi$ a.e. (up to a subsequence). Consequently, $\lim_{n \ra \infty} \int_{\Om_{\varepsilon}} |\phi_n|^p \dx = \int_{\Om_{\varepsilon}} |\phi|^p \dx$. Now, for each $\varepsilon>0$, we have $|g_{\varepsilon}| |\phi_n|^p \leq \|g_{\varepsilon}\|_{L^{\infty}(\Om)} |\phi_n|^p$ for all $n \in \N$, and $|g_{\varepsilon}| |\phi_n|^p \ra |g_{\varepsilon}| |\phi|^p$ a.e.. Thus, by the generalized dominated convergence theorem \cite[Section 4, Theorem 16]{Royden}, we infer that $\lim_{n \ra \infty} \int_{\Om_{\varepsilon}} |g_{\varepsilon}||\phi_n|^p \dx = \int_{\Om_{\varepsilon}} |g_{\varepsilon}||\phi|^p \dx$ for each $\varepsilon>0$. Thus, by taking $n \ra \infty$ in \eqref{Req1:cpct} and using \eqref{Req:cpct}, we obtain
$$\limsup_{n \ra \infty} |T_g(\phi_n)-T_g(\phi)| \leq CM\vge \,.$$
Since  $CM$ is independent of $\vge$, and $\vge >0$ is arbitrary, we are done.
\end{proof} 
\begin{remarks} \label{Rmk:Hptilde} \rm
$(i)$ Clearly,  $\mathcal{H}_{p}(\Om,V) \subset \mathcal{H}_{p}(\Om,V^+)$, and $\|\cdot\|_{\mathcal{B}(\Om,V^+)} \leq \|\cdot\|_{\mathcal{B}(\Om,V)}$. Thus, $\mathcal{H}_{p}(\Om,V) \cap L^{\infty}_c(\Om) \subset \mathcal{H}_{p}(\Om,V^+) \cap L^{\infty}_c(\Om).$ Hence, 
$$\overline{\mathcal{H}_{p}(\Om,V) \cap L^{\infty}_c(\Om)}^{\|.\|_{\mathcal{B}(\Om,V)}} \subset \overline{\mathcal{H}_{p}(\Om,V) \cap L^{\infty}_c(\Om)}^{\|.\|_{\mathcal{B}(\Om,V^+)}} \subset   \overline{\mathcal{H}_{p}(\Om,V^+) \cap L^{\infty}_c(\Om)}^{\|.\|_{\mathcal{B}(\Om,V^+)}}.$$ 
Therefore,  $\mathcal{H}_{p,0}(\Om,V) \subset \mathcal{H}_{p,0}(\Om,V^+)$.	 It follows from Theorem~\ref{Thm:compactness} that if  $g \in \mathcal{H}_{p,0}(\Om,V)$, then $T_g$ is compact in $\mathcal{D}_{A,V^+}^{1,p}(\Om)$.

$(ii)$ In fact, if $p \in (1,N)$, $A=I_{N\times N}$ and $V=0$, then the converse of Theorem \ref{Thm:compactness} is also true \cite[Theorem 8]{AU}. The embedding $\mathcal{D}_I^{1,p}(\Om) \hookrightarrow L^{p^*}(\Om)$ played a crucial role in their proof. For $p \in (1,N)$, $V=0,$ and a matrix $A$ with local ellipticity function $\theta \in L^{\infty}(\Om)$, it clearly follows  that $\DpA=\mathcal{D}^{1,p}_I(\Om)$ as vector spaces having equivalent norms.  Indeed, let $\gth \in L^\infty(\Gw)$. Then
	\begin{align*}
	\int_\Gw |\nabla\phi|^p\dx&=\int_\Gw \gth^p\frac{1}{\gth^p}|\nabla\phi|^p\dx\leq 
	\int_\Gw \gth^p|\nabla\phi|_A^p\dx \leq \|\gth\|_{L^\infty(\Gw)}^p\int_\Gw |\nabla\phi|_A^p\dx\\
	&\leq \|\gth\|_{L^\infty(\Gw)}^p\int_\Gw \gth^p|\nabla\phi|^p\dx \leq
	\|\gth\|_{L^\infty(\Gw)}^{2p}\int_\Gw |\nabla\phi|^p\dx. 
	\end{align*}
	 Also, the $Q_{p,A,{\bf{0}}}$-capacity of compact sets is equivalent to the corresponding $Q_{p,I,{\bf{0}}}$-capacity. Therefore, if $\theta \in L^{\infty}(\Om)$, $V=0$, and $T_g$ is compact in $\mathcal{D}_A^{1,p}(\Om)$, then $g \in \mathcal{H}_{p,0}(\Om,0)$.

$(iii)$ Let $1<p<N$, $V=0,$ and $\theta \in L^{\infty}(\Om).$ Let $g \in \mathcal{H}_{p}(\Om,0) \cap L^\infty_c(\Gw)$  with compact support $K_g \Subset \Gw$. Then, using Remark \ref{Cap_def_eqiv}-$(v)$, we obtain
\begin{align*}
 \|g\|_{\H_p(\Om,{\bf{0}})} & =   \sup\left\{ \frac{\int_{F} |g| \dx}{{\rm{Cap}}_{\bf{1}}(F,\Om)} \, \, \biggm| \,  F \Subset \Om; {\rm{Cap}}_{\bf{1}}(F,\Om)\ne 0 \right\} \\
 & \leq    \sup\left\{\frac{\int_{F} |g| \dx}{{\rm{Cap}}_{\bf{1}}(F,\Om)} \, \, \biggm| \,  F \ \text{compact in} \ K_g; {\rm{Cap}}_{\bf{1}}(F,\Om)\ne 0 \right\} \\
 & \leq    \sup \! \left\{ \frac{\left[\int_{F} |g|^{\frac{N}{p}} \dx \right]^{\frac{p}{N}}\! |F|^{\frac{p}{p^*}}}{{\rm{Cap}}_{\bf{1}}(F,\Om)}  \, \biggm|   F  \text{ compact in }  K_g; \  {\rm{Cap}}_{\bf{1}}(F,\Om)\ne 0 \!\right\} 
  \leq C \|g\|_{L^{\frac{N}{p}}(\Om)} ,
\end{align*} 
for some $C>0$. Consequently,
$L^{\frac{N}{p}}(\Om) \hookrightarrow \H_{p,0}(\Om,{\bf{0}})$. 
\end{remarks}
\section{Attainment of best constant I}
In the present section we assume  that for $p\leq N$,  the following additional regularity assumption holds in a certain subdomain $\Gw'\Subset \Gw$:
$${\bf{(H0)}} : A\!\in \! C^{0,\gamma}_{\rm loc}(\Gw';\R^{N\times N}), \mbox{ and } g, \!V \!\in \! \M^q_{\loc}(p;\Gw'),\mbox{ where }  0\!< \!\gg \! \leq  \! 1, q\! > \! N, \mbox{ and } g \! \in \!  \mathcal{H}_p(\Om,V).$$
Under hypothesis (H0) positive solutions $v$ of the equation $Q'_{p,A,V-\gl g}[u]=0$ in $\Gw'$  \cite[Theorem~5.3]{Lieberman} are differentiable in $\Gw'$ and satisfy in any $\gw\Subset \Gw'$
$$\sup_{\om}|\nabla v|\leq C  \sup_{\Gw'}|v|,$$
for some positive constant $C$, depending only on $n,p,\gamma,q,{\mathrm{dist}}(\om,\Gw')$,   $\|A\|_{C^{0,\gamma}(\Gw')}$,  $\|\gth^{-1}\|_{L^\infty(\Gw')}$, $\|g\|_{M^q(\Gw')}$, and $\|V\|_{M^q(\Gw')}$.

 Recall that in view of \eqref{WHV3} the best constant  $\mathcal{B}_g(\Om)$  in \eqref{WH} for  $g \in  \mathcal{H}_p(\Om,V)$ is given by
$$\frac{1}{\mathcal{B}_g(\Om)}= \S_g(\Om)=\inf \{Q_{p,A,V}(\phi) \bigm| \phi \in \DpAV \,, \int_{\Om} |g| |\phi|^p \dx =1\} .$$

In the present and the following section we prove certain sufficient conditions on $g$ and $V$ so that the best constant in \eqref{WH} is attained in $\DpAV$. Let $\bigcup_n \Om_n= \Om$ be a compact smooth exhaustion of $\Om$. We define the following: 
\begin{align*}
    \S_{g}^{\overline{\infty}}(\Gw) & :=  \lim_{i\ra \infty} \inf \left \{Q_{p,A,V}(\phi) \bigm|  \phi \in \mathcal{D}^{1,p}_{A,V^+} (\Om \setminus \overline{\Gw_i}), \ \int_{\Om \setminus \overline{\Gw_i}} |g| |\phi|^p  \dx =1 \right \} .
\end{align*}
 Clearly, $\S_g(\Om) \leq \S_g^{\overline{\infty}}(\Gw)$. Using criticality theory we prove below that under some regularity assumptions if the spectral gap condition $\S_g(\Om) < \S_g^{\overline{\infty}}(\Gw)$ holds, then $\mathcal{B}_g(\Om)$ is attained.
\begin{theorem} \label{Thm:best_constant}
Let $g \!\in \!\mathcal{H}_p(\Om,V)\cap \mathcal{M}^{q}_{\loc}(p;\Om)$ be such that $\S_g(\Om) < \S_g^{\overline{\infty}}(\Gw)$.  Let $K\Subset K_1\Subset \Gw'$, where  $K$ is a compact set such that $Q_{p,A,V-\gl_1 g} \geq 0$ in $\Gw\setminus K$ for some 
$\gl_1 \in (\S_g(\Om),\S_g^{\overline{\infty}}(\Gw))$, and for $p\leq N$, $\Gw'\subset \Gw$ is a subdomain in which hypothesis \rm{(H0)} is satisfied. Let $G$ be a positive solution the equation $Q'_{p,A,V-\S_g(\Om)g}[u]=0$ in $\Gw\setminus K$ of minimal growth in a neighborhood of infinity in $\Gw$. 
Assume that $\int_{\Gw\setminus K_1} V^{-} G^p\dx <\infty$. Then, $\mathcal{B}_g(\Om)$ is attained in $\DpAV$. 
\end{theorem}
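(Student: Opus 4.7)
The plan is to construct the minimizer directly from the positive solution $G$ via criticality theory. Write $\gl_0:=\S_g(\Om)$. By the definition of $\gl_0$ and \eqref{WHV3}, the functional $Q_{p,A,V-\gl_0 g}$ is nonnegative on $W^{1,p}(\Om)\cap C_c(\Om)$. My first step would be to show that this functional is in fact \emph{critical} in $\Gw$: if it were subcritical, then by \cite[Proposition 4.19]{Yehuda_Georgios} I could add a small compactly supported $W\geq 0$ (with $\supp W\subset K$, enlarging $K$ if necessary) while preserving nonnegativity. Testing on a normalized minimizing sequence whose mass lies in $\Gw\setminus\supp W$---available from the very definition of $\S_g^{\overline{\infty}}(\Gw)$---would then contradict $\gl_0<\gl_1\leq \S_g^{\overline{\infty}}(\Gw)$. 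Criticality, combined with uniqueness of positive solutions of minimal growth (Remark~\ref{rem_min_gr}), identifies $G$ with the (suitably extended) ground state of $Q_{p,A,V-\gl_0 g}$ in $\Gw$.

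The core step is to prove $G\in\DpAV$ together with $\int_\Gw g G^p\dx<\infty$. Choose an exhausting sequence of cutoffs $\eta_j\in C_c^\infty(\Gw)$, $0\leq \eta_j\uparrow 1$. A Picone-type identity (cf.\ \cite{PinRa}), or equivalently the simplified-energy representation of Proposition~\ref{Prop:simp_energy} with $\mathbf{u}=G$, applied to the ground-state equation with test function $\eta_j^p G$ yields an approximate identity of the form
\[
\int_\Gw \eta_j^p\bigl(|\nabla G|_A^p+V G^p-\gl_0 g G^p\bigr)\dx = R_j,
\]
where the remainder $R_j$ vanishes as $j\to\infty$ thanks to the null-sequence construction of \cite{PinRa,PTT}. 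On $K_1$, hypothesis (H0) guarantees $G\in C^{1,\ga}_{\loc}\cap L^\infty$, and the Morrey--Adams inequality (Proposition~\ref{Prop:MA}) handles $\int_{K_1}V^- G^p\dx$; outside $K_1$, the hypothesis $\int_{\Gw\setminus K_1}V^- G^p\dx<\infty$ is precisely what is needed. Combining these ingredients with the Hardy inequality applied to $\eta_j G$ gives uniform bounds on $\int \eta_j^p(|\nabla G|_A^p+V^+ G^p)\dx$ and on $\int \eta_j^p g G^p\dx$, so monotone convergence yields $G\in\DpAV$, $\int gG^p\dx<\infty$, and the identity $Q_{p,A,V}(G)=\gl_0\int g G^p\dx$.

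Setting $\tilde G:=G\bigl(\int gG^p\dx\bigr)^{-1/p}$ then produces an element of $\DpAV$ with $\int g|\tilde G|^p\dx=1$ and $Q_{p,A,V}(\tilde G)=\gl_0=\S_g(\Om)$, proving that the best constant is attained. The main obstacle is the first step: inferring criticality of $Q_{p,A,V-\gl_0 g}$ in $\Gw$ from the spectral-gap assumption. It requires a careful interplay between the compactness/support properties of the perturbation $W$ furnished by \cite[Proposition 4.19]{Yehuda_Georgios} and minimizing sequences whose mass escapes to infinity via the definition of $\S_g^{\overline{\infty}}(\Gw)$. Once criticality is in hand, the simplified-energy and null-sequence machinery together with the moment condition $\int V^- G^p\dx<\infty$ completes the argument in a fairly standard way.
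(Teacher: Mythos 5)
Your overall strategy coincides with the paper's: prove that $Q_{p,A,V-\S_g(\Om)|g|}$ is critical, identify $G$ with its ground state, and show that the ground state belongs to $\DpAV$ and attains the infimum. However, your argument for the crucial criticality step does not close. Writing $\gl_0=\S_g(\Om)$, subcriticality of $Q_{p,A,V-\gl_0 |g|}$ yields, via \cite[Proposition~4.19]{Yehuda_Georgios}, a compactly supported $0\lneqq W$ with $Q_{p,A,V-\gl_0|g|-W}\geq 0$; but this produces no contradiction with the spectral gap. Test functions supported in $\Gw\setminus\supp W$ simply do not see $W$, and for them the inequality $Q_{p,A,V}(\phi)\geq\gl_0\int_\Gw |g||\phi|^p\dx$ holds in any case; nothing forces $Q_{p,A,V-(\gl_0+\vge)|g|}\geq 0$, because $W$ cannot absorb the extra $\vge|g|$ near infinity, while the information $Q_{p,A,V-\gl_1 |g|}\geq 0$ in $\Gw\setminus K$ cannot absorb it near $K$: the two regimes must be glued. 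That gluing is precisely the content of Lemma~\ref{lem-spectral-gap}: one constructs a positive solution $v$ of $Q'_{p,A,V-\gl_1|g|}[u]=0$ in $\Gw\setminus \bar K$ vanishing on $\partial K$, flattens it near $\partial K$ by composing with a $C^2$ function $F$ (Lemma~\ref{weak_lapl}) so as to obtain a \emph{global} positive supersolution of $Q'_{p,A,V-\gl_1|g|+\mathcal{V}}[u]=0$ with $\mathcal{V}\in\mathcal{M}^{q}_{c}(p;\Om)$ --- this is where hypothesis (H0) is genuinely needed to keep $\mathcal{V}$ in the Morrey class --- and then runs a convexity argument in the two parameters of $Q_{p,A,V-\gl_t|g|+s\mathcal{V}}$ to force $\nu(0)=0$. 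None of this machinery appears in your sketch, and the contradiction you describe is not one.

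The second step also has a circularity. The remainder in your cutoff/Picone computation with test functions $\eta_j^pG$ contains terms of the type $\int_\Gw G^p|\nabla\eta_j|_A^p\dx$ (and less tractable expressions when $p<2$), and showing that these vanish requires decay information on $G$ --- exactly what you are trying to establish; likewise, bounding $Q_{p,A,V}(\eta_jG)$ uniformly in $j$ before applying the Hardy inequality presupposes the conclusion. The paper avoids this: criticality furnishes a null-sequence, which Lemma~\ref{lem-smaller_null-seq} replaces by one dominated pointwise by the ground state $\Phi$; the finiteness of $\int_\Gw|g|\Phi^p\dx$ is then obtained from the Kova\v{r}\'{\i}k--Pinchover necessary condition (Proposition~\ref{Thm:KP}) applied with the spectral-gap hypothesis, and $\int_\Gw V^-\Phi^p\dx<\infty$ from the stated assumption on $G$. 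Pointwise domination by $\Phi$ makes the right-hand side of the energy identity uniformly bounded, so the null-sequence is bounded in $\DpAV$ and converges weakly to $\Phi$, which attains $\S_g(\Om)$. You should replace your cutoff argument by these two devices: the gluing-plus-convexity proof of criticality, and the dominated null-sequence combined with Proposition~\ref{Thm:KP}.
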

We begin  with a key lemma, claiming that if the  above spectral gap condition  is satisfied for a Hardy-weight $g$, then $Q_{p,A,V-\S_g(\Om)|g|}$ is critical.  The proof is similar to the proof of \cite[Lemma~2.3]{Lamberti}.
\begin{lemma}\label{lem-spectral-gap}
	 Let $g \in \mathcal{H}_p(\Om,V)\cap \mathcal{M}^{q}_{\loc}(p;\Om)$ be such that $\S_g(\Om) < \S_g^{\overline{\infty}}(\Gw)$. For $p\leq N$, assume further that the hypothesis {\rm{(H0)}}  is satisfied in a subdomain $\Gw'$, and that there exists a smooth open  set $K_0\Subset  \Gw'$  such that $Q_{p,A,V-\gl_1|g|} \! \geq \!0$ in $\Gw\setminus \bar{K_0}$ for some $\gl_1 \in (\S_g(\Om),\S_g^{\overline{\infty}}(\Gw))$.  Then $Q_{p,A,V-\S_g(\Om)|g|}$ is critical in $\Gw$. 
\end{lemma}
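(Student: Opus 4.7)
The aim is to construct a null-sequence for $Q_{p,A,V-\gl|g|}$, where $\gl:=\S_g(\Om)$, and then invoke the null-sequence characterization of criticality from \cite{Yehuda_Georgios} (cf.\ Remark~\ref{Rmk:Prop_cap}-(i)). Fix a nonnegative minimizing sequence $(\phi_n) \subset W^{1,p}(\Om) \cap C_c(\Om)$ with $\int_\Om|g||\phi_n|^p\dx = 1$ and $Q_{p,A,V}(\phi_n) \to \gl$, so that
\begin{equation*}
Q_{p,A,V-\gl|g|}(\phi_n) \;=\; Q_{p,A,V}(\phi_n)-\gl \;\longrightarrow\; 0.
\end{equation*}
Since $g \in \H_p(\Om,V) \setminus \{0\}$ the functional $Q_{p,A,V}$ is subcritical, which upgrades the weak bound $Q_{p,A,V}\ge 0$ to a \emph{strict} Hardy-type bound $Q_{p,A,V}(\phi) \geq \gve \bigl[\int_\Om|\nabla\phi|_A^p\dx + \int_\Om V^+|\phi|^p\dx\bigr]$ for some $\gve\in(0,1)$; this makes $(\phi_n)$ bounded in $\DpAV$. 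Using reflexivity (Remark~\ref{Remark_ref}) and Rellich-type local compactness (Remark~\ref{Remark:cpct}-(iii)), extract $\phi_n \rightharpoonup \phi$ in $\DpAV$ with $\phi_n \to \phi$ in $L^p_{\loc}(\Om)$ and pointwise a.e.

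The crucial point is $\phi \not\equiv 0$, for which we use the spectral gap together with the hypothesis $Q_{p,A,V-\gl_1|g|} \geq 0$ in $\Om\setminus \overline{K_0}$. Pick $i$ with $\overline{K_0} \subset \Om_i$ and a cut-off $\chi_i \in C_c^\infty(\Om)$, $0\le\chi_i\le 1$, $\chi_i\equiv 1$ on $\overline{\Om_i}$, $\supp\chi_i\subset\Om_{i+1}$. Then $(1-\chi_i)\phi_n$ is supported in $\Om \setminus \overline{K_0}$, so
\begin{equation*}
Q_{p,A,V}\bigl((1-\chi_i)\phi_n\bigr) \geq \gl_1 \int_\Om |g|(1-\chi_i)^p|\phi_n|^p\dx.
\end{equation*}
Assume for contradiction $\phi\equiv 0$, so $\phi_n\to 0$ in $L^p_{\loc}(\Om)$. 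Using the concavity inequality $0\le 1-(1-\chi_i)^p\le p\chi_i$ together with the Morrey-Adams inequality (Proposition~\ref{Prop:MA}) applied to the compactly supported function $\chi_i^{1/p}\phi_n$, one verifies
\begin{equation*}
\int_\Om |g|(1-\chi_i)^p|\phi_n|^p\dx \to 1, \quad \int_\Om V^\pm(1-\chi_i)^p|\phi_n|^p\dx = \int_\Om V^\pm|\phi_n|^p\dx + o(1).
\end{equation*}
Expanding $\nabla((1-\chi_i)\phi_n)=(1-\chi_i)\nabla\phi_n-\phi_n\nabla\chi_i$ and applying the Minkowski-type inequality $(a+b)^p\le(1+\gd)a^p+C_\gd b^p$ with $(1-\chi_i)^p\le 1$, and using $\int|\phi_n|^p|\nabla\chi_i|_A^p\dx = o(1)$ (compact support of $\nabla\chi_i$ plus $L^p_{\loc}$ convergence), we arrive at the IMS-type estimate
\begin{equation*}
Q_{p,A,V}\bigl((1-\chi_i)\phi_n\bigr) \le Q_{p,A,V}(\phi_n) + \gd M + o(1),
\end{equation*}
with $M$ an upper bound for $\int|\nabla\phi_n|_A^p\dx$. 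Taking $\liminf_n$, then $\gd\to 0$, forces $\gl\ge\gl_1$, contradicting $\gl_1>\gl$.

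Hence $\phi\not\equiv 0$. Weak lower semicontinuity plus the localization run now on $\chi_i\phi_n$ (to trap the retained mass) yield $Q_{p,A,V}(\phi)\le\gl$ and $\int|g||\phi|^p\dx = 1$, so $\phi$ attains $\S_g(\Om)$ and is a nonnegative weak solution of $Q'_{p,A,V-\gl|g|}[u]=0$ in $\Om$. Under hypothesis (H0), Lieberman-type $C^{1,\ga}_{\loc}$ regularity and the Harnack inequality give $\phi>0$ in $\Om$ and upgrade $\phi_n\to\phi$ to local uniform convergence (after fixing a normalization point). Thus $(\phi_n)$ is a positive null-sequence for $Q_{p,A,V-\gl|g|}$ with limit $\phi$, and the Agmon-Allegretto-Piepenbrink-type criticality characterization of \cite{Yehuda_Georgios} concludes that $Q_{p,A,V-\gl|g|}$ is critical in $\Om$.

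\textbf{Main obstacle.} The bulk of the technical work lies in the IMS-type localization for $p\ne 2$: the classical quadratic identity is unavailable, and the cross terms generated by $\nabla\chi_i$ must be absorbed delicately, with the $V^-$-contribution near $\supp\chi_i$ controlled by Morrey-Adams. A conceptually cleaner alternative is to perform the entire localization on the simplified energy $E_{\bf u}$ (Definition~\ref{Def:SEF}), whose integrand is a sum of nonnegative pieces, and to transfer the estimates back via Proposition~\ref{Prop:simp_energy}.
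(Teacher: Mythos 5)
Your proposal takes a route that is genuinely different from the paper's (which perturbs the operator by a compactly supported potential built from a modified positive solution outside $K_0$, and then exploits convexity of the set $\{(t,s)\mid Q_{p,A,V-\gl_t|g|+s\mathcal{V}}\geq 0\}$ together with \cite[Proposition~4.19]{Yehuda_Georgios}), but as written it has two genuine gaps that the paper's hypotheses do not let you close.

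First, the coercivity claim is false in general: subcriticality of $Q_{p,A,V}$ (i.e.\ $\mathcal{H}_p(\Om,V)\neq\{0\}$) does \emph{not} yield $Q_{p,A,V}(\phi)\geq \ep\big[\int_\Om|\nabla\phi|_A^p\dx+\int_\Om V^+|\phi|^p\dx\big]$. Nonnegativity of $Q_{p,A,V}$ only gives $\int_\Om V^-|\phi|^p\dx\leq \int_\Om[|\nabla\phi|_A^p+V^+|\phi|^p]\dx$ with constant exactly $1$ (this is \eqref{WHV_}), and the best constant for $V^-$ relative to $Q_{p,A,V^+}$ can equal $1$ even when $Q_{p,A,V}$ is subcritical. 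Consequently your minimizing sequence need not be bounded in $\DpAV$, and the whole weak-compactness machinery does not start. This is precisely why Theorem~\ref{Thm:best_constant} — which does need the null-sequence to be bounded in $\DpAV$ — imposes the \emph{additional} hypothesis $\int_{\Gw\setminus K_1}V^-G^p\dx<\infty$; no such hypothesis is available in Lemma~\ref{lem-spectral-gap}, and indeed its conclusion (criticality) must hold without the ground state belonging to $\DpAV$. Second, even granting boundedness, passing to the limit in the energy to conclude that $\phi$ is a minimizer (and hence a solution of the Euler--Lagrange equation) requires weak lower semicontinuity of $Q_{p,A,V}$ on $\DpAV$; the term $-\int_\Om V^-|\phi|^p\dx$ is upper, not lower, semicontinuous under weak convergence, so you would need compactness of $T_{V^-}$ — i.e.\ hypothesis (H1) of Section~6 — which is likewise not assumed in this lemma. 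The paper's perturbation-and-convexity argument is designed exactly to sidestep both obstructions: it never works in $\DpAV$ and never needs any integrability or compactness property of $V^-$ or $g$ beyond the stated local Morrey/regularity assumptions. Your localization idea (IMS-type estimate using $Q_{p,A,V-\gl_1|g|}\geq 0$ outside $K_0$ to rule out vanishing) is sound in spirit and resembles arguments used elsewhere for attainment results, but it cannot by itself rescue the two missing ingredients above.
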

\begin{proof}
		Following   \cite[Lemma~4.6]{pin90} and \cite[Lemma~2.3]{Lamberti}, we set
	$$S\!:=\!\{t\!\in\! {\mathbb{R}}\!\mid \! Q_{p,A,V-t|g|}\!\geq\! 0 \mbox{ in }\Gw \},\   S_{\infty}\!:=\!\{  t\!\in\! {\mathbb{R}}\!\mid\!  Q_{p,A,V-t|g|}\!\geq\! 0 \mbox{ in }  \Omega\! \setminus \!\bar K \mbox{ for some } K\!\Subset \Omega  \}.$$
	Clearly, $S$ and $S_\infty$ are intervals, and since $Q_{p,A,V}$ has  a spectral gap, it follows that
	$$S=\ (\!-\infty , \S_g(\Om)] \ \varsubsetneq \ S_{\infty} \subset \ (\!-\infty , \S_g^{\overline{\infty}}(\Gw)].$$
	For simplicity, we set $\lambda_0=\S_g(\Om)$.
	Let $\lambda _1 \in S_{\infty }\setminus S$ and $K_0\Subset \Gw'$ satisfy the hypothesis of the lemma. Consequently,  the equation  $Q'_{p,A,V-\gl_1|g|}[u]=0$ in $\Gw\setminus \bar{K_0}$ admits a positive solution.  
	
	{\bf Claim} There exists $0\lneqq \mathcal{V}\in \mathcal{M}^{q}_{c}(p;\Om)$ such that $Q_{p,A,V-\gl_1|g|+\mathcal{V} }\geq 0$ in $\Gw$.

	Fix a smooth open  set $K$ satisfying  $K_0\Subset K\Subset  \Gw'$.  We first show that there exists a positive solution $v$ of the equation  $Q'_{p,A,V-\gl_1|g|}[u]=0$ in $\Gw\setminus \bar{K}$ satisfying $v=0$ on $\partial K$.
	
	To this end,  consider  a smooth exhaustion $\{\Gw_i\}_{i\in\N}$ of $\Gw$ by smooth relatively compact subdomains such that $x_0\in\Gw_1\setminus \bar{K}$    and such that 
	$\bar{K}\subset \Omega_{i-1}\Subset \Omega_i $ for all $i>1$.  Let $v_{i}$ be the unique positive solution of the Dirichlet problem 
	\[
	\left\{
	\begin{array}{ll}
	Q'_{p,A,V-\gl_1|g|}[u]=f_i & \mbox{in }\Gw_i\setminus \bar{K},
	\\[8pt]
	u=0 & \mbox{on }\partial (\Gw_i\setminus \bar{K}),
	\end{array}
	\right.
	\]
	where $f_i$ is  a nonzero nonnegative function in $C_c^\infty(\Gw_i\setminus \overline{\Gw_{i-1} })$      normalized in such a way that  $v_i(x_0)=1$. The existence and uniqueness of such a solution is guaranteed by \cite[Theorem~3.10]{Yehuda_Georgios} combined with the fact that $Q'_{p,A,V-\gl_1|g|}[u]=0$  admits a positive solution in $\Gw\setminus \bar{K_0}$. 
	
	By the Harnack principle and elliptic regularity (see for example, \cite{Yehuda_Georgios}) the sequence $\{v_i\}_{i\in\N}$ admits a subsequence converging locally uniformly to a positive solution $v$ of the equation  $Q'_{p,A,V-\gl_1|g|}[u]=0$ in $\Gw\setminus \bar{K}$ satisfying $v=0$ on $\partial K$.
	Note that by classical regularity theory we  have that $v$ is of class $C^{\alpha }$ up to $\partial K$. Moreover, for $p\leq N$,
	hypothesis (H0) implies that $v$ has  a locally bounded gradient in $\Gw'\setminus K$.

Let  $K_1$  be an open set such that  $K\Subset K_1\Subset  \Gw'$ and let 
$\min_{x\in \partial K_1} v(x)=m>0$. 
Fix $\varepsilon>0$ satisfying  $8\varepsilon < m$.  Let  $F$ be a $C^{2}$  function from $[0, +\infty [$ to $[0,+\infty [$ such that 
$F(t)=\varepsilon$ for all $0\le t \le 2\varepsilon $ and $F(t)= t$ for all $t\geq  4\varepsilon $  and such that $F'(t)\ne 0$ for all $t>2\varepsilon $. Assume also that 
$|F'(t)|^{p-2}F''(t)\to 0$ as $t\to 2\varepsilon$, hence the function $t\to |F'(t)|^{p-2}F''(t)$  (defined identically equal to zero on $ [0,2\varepsilon]$)  is continuous  on $[0, +\infty [$ (for this purpose, it is enough  for example that $F$ is chosen to be of the type $\varepsilon + (t-2\varepsilon)^{\beta }$ for all $t>2\varepsilon$ sufficiently close to $2\varepsilon$ and $\beta >\max\{p',2\}$).       

We set $\bar v(x) := F( v(x))$ for all   $x\in K_1\cap (\Omega \setminus {\bar  K})$.  By the definition of $\bar v$, there exists an open  neighborhood $U$ of $\partial K$ such that $\bar v (x) =\varepsilon $ for all $x\in U \cap (\Omega \setminus {\bar  K})$, and there exists an open neighborhood  $U_1$ of $\partial K_1$ such that $\bar v (x)=v(x)$ for all $x\in U_1\cap K_1$ . 
Thus $\bar v (x) $ can be extended continuously into the whole of $\Omega $ by setting $\bar v (x)=\varepsilon $ for all $x\in \bar K$  and $\bar v(x)= v(x)$ for all $x\in \Omega \setminus K_1$. 
By   Lemma~\ref{weak_lapl} in Appendix~\ref{appendix1}, we have in the weak sense that 
\begin{equation}
\label{superposition} 
-\Delta_{p,A} \bar v =-|F'(v)|^{p-2}[(p-1)F''(v)|\nabla v|_A^p+F'(v)\Delta _{p,A}v ] \qquad \mbox{in } K_1 \setminus \bar K. 
\end{equation}
By our assumptions on $F$ and $v$, it follows that $-\Delta_{p,A} \bar v \in L^1_\loc (\Gw)$, and
$$-\Delta_{p,A} \bar v = 
\begin{cases}
0 &  \mbox{in } K \! \cup \!  (U \!\cap\! (\Omega \!\setminus \!{\bar K})),\\
(\lambda_1 g-V) v^{p-1}&   \mbox{in } (\Omega\!\setminus \!\bar K_1) \!\cup\! (U_1\! \cap \!  K_1),\\
-|F'( v)|^{p-2}[(p \! - \! 1)F''(v)|\nabla v|_A^p \! + \!F'(v)\Delta _{p,A}v] &
\quad \mbox{ otherwise}. 
\end{cases}
$$

 Define the potential ${\mathcal{V}}$ by setting
	$$
	{\mathcal  {V}}= \frac{|Q'_{p,A,V-\gl_1|g|}[\bar v]   | }{\bar v ^{p-1}} \,.
	$$
	Clearly, ${\mathcal  {V}} \in  L^1_c(\Gw)$ which for $p> N$ means that ${\mathcal  {V}} \in  \mathcal{M}^{q}_{c}(p;\Om)$. Moreover, for $p \leq N$, Hypothesis (H0) implies that ${\mathcal  {V}} \!\in \! \mathcal{M}^{q}_{c}(p;\Om)$. Therefore,  
	$\bar v$ is a weak positive supersolution of  $Q'_{p,A,V-\gl_1|g|+\mathcal{V}}[u] = 0$ in $\Gw$. Hence,   $Q_{p,A,V-\gl_1|g|+\mathcal{V} }\geq 0$ in $\Gw$,  and the Claim is proved.
	
	We set $\lambda_t=t\lambda_1+(1-t)\lambda_0$. By using \cite{Yehuda_Georgios}, it follows that the set
	$$\{(t,s)\in [0,1]\times {\mathbb{R}} \mid  Q_{p,A,V-\gl_t|g|+s\mathcal{V}}\geq 0 \mbox{ in } \Gw\}$$  is a convex set. Hence, the function $\nu:[0,1]\to \R$  defined by
	$$\nu (t):=\min \{s\in {\mathbb{R}} \mid Q_{p,A,V- \gl_t|g|+s\mathcal{V}}\geq 0 \mbox{ in } \Gw \}$$ is convex . Since $\mathcal{V}$ has compact support,
	\cite[Proposition~4.19]{Yehuda_Georgios} implies that $Q_{p,A,-\lambda_t|g| +\nu (t)\mathcal{V} }$ is critical for all $t\in [0,1]$. We note that by definition $\nu (t)>0$ for all $t\in (0,1]$, while
	$\nu (0)\le 0$. Since $\nu $ is convex, we have $\nu (0)=0$, and hence, $Q_{p,A,V-\S_g(\Om)|g|}$ is critical in $\Gw$. 
\end{proof}
\begin{definition}[Null sequence and ground state] \label{def-gs}{\em 
A nonnegative sequence $(\phi_n) \in W^{1,p}(\Om) \cap C_c(\Om)$  is called a null-sequence with respect to the nonnegative functional $Q_{p,A,V}$ if
\begin{itemize}
    \item there exists a subdomain $O \Subset \Om$ such that $\|\phi_n\|_{L^p(O)}  \asymp 1$ for all $n \in \N,$
    \item $\lim_{n \ra \infty} Q_{p,A,V}(\phi_n)=0$. 
\end{itemize}
We call a positive function $\Phi \in W^{1,p}_{\loc}(\Om) \cap C(\Om)$ a ground state of $Q_{p,A,V}$ if $\phi$ is an $L^p_{\loc}(\Om)$ limit of a null-sequence.}
\end{definition}
\begin{remark}\label{rem-gs}{\em 
	A nonnegative functional $Q_{p,A,V}$ is critical in $\Gw$ if and only if $Q_{p,A,V}$ admits a null-sequence in $\Gw$. Moreover, any null-sequence converges weakly in $L^{p}_\loc(\Gw)$ to the unique (up to a multiplicative constant) positive (super)solution of the equation $Q'_{p,A,V}[u]=0$ in $\Gw$.  Furthermore,   there exists a null-sequence which converges locally uniformly in $\Gw$ to the ground state, and the ground state is a positive solution of the equation $Q'_{p,A,V}[u]=0$ in $\Gw$ which has minimal growth in  a neighborhood of infinity in $\Gw$  \cite{Arxiv_Pinchover,Yehuda_Georgios}.     }
\end{remark}
\begin{lemma}\label{lem-smaller_null-seq}
Let $(\phi_n) \in W^{1,p}(\Om) \cap C_c(\Om)$ be a null-sequence with respect to the nonnegative functional $Q_{p,A,V}$, and let $\Phi \in W^{1,p}_{\loc}(\Om) \cap C(\Om)$ be the corresponding ground state. For each $n \in \N$, let $\hat{\phi}_n=\min\{\phi_n, \Phi\}$.  Then $(\hat\phi_n)$ is a null-sequence.
\end{lemma}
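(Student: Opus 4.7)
The plan is to verify for $(\hat\phi_n)$ the three defining properties of a null-sequence for $Q_{p,A,V}$: (a) $\hat\phi_n\in W^{1,p}(\Gw)\cap C_c(\Gw)$ with $\hat\phi_n\geq 0$, (b) $Q_{p,A,V}(\hat\phi_n)\to 0$, and (c) $\|\hat\phi_n\|_{L^p(\tilde O)}\asymp 1$ for some $\tilde O\Subset\Gw$. Property (a) is routine: since $\Phi>0$ is continuous, $\supp\hat\phi_n\subset\supp\phi_n\Subset\Gw$ and $\hat\phi_n\geq 0$; the standard lattice property of $W^{1,p}$ combined with $\Phi\in W^{1,p}_\loc(\Gw)$ gives $\hat\phi_n\in W^{1,p}(\Gw)$.

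For (b) I would factor through the simplified energy of Proposition~\ref{Prop:simp_energy} applied with ${\bf{u}}=\Phi$. Writing $\psi_n:=\phi_n/\Phi$, one has $\phi_n=\psi_n\Phi$ and $\hat\phi_n=\min(\psi_n,1)\Phi$, so Proposition~\ref{Prop:simp_energy} yields $Q_{p,A,V}(\hat\phi_n)\leq C_1^{-1}E_\Phi(\min(\psi_n,1))$. Since $\nabla\min(\psi_n,1)$ equals $\nabla\psi_n$ on $\{\psi_n\leq 1\}$ and vanishes a.e.\ on $\{\psi_n>1\}$, the nonnegative Lagrangian defining $E_\Phi(\min(\psi_n,1))$ coincides pointwise with that of $E_\Phi(\psi_n)$ on the first set and is identically zero on the second. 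Hence
$$E_\Phi(\min(\psi_n,1))\leq E_\Phi(\psi_n)\leq C_2\, Q_{p,A,V}(\phi_n),$$
and therefore $Q_{p,A,V}(\hat\phi_n)\leq (C_2/C_1)\,Q_{p,A,V}(\phi_n)\to 0$.

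Step (c) is the main obstacle, as the simplified energy offers no direct lower bound on $\|\hat\phi_n\|_{L^p}$. I would argue by contradiction. By Remark~\ref{rem-gs}, $\phi_n\wra c\Phi$ weakly in $L^p_\loc(\Gw)$ with $c>0$ (positivity being built into the statement that the weak limit is a positive supersolution, together with the normalization $\|\phi_n\|_{L^p(O)}\asymp 1$). Suppose no $\tilde O\Subset\Gw$ provides the required lower bound for the full sequence, i.e., $\liminf_n\|\hat\phi_n\|_{L^p(\tilde O)}=0$ for every $\tilde O\Subset\Gw$. A diagonal extraction along a countable smooth exhaustion of $\Gw$ then yields a subsequence (still denoted $\hat\phi_n$) with $\hat\phi_n\to 0$ in $L^p_\loc(\Gw)$ and, after a further subsequence, pointwise a.e.\ in $\Gw$. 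Since $\Phi>0$ everywhere, $\min(\phi_n,\Phi)\to 0$ a.e.\ forces $\phi_n\to 0$ a.e. Combined with the $L^p_\loc$-boundedness of $(\phi_n)$ and $p>1$ (hence local equi-integrability), Vitali's convergence theorem identifies the weak limit of $\phi_n$ as $0$, contradicting $c>0$. Therefore some $\tilde O\Subset\Gw$ satisfies $\liminf_n\|\hat\phi_n\|_{L^p(\tilde O)}>0$, and together with the automatic upper bound $\|\hat\phi_n\|_{L^p(\tilde O)}\leq\|\Phi\|_{L^p(\tilde O)}<\infty$ this gives $\|\hat\phi_n\|_{L^p(\tilde O)}\asymp 1$ for large $n$; discarding finitely many terms (which does not affect null-sequence status) completes the verification.
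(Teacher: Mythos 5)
Your proposal is correct, but both of its main steps differ from the paper's argument. For the energy decay, the paper proves the sharper bound $Q_{p,A,V}(\hat\phi_n)\leq Q_{p,A,V}(\phi_n)$ by testing the ground-state equation $Q'_{p,A,V}[\Phi]=0$ with $(\phi_n^p-\Phi^p)_+/\Phi^{p-1}$ on the set $\{\phi_n>\Phi\}$ and invoking the Picone identity \eqref{eq-picone}; you instead pass to the simplified energy $E_\Phi$ via Proposition~\ref{Prop:simp_energy} and observe that truncating $\psi_n=\phi_n/\Phi$ at height $1$ can only decrease the (nonnegative) integrand. This costs you the multiplicative factor $C_2/C_1$, which is immaterial here, and in exchange avoids justifying the admissibility of the Picone test function; it is a legitimate and arguably cleaner route (do note, for completeness, that $\psi_n\in W^{1,p}(\Om)\cap C_c(\Om)$ because $\Phi$ is continuous and bounded away from zero on $\supp\phi_n$). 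For the normalization, the paper uses that the ground state is by definition the \emph{strong} $L^p_{\loc}$ limit of the null-sequence, so $\lim_n\|\hat\phi_n\|_{L^p(O)}=\|\Phi\|_{L^p(O)}$ follows from dominated convergence; your contradiction argument via diagonal extraction, a.e.\ convergence and Vitali's theorem is more roundabout but valid, and has the minor virtue of only needing weak $L^p_{\loc}$ convergence to a positive limit. (A one-line alternative available to you: $|\hat\phi_n-\Phi|=(\Phi-\phi_n)_+\leq|\phi_n-\Phi|$, so $\hat\phi_n\to\Phi$ in $L^p_{\loc}$ whenever $\phi_n\to\Phi$ does.) Your handling of the finitely many possibly degenerate initial terms is slightly informal but harmless, since enlarging $\tilde O$ fixes it without discarding anything.
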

\begin{proof}
Clearly, $\hat{\phi}_n \in W^{1,p}(\Om) \cap C_c(\Om)$ and $Q_{p,A,V}[\hat{\phi}_n] \geq 0$. We claim $Q_{p,A,V}(\hat{\phi}_n) \leq Q_{p,A,V}(\phi_n)$. Consider the open set 
$O_n:= \{x\in \Gw \mid \phi_n(x)>\Phi(x)\}$. Since $Q_{p,A,V}$ is nonnegative and $\phi$ is a ground state, it follows that $Q_{p,A,V}'[\Phi] = 0$ in $\Gw$. By testing this equation with the nonnegative function ${(\phi_n^p-\Phi^p)_+}/{\Phi^{p-1}}$, we obtain
\begin{align} \label{ineq:Bianchi}
     &0 \!\leq \!\int_{O_n} |\nabla \Phi|_A^{p-2} \nabla \Phi \!\cdot\!\nabla \left(\frac{\phi_n^p}{\Phi^{p-1}}\right)\dx + \int_{O_n} V\phi_n^p\dx - Q_{p,A,V}(\Phi|_{O_n}) \nonumber \\ 
    & \!= \! p \int_{O_n}\!\!\! \left(\frac{\phi_n}{\Phi}\right)^{p-1}\!\! |\nabla \Phi|_A^{p-2} \nabla \Phi  \!\cdot\!\nabla \phi_n\dx \!- \!(p-1 ) \!\!\int_{O_n} \!\!\!\left(\frac{\phi_n}{\Phi}\right)^{p} |\nabla \Phi|_A^p dx +\!\!\!\int_{O_n} \!\!\!V\phi_n^p \dx - Q_{p,A,V}(\Phi|_{O_n})
\end{align}
Now, by the Picone-type identity \cite[Lemma 4.9]{Arxiv_Pinchover} for $Q_{p,A,V}$, we have 
\begin{equation}\label{eq-picone}
L(\phi_n,\Phi):= |\nabla \phi_n|_A^p +(p-1) \frac{\phi_n^p}{\Phi^p} |\nabla \Phi|_A^p - p \left(\frac{\phi_n}{\Phi}\right)^{p-1} |\nabla \Phi|_A^{p-2} \nabla \Phi  \!\cdot\! \nabla \phi_n \geq 0 .
\end{equation}
Using the above Picone identity in \eqref{ineq:Bianchi} to get
\begin{align*} 
    0 & \leq  \int_{O_n} |\nabla \phi_n|^p \dx- \int_{O_n} L(\phi_n,\Phi)\dx   + \int_{O_n} V\phi_n^p \dx- Q_{p,A,V}(\Phi|_{O_n}) \\
    & \leq \int_{O_n} |\nabla \phi_n|^p \dx  + \int_{O_n} V\phi_n^p \dx- Q_{p,A,V}(\Phi|_{O_n}) = Q_{p,A,V}(\phi_n|_{O_n}) -Q_{p,A,V}(\Phi|_{O_n}) \,.
\end{align*}
Thus, $Q_{p,A,V}(\hat{\phi}_n|_{O_n})=Q_{p,A,V}(\Phi|_{O_n}) \leq Q_{p,A,V}(\phi_n|_{O_n})$. On the other hand, since $\hat{\phi}_n = \phi_n$ on $\Om \setminus O_n$, our claim follows. 

Now, since $\lim_{n \ra \infty} Q_{p,A,V}(\phi_n) =0$, it follows that $\lim_{n \ra \infty} Q_{p,A,V}(\hat{\phi}_n) =0$. By homogeneity of  $Q_{p,A,V}$, we may assume that  $\|\phi_n\|_{L^{p}(O)} =1$ where $O\Subset \Gw$ is the given open set. The dominated convergence theorem implies that $\lim_{n \ra \infty} \|\hat\phi_n\|_{L^{p}(O)} = 1$, and $\hat{\phi}_n \ra \Phi$ in $L^p_{\loc}(\Om)$. Thus, $(\hat{\phi}_n)$ is a null-sequence.
\end{proof}
\begin{proof}[Proof of Theorem~\ref{Thm:best_constant}]
	Lemma~\ref{lem-spectral-gap} implies that  $Q_{p,A,V-\S_g(\Gw)|g|}$ is critical in $\Gw$, let $\Phi$ be its ground state satisfying $\Phi(x_0)=1$, where $x_0\in\Gw$. By Lemma~\ref{lem-smaller_null-seq}, the functional $Q_{p,A,V-\S_g(\Gw)|g|}$ admits a null-sequence $(\hat\phi_n)$ such that $\hat\phi_n \!\leq\! \Phi$ for all $n\!\geq \!1$, and   $\hat\phi_n\to \Phi$ in $L^p_{\loc}(\Om)$.
We have
\begin{align} \label{Req:1}
\int_{\Gw} |\nabla \hat\phi_n|_A^p \dx + \int_{\Gw} V_+| \hat\phi_n|_A^p \dx =  Q_{p,A,V-\S_g(\Gw)|g|}(\hat\phi_n) +\int_{\Gw} V_{-} \hat\phi_n^p \dx+
\S_g(\Gw)\int_{\Gw} |g|\hat\phi_n^p \dx.
\end{align}
Recall that $\lim_{n \ra \infty} Q_{p,A,V-\S_g(\Gw)|g|}(\hat{\phi}_n) =0$. Moreover,   for $\S_g(\Gw)<s<\S_g^{\overline{\infty}}$ there exists $K\Subset \Gw$ such that $Q_{p,A,V-s|g|} \geq 0$ in $\Gw\setminus K$. Therefore, by the Kova\v{r}\'{\i}k-Pinchover necessary condition (see Proposition~\ref{Thm:KP}),  we obtain that $\int_{\Gw} |g| \Phi^p\dx <\infty$. Our assumption on $V^-$ implies that 
	$\int_{\Gw} V^-\Phi^p\dx <\infty$. Consequently, \eqref{Req:1} implies that the sequence 
	$$\left(\int_{\Gw} \big[|\nabla \hat\phi_n|_A^p+V_+| \hat\phi_n|_A^p \big] \dx \right)$$ 
	is a bounded sequence, i.e., $\hat\phi_n$ is bounded in $\DpAV$. Hence, $\hat\phi_n \wra \phi$ in $\DpAV$ for some $\phi \in \DpAV$. Further, since $\hat\phi_n \ra \Phi$ in $L^p_{\loc}(\Om)$, it follows that $\Phi=\phi \in \DpAV.$ Now, by letting $n \ra \infty $ in \eqref{Req:1} and recalling \eqref{WHV3}, we get
$$ \S_g(\Gw) \int_{\Om}  |g||\Phi|^p\dx \leq \int_{\Om} [|\nabla \Phi|_A^p + V|\Phi|^p]\dx \leq \S_g(\Gw) \int_{\Om}  |g||\Phi|^p\dx.$$
Consequently, the best constant $\S_g(\Gw)$ is attained at $\Phi \in \DpAV$. 
\end{proof}
\begin{lemma}\label{lem-Hp-HP0}
Let $g \in \mathcal{H}_p(\Om,V)\cap \mathcal{M}^q_{\loc}(\Om)$, $g\neq 0$, satisfy $\S_g(\Om) = \S_g^{\overline{\infty}}(\Gw)$. Then $g \not\in \mathcal{H}_{p,0}(\Om,V)$. Moreover, if $Q_{p,A,V}$ is subcritical in $\Gw$, and $p=2$ or $V=0$, then  a Hardy-weight $g$ satisfying $\S_g(\Om) = \S_g^{\overline{\infty}}(\Gw)$ exists and $\mathcal{H}_{p}(\Om,V) \neq \mathcal{H}_{p,0}(\Om,V)$. 
\end{lemma}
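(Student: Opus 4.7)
The plan is to establish the non-membership statement by a direct contradiction argument built on the definition of $\mathcal{H}_{p,0}(\Om,V)$ together with Theorem~\ref{Thm:Char}, and to deduce the existence part by invoking the construction of an \emph{optimal Hardy-weight} of Devyver--Fraas--Pinchover \cite{DFP,DP}, which is established in the literature precisely in the two regimes $p=2$ or $V\equiv 0$ and whose defining properties include the equality $\S_g=\S_g^{\overline{\infty}}$.

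For the first assertion, I would argue by contradiction. Assuming $g\in\mathcal{H}_{p,0}(\Om,V)$, unfold the definition of $\S_g^{\overline{\infty}}(\Gw)$ through a diagonal selection to obtain a sequence $(\phi_k)\subset W^{1,p}(\Om)\cap C_c(\Om)$ with $\supp\phi_k\subset\Om\setminus\overline{\Om_{i_k}}$, $i_k\to\infty$, $\int_\Om|g||\phi_k|^p\dx=1$, and $Q_{p,A,V}(\phi_k)\to \S_g^{\overline{\infty}}(\Gw)=\S_g(\Om)$ (density of compactly supported functions in $\mathcal{D}^{1,p}_{A,V^+}(\Om\setminus\overline{\Om_i})$ lets us take $\phi_k$ continuous with compact support). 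For arbitrary $\varepsilon>0$ pick $g_\varepsilon\in\mathcal{H}_p(\Om,V)\cap L^\infty_c(\Om)$ with $\|g-g_\varepsilon\|_{\mathcal{H}_p(\Om,V)}<\varepsilon$. Since $\supp g_\varepsilon$ is compact while $\supp\phi_k$ eventually leaves every compact subset, $\int_\Om|g_\varepsilon||\phi_k|^p\dx=0$ for all large $k$, and the reverse triangle inequality together with Theorem~\ref{Thm:Char} applied to the weight $|g-g_\varepsilon|$ then yields
\begin{equation*}
1 \,=\, \int_\Om |g||\phi_k|^p\dx \,\leq\, \int_\Om |g-g_\varepsilon||\phi_k|^p\dx \,\leq\, C_H\,\|g-g_\varepsilon\|_{\mathcal{H}_p(\Om,V)}\, Q_{p,A,V}(\phi_k) \,<\, C_H\,\varepsilon\,Q_{p,A,V}(\phi_k).
\end{equation*}
Letting $k\to\infty$ forces $1\leq C_H\,\varepsilon\,\S_g(\Om)$, which is absurd for $\varepsilon<(C_H\S_g(\Om))^{-1}$ (note $\S_g(\Om)\in(0,\infty)$ because $g\neq 0$ and $g\in\mathcal{H}_p(\Om,V)$).

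For the second assertion, I would invoke the optimal-Hardy-weight construction of \cite{DFP,DP}: when $Q_{p,A,V}$ is subcritical and either $p=2$ or $V\equiv 0$, one obtains, starting from positive minimal-growth solutions of $Q'_{p,A,V}[w]=0$ near infinity (supplied by subcriticality), an explicit $g\in\mathcal{H}_p(\Om,V)$ such that $Q_{p,A,V-\S_g(\Om)|g|}$ is simultaneously critical and null-critical in $\Gw$ and such that the identity $\S_g(\Om)=\S_g^{\overline{\infty}}(\Gw)$ (optimality at infinity) is part of the package. The first assertion then gives $g\notin\mathcal{H}_{p,0}(\Om,V)$, proving $\mathcal{H}_p(\Om,V)\neq\mathcal{H}_{p,0}(\Om,V)$.

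The main obstacle is the existence half: one needs to exhibit a concrete Hardy-weight whose best constant is saturated at infinity. This is exactly what the dichotomy $p=2$ or $V\equiv 0$ enables --- for $p=2$, the quadratic structure and the ground-state transform linearize the supersolution construction, while for $V\equiv 0$, the $p$-homogeneity and the absence of a zero-order term permit the explicit comparison estimates on which the null-criticality and optimality-at-infinity claims rest. For general $p\neq 2$ with a non-trivial sign-changing $V$, such a construction is not available in the present literature, which is precisely why the hypothesis $p=2$ or $V\equiv 0$ appears in the statement.
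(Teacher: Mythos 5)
Your proof is correct and takes essentially the same approach as the paper: the existence part is handled identically, by citing the optimal Hardy-weights of \cite{DFP,DP} (the paper also invokes \cite{V}), and the non-membership part rests on the same two ingredients, namely the norm equivalence of Theorem~\ref{Thm:Char} and the fact that a compactly supported approximant $g_\varepsilon$ cannot see the mass of $g$ near infinity, which the hypothesis $\S_g(\Om)=\S_g^{\overline{\infty}}(\Gw)$ guarantees carries the full Hardy norm. The only (cosmetic) difference is that the paper works directly with the restricted norms, using $\|g-g_\varepsilon\|_{\mathcal{H}_p(\Om\setminus\Om_n,V)}=\|g\|_{\mathcal{H}_p(\Om\setminus\Om_n,V)}=\mathcal{B}_g(\Om\setminus\Om_n,V)=1$, whereas you unfold the same estimate through near-minimizing test functions supported in $\Om\setminus\overline{\Om_{i_k}}$.
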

\begin{proof}
	Without loss of generality, we may assume that $g \geq 0$ and $\S_g(\Om) =1$. Further, by Theorem~\ref{Thm:Char}, we may take $\mathcal{B}_g(\Om,V)$  as the norm on ${\mathcal{H}_{p}}(\Om,V)$. Suppose that there is a nonnegative  sequence  $\mathcal{H}_{p}(\Om,V)\cap L^{\infty}_c(\Om)$ such that 
	$$\lim_{n\to \infty}\|g_n-g\|_{\mathcal{H}_{p}(\Gw,V)} =0.$$  
	Consider an exhaustion $(\Gw_n)$ of $\Gw$ satisfying  $\supp {g_n}\Subset \Gw_n$. Recall that $\mathcal{B}_g(\Gw_1,V) \leq \mathcal{B}_g(\Gw_2,V)$ if $\Gw_1 \subset \Gw_2$. Therefore, in $\Gw^*_n:=\Gw\setminus \Gw_n$, we have 
$$\|g_n-g\|_{\mathcal{H}_{p}(\Gw,V)} \geq \|g_n-g\|_{\mathcal{H}_{p}(\Gw^*_n,V)}=\|g\|_{\mathcal{H}_{p}(\Gw^*_n,V)}=\mathcal{B}_g(\Gw^*_n,V) =1,$$ 
which contradicts the  assumption on $(g_n)$.

Now, if $p=2$ or $V=0$, then the optimal Hardy-weights $W$ constructed in \cite{DFP,DP,V} satisfy the assumption 
$\S_W(\Om) = \S_W^{\overline{\infty}}(\Gw)$. 
\end{proof}
For  $x\in \overline\Gw$ and $g\in \mathcal{H}_{p}(\Gw,V)$, define   
	$$ \S_{g}(x,\Gw) :=  \lim_{r\ra 0} \inf \left \{Q_{p,A,V}(\phi) \bigm|  \phi \in \mathcal{D}^{1,p}_{A,V^+} (\Om \cap B_r(x)), \ \int_{\Om \cap B_r(x)}\!\! |g||\phi|^p  \dx =1 \right \},$$
and let 
$\Gs_g: = \{x \in \overline\Gw \mid \S_g(x,\Gw) < \infty \}$, and $\S_g^*(\Gw):= \inf_{x \in \overline\Gw} \S_g(x,\Gw)$.
The following lemma claims that if $g \!\in \! \mathcal{M}^{q}_{\loc}(p;\Om)$, then  $\Gs_g \cap \Om \!=\!\phi$.
\begin{lemma}\label{lem-Gsg-0}
	Let $g \in \mathcal{M}^{q}_{\loc}(p;\Om)$, then for any $x\in \Gw$ we have $\S_{g}(x,\Gw)=\infty$. In particular,  $\Gs_g \cap \Om=\phi$. 
\end{lemma}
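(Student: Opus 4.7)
My plan is to establish a uniform lower bound on $Q_{p,A,V}(\phi)$ on the normalized set that blows up as the ball shrinks, by two coupled applications of Morrey--Adams (Proposition~\ref{Prop:MA})---one to cope with the constraint $\int |g||\phi|^p\dx=1$, and one to absorb the indefinite part $V^-$. The key auxiliary input is the scaled Poincar\'e inequality $\|\phi\|_{L^p(B_r)}^p\le C_P r^p\|\nabla\phi\|_{L^p(B_r)}^p$ for $\phi\in W^{1,p}_0(B_r)$, which provides the $r$-decay that makes both Morrey--Adams estimates collapse.

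Fix $x\in\Gw$ and choose $r_0>0$ with $\overline{B_{r_0}(x)}\subset\Gw$, so $\Gw\cap B_r(x)=B_r(x)$ for all $r\le r_0$. Let $\theta_0:=\|\theta\|_{L^\infty(B_{r_0}(x))}<\infty$, where $\theta$ is a local uniform ellipticity function for $A$, giving $|\nabla\phi|_A^p\ge\theta_0^{-p}|\nabla\phi|^p$ on $B_{r_0}(x)$. By Remark~\ref{Rmk:Morrey}-$(ii)$, the Morrey norms $\|g\|_{\mathcal{M}^q(p;B_r)}$ and $\|V^-\|_{\mathcal{M}^q(p;B_r)}$ are bounded by their (finite) values on $B_{r_0}(x)$, uniformly in $r\le r_0$. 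The space $\mathcal{D}^{1,p}_{A,V^+}(B_r(x))$, defined as the completion of $W^{1,p}(B_r(x))\cap C_c(B_r(x))$ in a norm at least as strong (via local ellipticity and $V^+\ge 0$) as the Dirichlet norm of $W^{1,p}_0(B_r(x))$, embeds into $W^{1,p}_0(B_r(x))$, so by approximation it suffices to bound $Q_{p,A,V}(\phi)$ from below on test functions $\phi\in W^{1,p}_0(B_r(x))$ with $\int|g||\phi|^p\dx=1$.

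Given $M>0$, take $\delta:=1/(4M)$ and apply Proposition~\ref{Prop:MA} to $|g|$; combining with Poincar\'e yields
\begin{equation*}
1=\int_{B_r(x)}|g||\phi|^p\dx\le\Bigl(\delta+C_\delta\,\|g\|_{\mathcal{M}^q(p;B_{r_0})}^{pq/(pq-N)}\,C_P\,r^p\Bigr)\|\nabla\phi\|_{L^p(B_r)}^p,
\end{equation*}
and for $r$ small (depending on $M$ but not on $\phi$) the second summand in the parentheses drops below $\delta$, giving $\|\nabla\phi\|_{L^p(B_r)}^p\ge 2M$. A parallel application of Proposition~\ref{Prop:MA} to $V^-$ with the fixed choice $\delta_1:=\theta_0^{-p}/4$ and Poincar\'e gives, for $r$ small enough, $\int_{B_r}V^-|\phi|^p\dx\le\tfrac12\theta_0^{-p}\|\nabla\phi\|_{L^p(B_r)}^p$. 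Combining these,
\begin{equation*}
Q_{p,A,V}(\phi)\ge\theta_0^{-p}\|\nabla\phi\|_{L^p(B_r)}^p-\int_{B_r}V^-|\phi|^p\dx\ge\tfrac12\theta_0^{-p}\|\nabla\phi\|_{L^p(B_r)}^p\ge\theta_0^{-p}M.
\end{equation*}
Since $M$ was arbitrary, the infimum defining $\S_g(x,\Gw)$ tends to $+\infty$ as $r\to 0$, proving $\S_g(x,\Gw)=\infty$, and hence $\Gs_g\cap\Gw=\emptyset$.

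The only delicate point is the balancing of the Morrey--Adams parameters against the Poincar\'e $r^p$-factor in both applications; it relies on the interiority of $x$ (so that $B_r(x)\Subset\Gw$ and Poincar\'e holds with a fixed constant) and on the local Morrey control of $g$ and $V^-$. The case $p\ge N$ is handled identically, using the logarithmic form of the Morrey norm provided by Proposition~\ref{Prop:MA}.
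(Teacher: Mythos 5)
Your proof is correct and follows essentially the same route as the paper: the $r^{-p}$ Poincar\'e scaling on $B_r(x)$ combined with two applications of the Morrey--Adams inequality (one for $|g|$ with $\delta$ tied to $M$, one for $V^-$ with $\delta$ tied to the local ellipticity constant). The only difference is organizational---the paper shows $Q_{p,A,V}(\phi)-M\int|g||\phi|^p\dx\ge C_1r^{-p}\|\phi\|_{L^p}^p>0$ directly, whereas you first extract $\|\nabla\phi\|_{L^p}^p\ge 2M$ from the normalization and then bound $Q_{p,A,V}$---but the substance is identical.
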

\begin{proof}
	It is well known \cite[Theorem~13.19]{Leoni} that
	\begin{align} \label{estimate:ev}
	C_0r^{-p}=\inf \left\{\frac{\int_{B_r(0)}|\nabla \phi|^p \dx}{\int_{B_r(0)}|\phi|^p \dx} \, \, \biggm|  \phi \in W^{1,p}(B_r(0)) \cap C_c(B_r(0)) \setminus \{0\} \right\} ,
	\end{align}
	for some $C_0>0.$
	Let $g, V \in \mathcal{M}^{q}_{\text{loc}}(p;\Om)$ and $x \in \Om$. Choose $r>0$ such that $B_r:=B_r(x) \Subset \Om$. Then, by Adams-Morrey inequality (Proposition \ref{Prop:MA}), for any $\vge, \delta >0$ there exists $C(N,p,q),>0$ such that for any $\phi \in W^{1,p}(B_r) \cap C_c(B_r)$
	\begin{align} \label{Req:2}
	\int_{B_r} |V| |\phi|^p \dx \leq \delta \int_{B_r}  |\nabla \phi|^p \dx + \frac{C(N,p,q)}{\delta^{\frac{N}{pq-N}}} \|V\|_{\M^{q}(p;B_r)}^{\frac{pq}{pq-N}} \int_{B_r} |\phi|^p \dx \,,
	\end{align}
	and \begin{align} \label{Req:3}
	\int_{B_r} |g| |\phi|^p \dx \leq \vge \int_{B_r}  |\nabla \phi|^p \dx + \frac{C(N,p,q)}{\vge^{\frac{N}{pq-N}}} \|g\|_{\M^{q}(p;B_r)}^{\frac{pq}{pq-N}} \int_{B_r} |\phi|^p \dx.
	\end{align}
	Now, for any $M>0$, choose $\vge=\frac{\theta_{B_1}}{2M} $, where $\theta_{B_1}>0$ is the local uniform ellipticity constant of $A$ on $B_1$. Then, by \eqref{Req:3}, we have
	\begin{align} \label{Req:4}
	\int_{B_r} |g| |\phi|^p \dx \leq \frac{\theta_{B_1}}{2M} \int_{B_r}  |\nabla \phi|^p \dx + \frac{C(N,p,q)}{[\frac{\theta_{B_1}}{2M}]^{\frac{N}{pq-N}}}  \|g\|_{\M^{q}(p;B_r)}^{\frac{pq}{pq-N}} \int_{B_r} |\phi|^p \dx.
	\end{align}
	Use the local uniform ellipticity of $A$, \eqref{Req:2}, and \eqref{Req:4} to obtain
	\begin{align*}
	& \, \int_{B_r} |\nabla \phi|_A^p \dx + \int_{B_r} V|\phi|^p \dx -M \int_{B_r} |g| |\phi|^p \dx 
	\geq  \left[\frac{\theta_{B_1}}{2}-\delta \right] \int_{B_r} |\nabla \phi|^p \dx\\
	&   \qquad \qquad   - \frac{C(N,p,q)}{\delta^{\frac{N}{pq-N}}} \|V\|_{\M^{q}(p;B_r)}^{\frac{pq}{pq-N}} \int_{B_r} |\phi|^p \dx   - \frac{MC(N,p,q)}{[\frac{\theta_{B_1}}{2M}]^{\frac{N}{pq-N}}}  \|g\|_{\M^{q}(p;B_r)}^{\frac{pq}{pq-N}} \int_{B_r} |\phi|^p \dx \\
	& \geq \left( C_0 r^{-p}\left[\frac{\theta_{B_1}}{2}-\delta \right]   -\frac{C(N,p,q)}{\delta^{\frac{N}{pq-N}}} \|V\|_{\M^{q}(p;B_1)}^{\frac{pq}{pq-N}}    - \frac{MC(N,p,q)}{\big[\frac{\theta_{B_1}}{2M}\big]^{\frac{N}{pq-N}}}  \|g\|_{\M^{q}(p;B_1)}^{\frac{pq}{pq-N}} \right)  \|\phi\|_{L^p(B_r)}^p,
	\end{align*}
	where the latter inequality follows from \eqref{estimate:ev} and Remark \ref{Rmk:Morrey}-$(ii)$.
	Choosing $0<\delta < \frac{\theta_{B_1}}{2}$, it follows that for any $M>0$ there exists $C_1>0$ such that the right hand side of the above inequality is greater than  $C_1 r^{-p}\|\phi\|_{L^p(B_r)}^p$  for sufficiently small $r\!>\!0$. Hence,  $\S_g(x,\Om)\!=\! \infty.$ 
\end{proof}
\section{Attainment of the best constant II}
In this section, we use concentration compactness arguments to provide another sufficient condition for the best constant in \eqref{WH} to be attained in $\DpAV$.

Throughout this section we assume the following condition on $V$:
$${\bf{(H1)}}: V \in \mathcal{M}_{{\rm loc}}^{q}(p; \Om) \ \text{is such that} \ V^- \in {\mathcal{H}_{p,0}(\Om,V^+)} .$$
By Theorem~\ref{Thm:compactness}, if (H1) holds, then $T_{V^-}$ is compact in $\DpAV$. In addition, by \eqref{WHV3}, we have
$$\frac{1}{\mathcal{B}_g(\Om)}= \S_g(\Om)=\inf \{Q_{p,A,V}(\phi) \bigm| \phi \in \DpAV \,, \int_{\Om} |g| |\phi|^p \dx =1\} \,.$$
 Let $\bigcup_n \Om_n= \Om$ be a compact smooth exhaustion of $\Om$. Recall the definitions of $\S_{g}(x,\Gw)$, $\S_g^*(\Gw)$,  and $\Gs_g$. Motivated by \cite{Smets,Tertikas}, we define 
\begin{align*}
\S_{g}^{\infty}(\Gw)  :=  \lim_{R\ra \infty} \inf \left \{Q_{p,A,V}(\phi) \bigm|  \phi \in \mathcal{D}^{1,p}_{A,V^+} (\Om \cap \overline{B_R}^c), \ \int_{\Om} |g||\phi|^p  \dx =1 \right \} ,\\
\end{align*}
 Clearly, $\S_g(\Om)\leq \min\{\S_g^*(\Gw), \S_g^{\infty}(\Gw)\}$.   We make the following assumption on $g$:
$${\bf{(H2)}}:  g \in \mathcal{H}_p(\Om, V)  \text{ with }   \left|\overline{\Gs_g}\right|=0   .$$
Now we state our result.
\begin{theorem} \label{bestconst_thm}
	Let $g, V$ satisfy {\rm{(H1), (H2)}}  with $\S_g(\Om) < \min\{\S_g^*(\Gw),\S_g^{\infty}(\Gw)\}$. Then $\mathcal{B}_g(\Om)$ is attained in $\DpAV$.
\end{theorem}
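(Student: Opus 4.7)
The plan is to apply the concentration compactness principle of P.~L.~Lions, adapted to the Beppo-Levi space $\DpAV$, the sign-changing potential $V$, and the weighted inequality \eqref{WH}, in the spirit of \cite{AU,Smets,Tertikas}. Take a minimizing sequence $(\phi_n) \subset \DpAV$ with $\int_\Gw |g||\phi_n|^p \dx = 1$ and $Q_{p,A,V}(\phi_n) \to \S_g(\Gw)$. Writing $Q_{p,A,V}(\phi) = \|\phi\|^p_{\DpAV} - \int_\Gw V^-|\phi|^p \dx$ and using the compactness of $T_{V^-}$ on $\DpAV$ granted by (H1) and Theorem~\ref{Thm:compactness}, standard arguments (a rescaling/contradiction in the uniformly convex $\DpAV$, together with subcriticality of $Q_{p,A,V}$) yield boundedness of $(\phi_n)$ in $\DpAV$. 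Passing to a subsequence, $\phi_n \wra \phi$ in $\DpAV$, and compactness gives $\int_\Gw V^-|\phi_n|^p \dx \to \int_\Gw V^-|\phi|^p \dx$.

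Next, consider the nonnegative Radon measures on $\overline\Gw$,
$$\mu_n := \bigl(|\nabla\phi_n|_A^p + V^+|\phi_n|^p\bigr)\dx, \qquad \nu_n := |g||\phi_n|^p \dx,$$
and, along a further subsequence, assume $\mu_n \wrastar \mu$ and $\nu_n \wrastar \nu$ as Radon measures, introducing the escaping masses
$\mu_\infty := \lim_{R\to\infty}\limsup_n \mu_n(\Gw\setminus B_R)$ and $\nu_\infty := \lim_{R\to\infty}\limsup_n \nu_n(\Gw\setminus B_R)$. A Lions-type concentration compactness lemma tailored to \eqref{WH}, with the local quantity $\S_g(x,\Gw)$ replacing the usual Sobolev constant, then produces the decomposition
$$\nu = |g||\phi|^p \dx + \sum_{j\in J}\nu_j \delta_{x_j}, \qquad \S_g(x_j,\Gw)\,\nu_j \leq \mu(\{x_j\}), \qquad \S_g^\infty(\Gw)\,\nu_\infty \leq \mu_\infty,$$
with $J$ at most countable, $x_j \in \overline\Gw$, $\nu_j > 0$. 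Each $x_j$ must lie in $\overline{\Gs_g}$ (otherwise $\S_g(x_j,\Gw)=\infty$ forces $\nu_j=0$), while hypothesis (H2), $|\overline{\Gs_g}|=0$, rules out any additional diffuse singular part of $\nu$.

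Summing the limiting masses, $1 = \int_\Gw |g||\phi|^p \dx + \sum_j \nu_j + \nu_\infty$. On the other hand, weak lower semicontinuity of $\phi\mapsto \int_\Gw(|\nabla\phi|_A^p+V^+|\phi|^p)\dx$, the decomposition of $\mu$, the strong convergence of $\int V^-|\phi_n|^p\dx$, and the Hardy inequality \eqref{WHV3} applied to $\phi$ give
$$\S_g(\Gw) \geq Q_{p,A,V}(\phi) + \sum_j \mu(\{x_j\}) + \mu_\infty \geq \S_g(\Gw)\!\int_\Gw\!|g||\phi|^p\dx + \sum_j \S_g(x_j,\Gw)\,\nu_j + \S_g^\infty(\Gw)\,\nu_\infty.$$
The strict spectral gap $\S_g(\Gw) < \min\{\S_g^*(\Gw),\S_g^\infty(\Gw)\}$, combined with the unit mass identity, forces $\nu_j=0$ for all $j\in J$ and $\nu_\infty=0$. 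Hence $\int_\Gw|g||\phi|^p\dx = 1$ and $Q_{p,A,V}(\phi) \leq \S_g(\Gw)$, so $\S_g(\Gw)$ is attained at $\phi \in \DpAV$ and $\mathcal{B}_g(\Gw)=1/\S_g(\Gw)$ is attained as claimed.

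The main obstacle is establishing the concentration compactness decomposition in this non-standard setting: the matrix $A$ and indefinite $V$ destroy translation and scaling invariance, the domain $\Gw$ is arbitrary so concentration may occur at points of $\partial\Gw$ as well as at infinity, and the usual profile decomposition techniques are unavailable. Hypothesis (H1) is used precisely to reduce the $V^-$ contribution to a compact perturbation so that only the nonnegative energy $Q_{p,A,V^+}$ drives concentration, while (H2) forces the singular part of $\nu$ to be purely atomic and supported on the Lebesgue null set $\overline{\Gs_g}$; given the decomposition, the strict spectral gap is precisely the Brezis--Lieb-type obstruction that prevents concentration.
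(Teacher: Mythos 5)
Your overall strategy is the one the paper uses (a concentration--compactness argument in the spirit of Smets/Tertikas/Anoop--Das, with $\S_g(x,\Gw)$ and $\S_g^{\infty}(\Gw)$ playing the role of the local and asymptotic Sobolev constants, (H1) reducing $V^-$ to a compact perturbation, and the strict spectral gap killing the lost masses). However, two of the steps you treat as routine are precisely the ones that need work, and one of them is mis-stated.

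First, boundedness of the minimizing sequence. With your normalization $\int_\Gw |g||\phi_n|^p\dx=1$ and $Q_{p,A,V}(\phi_n)\to\S_g(\Gw)$, boundedness in $\DpAV$ is \emph{not} automatic: writing $Q_{p,A,V}(\phi_n)=\|\phi_n\|_{\DpAV}^p-\int_\Gw V^-|\phi_n|^p\dx$, the only a priori control on the subtracted term is \eqref{WHV_}, i.e. $\int_\Gw V^-|\phi|^p\dx\le\|\phi\|_{\DpAV}^p$ with constant exactly $1$, which is compatible with $\|\phi_n\|_{\DpAV}\to\infty$ while the difference stays bounded. Your appeal to ``rescaling/contradiction plus subcriticality'' would have to rule out a normalized weak limit $\psi$ with $\|\psi\|_{\DpAV}=1$, $Q_{p,A,V}(\psi)=0$ and $\int_\Gw|g||\psi|^p\dx=0$, and subcriticality alone does not obviously exclude this in $\DpAV$. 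The paper sidesteps the issue entirely by exploiting homogeneity of the Rayleigh quotient and normalizing $\int_\Gw[|\nabla u_n|_A^p+V^+|u_n|^p]\dx=1$ instead, so that the minimizing sequence is bounded by construction; you should do the same.

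Second, the ``Lions-type decomposition'' $\nu=|g||\phi|^p\dx+\sum_j\nu_j\delta_{x_j}$ is not justified here and is not generally true. Atomicity of the singular part in the classical lemma comes from a reverse H\"older inequality between $\nu$ and $\mu$ with \emph{different} exponents ($p^*>p$); in the present setting the only available comparison is $\nu(E)\le\mathcal{B}_g(\Gw)\,\Gamma(E)$, which has the same homogeneity on both sides, so the singular part of $\nu$ may be diffuse. (Hypothesis (H2) does not rescue atomicity either: a Lebesgue-null set such as $\overline{\Gs_g}$ can carry non-atomic measures.) What is actually true, and what the paper proves (Lemma~\ref{lem:abs_cont}), is that the defect measure $\nu$ (the weak-$*$ limit of $|g||u_n-u|^p\dx$, obtained via Brezis--Lieb rather than by decomposing the limit of $|g||u_n|^p\dx$) satisfies $\nu\ll\Gamma$ with Radon--Nikodym density bounded by $1/\S_g(x,\Gw)$, hence is supported on $\overline{\Gs_g}$ and obeys $\S_g^*(\Gw)\,\nu\le\Gamma$; combined with Step~2 of Lemma~\ref{lemma:esstimate} (which uses $|\overline{\Gs_g}|=0$ to identify $\widetilde\Gamma$ with $\Gamma$ on $\overline{\Gs_g}$ and to separate the supports of the absolutely continuous and singular parts), this yields $\widetilde\Gamma\ge\big[|\nabla u|_A^p+V|u|^p\big]\dx+\S_g^*(\Gw)\nu$. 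Your final inequality chain survives if you replace $\sum_j\S_g(x_j,\Gw)\nu_j$ by $\S_g^*(\Gw)\|\nu\|$ throughout, so the argument is salvageable, but as written the key lemma is both unproven and stated in a form that fails in this setting.
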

In order to prove Theorem \ref{bestconst_thm}, we need a $(g,V)$-depended concentration compactness lemma. For $V = 0$ and $A=I_{N\times N}$, analogous results are obtained in \cite{AU,Smets,Tertikas}.
\subsection{A variant of concentration compactness lemma}
Let $\mathbb{M}(\R^N)$ be the space of all Radon measures (i.e., regular, finite,  Borel signed-measures). Recall that $\mathbb{M} (\R^N)$ is a Banach space with respect to the norm $\norm{\mu}=|\mu|(\R^N)$ (total variation of the measure $\mu$). By the Riesz representation theorem \cite[Theorem 14.14]{Border}, $\mathbb{M}(\R^N)$ is the dual of $\text{C}_0(\R^N)$ := $\overline{\text{C}_c(\R^N)}$ in $L^{\infty}(\R^N)$. A  sequence $(\mu_n)$ is said to be {\em weak* convergent} to $\mu$ in $\mathbb{M}(\R^N)$ if
 \begin{eqnarray*}
  \lim_{n \ra \infty}\int_{\R^N} \phi \dmu_n =  \int_{\R^N} \phi \dmu \qquad     \forall\,   \phi \in \text{C}_0(\R^N),
 \end{eqnarray*}
 and we denote $\mu_n \wrastar \mu$. It follows from the Banach-Alaoglu theorem  that if $(\mu_n)$ is a bounded sequence in $\mathbb{M}(\R^N)$, then (up to a subsequence) there exists $\mu \in \mathbb{M}(\R^N)$ such that $\mu_n \overset{\ast}{\rightharpoonup} \mu$.

 A function in $\DpAV$ can be considered as a function in $\mathcal{D}^{1,p}_{A,V^+}(\R^N)$ by its zero extension.  Following this convention,  for $u_n,u \in \DpAV$ and a Borel set  $ E $ in   $\R^N,$ we  define the following sequences of measures: \begin{align*}
 \nu_n(E):=\int_E |g||u_n - u|^p \dx, & \qquad \Ga_n(E):=\int_E(|\nabla(u_n-u)|_A^p + V |u_n-u|^p)\dx , \\
 \widetilde{\Ga}_n(E) & :=\int_E (|\nabla u_n|_A^p + V |u_n|^p)\dx \,,
 \end{align*}
 where $g \in \mathcal{H}_p(\Om,V)$.
If $u_n\wra u$ in $\DpAV$, then by assumptions (H1) and (H2), the sequences $\nu_n$, $\Gamma_n$, and $\widetilde{\Gamma}_n$
  have weak* convergent subsequences. Let
  \[\nu_n \wrastar \nu; \qquad \Gamma_n \overset{\ast}{\rightharpoonup} \Gamma; \qquad \widetilde{\Gamma}_n \overset{\ast}{\rightharpoonup} \widetilde{\Gamma}  \quad \mbox{ in } \mathbb{M}(\R^N).\]
\begin{proposition} \label{Prop:ineq}
 Let $\vge  >0$. Then, for $p \in (1,\infty)$, there exists $C(\vge ,p)>0$ such that
  $$||a+b|_A^p-|a|_A^p| \leq \vge  |a|_A^p + C(\vge ,p) |b|_A^p \qquad   \forall a,b \in \R^N.$$ 
\end{proposition}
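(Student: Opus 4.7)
This is a standard convexity estimate for the $A$-norm, and my strategy is to reduce the vector inequality to a scalar one via the triangle inequality, since for each $x$ the form $|\cdot|_A = \sqrt{\langle A(x)\cdot,\cdot\rangle}$ is a genuine norm on $\R^N$ (by symmetry and positive-definiteness of $A(x)$), and in particular satisfies $|a+b|_A\le |a|_A+|b|_A$.

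\textbf{Step 1: Scalar inequality.} I would first establish the elementary fact that for every $p\in(1,\infty)$ and every $\varepsilon>0$ there exists $C(\varepsilon,p)>0$ such that
\[
(s+t)^p \;\le\; (1+\varepsilon)\,s^p + C(\varepsilon,p)\,t^p \qquad \forall s,t\ge 0.
\]
The slickest route is the Young-type decomposition: for any $\delta\in(0,1)$, convexity of $u\mapsto u^p$ applied to the convex combination $s+t=\delta\cdot(s/\delta)+(1-\delta)\cdot(t/(1-\delta))$ yields
\[
(s+t)^p \le \delta^{1-p}\,s^p+(1-\delta)^{1-p}\,t^p.
\]
Choosing $\delta=(1+\varepsilon)^{-1/(p-1)}$ makes the $s^p$-coefficient equal to $1+\varepsilon$, and $C(\varepsilon,p):=(1-\delta)^{1-p}$ does the job.

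\textbf{Step 2: Upper estimate.} Applying Step 1 with $s=|a|_A$ and $t=|b|_A$ together with the triangle inequality gives
\[
|a+b|_A^p \;\le\;(|a|_A+|b|_A)^p\;\le\;(1+\varepsilon)|a|_A^p+C(\varepsilon,p)|b|_A^p,
\]
hence $|a+b|_A^p-|a|_A^p\le \varepsilon|a|_A^p+C(\varepsilon,p)|b|_A^p$.

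\textbf{Step 3: Lower estimate.} Writing $a=(a+b)+(-b)$ and applying the same scalar inequality but now with a smaller parameter $\varepsilon'>0$ (to be chosen) in place of $\varepsilon$, I get
\[
|a|_A^p \;\le\; (1+\varepsilon')|a+b|_A^p+C(\varepsilon',p)|b|_A^p,
\]
so that
\[
|a|_A^p-|a+b|_A^p\;\le\;\frac{\varepsilon'}{1+\varepsilon'}|a|_A^p+\frac{C(\varepsilon',p)}{1+\varepsilon'}|b|_A^p.
\]
Choosing $\varepsilon'=\varepsilon/(1-\varepsilon)$ when $\varepsilon<1$ (and using the case $\varepsilon=1/2$ together with monotonicity of the estimate when $\varepsilon\ge 1$) produces a factor $\varepsilon$ in front of $|a|_A^p$ and some adjusted constant $\widetilde C(\varepsilon,p)$ in front of $|b|_A^p$. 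Combining Steps 2 and 3 (taking the larger of the two constants) gives the required two-sided bound with a single constant $C(\varepsilon,p)$.

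\textbf{Anticipated difficulty.} There is no real obstacle: the whole content is the scalar lemma of Step~1 plus two applications of the triangle inequality. The only mildly subtle point is the asymmetry in Step~3, where the direct application places $\varepsilon$ in front of $|a+b|_A^p$ rather than $|a|_A^p$; this is why I would run the argument with the parameter $\varepsilon'$ and absorb the extra factor, which is routine.
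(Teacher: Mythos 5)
Your proof is correct, but it follows a genuinely different route from the paper's. The paper quotes the two-sided scalar inequality $\bigl||s+t|^p-|s|^p\bigr|\le \vge|s|^p+C(\vge,p)|t|^p$ from Lieb--Loss, upgrades it to the parameterized form $\bigl||s^2+2\theta s+1|^{p/2}-s^p\bigr|\le\vge|s|^p+C(\vge,p)$ for $\theta\in[-1,1]$, and then exploits the inner-product structure of $|\cdot|_A$ via the law of cosines, substituting $s=|a|_A/|b|_A$ and $\theta=\langle A(x)b,a\rangle/(|a|_A|b|_A)$ (which tacitly requires handling $a=0$ or $b=0$ separately). You instead prove from scratch the one-sided convexity estimate $(s+t)^p\le(1+\vge)s^p+C(\vge,p)t^p$ for $s,t\ge0$ and apply it twice with the triangle inequality, once as $|a+b|_A\le|a|_A+|b|_A$ and once as $|a|_A\le|a+b|_A+|b|_A$, with the correct bookkeeping of the parameter $\vge'$ in the second direction. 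Your argument is more self-contained (no external citation needed), avoids the degenerate cases of the normalization, and uses only that $|\cdot|_A$ is a norm rather than that it arises from an inner product, so it would apply verbatim to more general Finsler-type structures; the paper's argument buys a quicker derivation once the scalar Lieb--Loss inequality is taken as known, since that inequality already comes in two-sided form.
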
 
\begin{proof}
Let $\vge  >0$ and $p \in (1,\infty)$. Then there exists   $C(\vge ,p)>0$ such that 
\begin{equation} \label{inequality}
\left||s+t|^p-|s|^p \right|\leq \vge  |s|^p + C(\vge ,p)|t|^p \qquad   \forall s,t \in \R
\end{equation} 
(see \cite{Lieb}, page 22). Now, for any $\theta \in [-1,1]$,  using \eqref{inequality} we obtain
  $$||s^2+2\theta s+1|^{\frac{p}{2}} -s^p| \leq \vge  |s|^p + C(\vge ,p) \qquad   \forall s \in \R$$
for some $C(\vge ,p)>0$. By taking $s=\frac{|a|_A}{|b|_A}$ and $\theta = \frac{<A(x) b,a>}{|a|_A|b|_A}$ we obtain our claim.
\end{proof}
\begin{lemma} \label{uineq}
  Let $\Phi \in C_b^1(\Om)$ be such that $\nabla \Phi$ has compact support and $(u_n) \in W^{1,p}(\Om) \cap C_c(\Om)$ be such that $u_n \wra u$ in $\DpAV$. Then
  \[ \lim_{n \ra \infty} \int_{\Om} |\nabla((u_n -  u)\Phi)|_A^p \dx = \lim_{n \ra \infty} \int_{\Om} |\nabla(u_n -  u)|_A^p |\Phi|^p \dx.\]
 \end{lemma}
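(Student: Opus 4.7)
Set $v_n:=u_n-u$, so $v_n\wra 0$ in $\DpAV$ and, by reflexivity, $(v_n)$ is bounded in $\DpAV$; in particular $M:=\sup_n\int_{\Om}|\nabla v_n|_A^p\dx<\infty$. The product rule gives $\nabla(v_n\Phi)=\Phi\,\nabla v_n+v_n\nabla\Phi$, so the difference that must tend to zero is
\begin{equation*}
I_n:=\int_{\Om}\Bigl||\Phi\nabla v_n+v_n\nabla\Phi|_A^{p}-|\Phi|^p|\nabla v_n|_A^{p}\Bigr|\dx.
\end{equation*}
Since $|\Phi\nabla v_n|_A^p=|\Phi|^p|\nabla v_n|_A^p$, I plan to apply Proposition~\ref{Prop:ineq} pointwise with $a=\Phi(x)\nabla v_n(x)$ and $b=v_n(x)\nabla\Phi(x)$. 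For any $\vge>0$ this yields
\begin{equation*}
I_n\;\leq\;\vge\int_{\Om}|\Phi|^{p}|\nabla v_n|_A^{p}\dx+C(\vge,p)\int_{\Om}|v_n|^{p}|\nabla\Phi|_A^{p}\dx\;\leq\;\vge\,\|\Phi\|_\infty^{p}M+C(\vge,p)J_n,
\end{equation*}
where $J_n:=\int_{\Om}|v_n|^{p}|\nabla\Phi|_A^{p}\dx$.

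The core step is to show $J_n\to 0$. By assumption $K:=\overline{\supp\nabla\Phi}\Subset\Om$ is compact, and since $\Phi\in C_b^1(\Om)$ the factor $|\nabla\Phi|_A^{p}$ is bounded on $K$ (using the local bound on $A$). Hence $J_n\leq C_K\int_{K}|v_n|^{p}\dx$. Choose any smooth bounded open set $\widetilde{\Om}$ with $K\subset\widetilde{\Om}\Subset\Om$. By Remark~\ref{Remark:cpct}\,$(iii)$ (Rellich--Kondrachov on the bounded smooth set $\widetilde{\Om}$ combined with local uniform ellipticity), along a subsequence $v_n\to 0$ in $L^{p}(\widetilde{\Om})$; therefore $J_n\to 0$ along that subsequence, and a standard subsequence-of-subsequences argument promotes this to convergence of the whole sequence.

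Putting these together, $\limsup_{n\to\infty}I_n\leq\vge\,\|\Phi\|_\infty^{p}M$ for every $\vge>0$, so $I_n\to 0$. Consequently the two integrands differ by an amount whose integral vanishes in the limit, proving
\begin{equation*}
\lim_{n\to\infty}\int_{\Om}|\nabla((u_n-u)\Phi)|_A^{p}\dx=\lim_{n\to\infty}\int_{\Om}|\nabla(u_n-u)|_A^{p}|\Phi|^{p}\dx,
\end{equation*}
in the sense that one limit exists iff the other does, with equal values. The only nontrivial point is the localization/strong-convergence argument used to control $J_n$; the rest is an application of Proposition~\ref{Prop:ineq} together with boundedness of $\Phi$ and of $(v_n)$ in $\DpAV$.
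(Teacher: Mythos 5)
Your proof is correct and follows essentially the same route as the paper's: apply Proposition~\ref{Prop:ineq} pointwise to $a=\Phi\nabla(u_n-u)$, $b=(u_n-u)\nabla\Phi$, kill the term $\int|u_n-u|^p|\nabla\Phi|_A^p\dx$ by local Rellich--Kondrachov compactness on a bounded smooth neighborhood of $\supp\nabla\Phi$ (Remark~\ref{Remark:cpct}-$(iii)$), and let $\vge\to 0$ using the boundedness of $(u_n)$ in $\DpAV$. The only differences are presentational: you spell out the subsequence-of-subsequences step and the interpretation of the asserted equality of limits, which the paper leaves implicit.
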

 \begin{proof} 
  Let $\vge  >0$ be given. Using Proposition \ref{Prop:ineq},
  \begin{eqnarray*}
      &\bigg| \int_{\Om} |\nabla((u_n  -  u)\Phi)|_A^p \dx - \int_{\Om} |\nabla(u_n -  u)|_A^p |\Phi|^p  \dx\bigg|\\
               &\leq   \vge \int_{\Om} |\nabla(u_n -  u)|_A^p |\Phi|^p \dx + C(\vge,p) \int_{\Om} |u_n -  u|^p |\nabla \Phi|_A^p \dx. 
  \end{eqnarray*}
 Since $\nabla \Phi$ is compactly supported, it follows from a similar arguments as in Remark \ref{Remark:cpct}-$(iii)$ that $\int_{\Om} |u_n -  u|^p |\nabla \Phi|_A^p \dx \to 0$ as $n \ra \infty$. Further, as $(u_n)$ is bounded in $\DpAV$ and $\vge>0$ is arbitrary, we obtain the desired result.
 \end{proof}
   Next, we prove the absolute continuity of the measure $\nu$ with respect to $\Gamma$. 
  \begin{lemma}\label{lem:abs_cont}
 Let $V$ satisfy {\rm{(H1)}}, $g \in \mathcal{H}_p(\Om,V)$, and $u_n \in W^{1,p}(\Om) \cap C_c(\Om)$ be such that  $u_n \wra u$ in $\DpAV$.  Then,  for any Borel set $E$ in $\R^N$,
  \[\S^*_g \ \nu(E) \leq \Gamma (E), \ \mbox{where $\S^*_g = \displaystyle\inf_{x \in \overline{\Om}} \S_g(x)$ } .\]
   In particular, $\nu$ is supported on $\overline{\Sigma_g}$.
  \end{lemma}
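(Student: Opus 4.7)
The plan is to establish a localized version of the inequality at each point $x_0 \in \overline{\Gw}$ and then globalize it using a $p$-th power partition of unity. Specifically, I will show that for every $x_0 \in \overline{\Gw}$ and every $\ep > 0$ there exists $r_0 = r_0(x_0,\ep) > 0$ such that, for all $r \leq r_0$ and every nonnegative $\Phi \in C_c^\infty(B_r(x_0))$,
\[
(\S_g(x_0,\Gw) - \ep)\int \Phi^p \dnu \;\leq\; \int \Phi^p \dGamma .
\]
The definition of $\S_g(x_0,\Gw)$ as a monotone limit guarantees, for $r$ small enough, the localized Hardy-type inequality $(\S_g(x_0,\Gw) - \ep)\int |g||\phi|^p \dx \leq Q_{p,A,V}(\phi)$ for every $\phi \in \mathcal{D}_{A,V^+}^{1,p}(\Gw \cap B_r(x_0))$. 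Testing this with $\phi_n := (u_n - u)\Phi$ (which is admissible after approximating $u \in \DpAV$ by $W^{1,p}(\Gw) \cap C_c(\Gw)$ functions and multiplying by $\Phi$), and letting $n \to \infty$, the left hand side converges to $(\S_g(x_0,\Gw) - \ep)\int \Phi^p \dnu$ because $\Phi^p \in C_c(\R^N)$ and $\nu_n \wrastar \nu$, while
\[
Q_{p,A,V}((u_n-u)\Phi) = \int |\nabla((u_n-u)\Phi)|_A^p \dx + \int V|u_n-u|^p \Phi^p \dx
\]
equals $\int \Phi^p \dGamma_n + o(1)$ by Lemma~\ref{uineq} and hence converges to $\int \Phi^p \dGamma$ via $\Gamma_n \wrastar \Gamma$. (The weak$^*$ limit $\Gamma$ exists because hypothesis (H1) together with Theorem~\ref{Thm:compactness} yields compactness of $T_{V^-}$ on $\DpAV$, which in turn bounds the total variation $|\Gamma_n|(\R^N)$ uniformly in $n$.)

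To globalize, first extend the local inequality from $C_c^\infty$ to nonnegative $\Phi \in C_c(B_r(x_0))$ by mollification. For any nonnegative $\Psi \in C_c(\R^N)$, cover $\supp \Psi$ by finitely many such balls $B_{r_i}(x_i)$ on which the local inequality holds; take a smooth subordinate partition of unity $\{\phi_i\}$ with $\sum_i \phi_i = 1$ on $\supp \Psi$ and set $\chi_i := \phi_i^{1/p} \in C_c(B_{r_i}(x_i))$, so that $\sum_i \chi_i^p = 1$ on $\supp \Psi$. Applying the local inequality to each $\Psi\chi_i$, using $\S_g(x_i,\Gw) \geq \S_g^*$, and summing over $i$ yields $(\S_g^* - \ep)\int \Psi^p \dnu \leq \int \Psi^p \dGamma$. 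Letting $\ep \to 0$ and observing that every nonnegative $f \in C_c(\R^N)$ is of the form $\Psi^p$ with $\Psi = f^{1/p} \in C_c(\R^N)$ nonnegative, one concludes $\S_g^*\,\nu(E) \leq \Gamma(E)$ for every Borel set $E$ by standard Radon measure arguments. Finally, for $x \notin \overline{\Gs_g}$ there is an open neighborhood $U$ of $x$ on which $\S_g(y,\Gw) = +\infty$ for all $y \in U$; the local inequality then delivers $M\int \Phi^p \dnu \leq \int \Phi^p \dGamma$ for arbitrary $M > 0$ and every nonnegative $\Phi \in C_c(U)$, forcing $\nu(U) = 0$ since $\Gamma$ has finite total variation on compact subsets of $\overline{U}$, so $\supp \nu \subset \overline{\Gs_g}$.

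The main anticipated obstacles are (i) justifying that $(u_n - u)\Phi$ is a legitimate local test function in $\mathcal{D}_{A,V^+}^{1,p}(\Gw \cap B_r(x_0))$, which requires a two-step approximation (first approximate $u$ by elements of $W^{1,p}(\Gw)\cap C_c(\Gw)$, then multiply by the fixed $\Phi \in C_c^\infty(B_r(x_0))$ and verify closure under this multiplication), and (ii) the merely Lipschitz regularity of the $p$-th root partition $\chi_i = \phi_i^{1/p}$, which is why the local inequality must first be extended from $C_c^\infty$ to $C_c$ test functions before the covering step is carried out.
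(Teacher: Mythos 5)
Your proof is correct, but it takes a genuinely different route from the paper's. The paper works global-first: it applies the Hardy inequality on all of $\Gw$ to $(u_n-u)\Phi$, $\Phi\in C_c^\infty(\R^N)$, to get $\nu(E)\le \mathcal{B}_g(\Om)\,\Gamma(E)$ for every Borel set $E$ (which in particular shows $\Gamma\ge 0$ and $\nu\ll\Gamma$), and then estimates the Radon--Nikodym derivative $\tfrac{\dnu}{\dGamma}(x)$ by the Lebesgue differentiation theorem for Radon measures \cite{Federer} combined with the localized bound $\nu(B_r(x))\le \mathcal{B}_g(\Om\cap B_r(x))\,\Gamma(B_r(x))$, obtaining $\tfrac{\dnu}{\dGamma}\le 1/\S_g(x)\le 1/\S_g^*$, which yields the inequality and the support statement simultaneously. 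You instead go local-first: you extract the $\ep$-perturbed localized Hardy inequality directly from the monotone limit defining $\S_g(x_0,\Gw)$, pass to the limit exactly as the paper does (Lemma~\ref{uineq} plus weak$^*$ convergence), and then globalize with a $p$-th-power partition of unity $\sum_i\chi_i^p=1$, correctly extending the local inequality from $C_c^\infty$ to $C_c$ test functions first since $\chi_i=\phi_i^{1/p}$ is only continuous where $\phi_i$ vanishes. The analytic core (testing with $(u_n-u)\Phi$ and Lemma~\ref{uineq}) is identical; your covering argument trades the Radon--Nikodym/differentiation machinery for elementary bookkeeping and delivers the support statement by the same local mechanism, while the paper's density bound is pointwise sharper than what the covering argument records. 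Two caveats, both shared with the paper: (i) the admissibility of $(u_n-u)\Phi$ in $\mathcal{D}^{1,p}_{A,V^+}(\Om\cap B_r(x_0))$ when $x_0\in\partial\Gw$ requires the approximation you flag, and the paper leaves the same point implicit when it replaces the constant for the weight $g\chi_{B_r(x)}$ on $\Gw$ by $\mathcal{B}_g(\Gw\cap B_r(x))$; (ii) in your summation step you should record that $\Gamma\ge 0$ (it follows from your own local inequalities, since $\S_g(\Gw)>0$), so that the indices with $\S_g(x_i,\Gw)=\infty$, where the local inequality only forces $\int(\Psi\chi_i)^p\dnu=0$, contribute nonnegative terms on the right-hand side.
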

\begin{proof}
  For $\Phi \in C_c^{\infty}(\R^N)$, $(u_n-u) \Phi \in \DpAV$. Therefore, since $g \in \H_p(\Om,V)$, using Remark \ref{Remark_ref}, we obtain
\begin{align} \label{req:1}
 \int_{\R^N} \!\!|\Phi|^p \!\!\dnu_n  & \!=\! \int_{\Om}\! |g||(u_n-u)\Phi|^p \dx   \leq   
 \mathcal{B}_g(\Om)  \!  \int_{\Om} \!\left[|\nabla((u_n -  u)\Phi)|_A^p +V|(u_n -  u)\Phi|^p \right] \dx \nonumber\\ 
 & =  \mathcal{B}_g(\Om)    \int_{\R^N} \left[|\nabla((u_n -  u)\Phi)|_A^p +V|(u_n -  u)\Phi|^p \right] \dx.
\end{align}
By Proposition \ref{Prop:ineq}  \[ \lim_{n \ra \infty} \int_{\R^N} |\nabla((u_n -  u)\Phi)|_A^p \dx = \lim_{n \ra \infty} \int_{\R^N} |\nabla(u_n -  u)|_A^p |\Phi|^p \dx.\]
Thus, by taking $n \ra \infty$ in \eqref{req:1}, we obtain
 \begin{align*} \label{forrmk}
  \int_{\R^N} |\Phi|^p \dnu \leq \mathcal{B}_g(\Om)  \int_{\R^N} |\Phi|^p \dGamma .
 \end{align*}
Thus, 
  \begin{equation*}\label{measureinequality1}
    \nu(E) \leq \mathcal{B}_g(\Om)   \Gamma (E)  \qquad \forall E \mbox{ Borel  set in }\R^N.
  \end{equation*}
 In particular, $\nu \ll \Gamma$ and hence by Radon-Nikodym theorem, 
 \begin{equation} \label{measureinequality}
  \nu(E) = \int_E  \frac{\dnu}{\dGamma}  \dGamma \qquad \forall E \mbox{ Borel  in }\R^N. 
 \end{equation}
 Further, by Lebesgue differentiation theorem  \cite{ Federer}, we obtain 
 \begin{equation} \label{Lebdiff}
  \frac{\d \nu}{\d \Gamma}(x) = \lim_{r \ra 0} \frac{\nu (B_r(x))}{\Gamma (B_r(x))}.
 \end{equation}
 Now replacing $g$ by $g \chi_{B_r(x)}$ and proceeding as before,
 \[   \nu(B_r(x)) \leq  \mathcal{B}_g(\Om \cap B_{r}(x)) \ \Gamma (B_r(x)).\] 
 Thus from \eqref{Lebdiff} we get 
\begin{eqnarray} \label{21}
 \frac{\dnu}{\dGamma} (x) \leq \frac{1}{\S_g(x)} \leq \frac{1}{\S_g^*}\, .
\end{eqnarray} 
Hence, 
$\norm{\frac{\dnu}{\dGamma}}_{\infty} \leq \frac{1}{\S_g^*}\,$, and using \eqref{measureinequality},  $\S_g^* \ \nu(E) \leq  \Gamma (E)$ 
for all Borel subsets $E$ of $\R^N$.
\end{proof}

The next lemma gives a lower estimate for the measure $\tilde{\Gamma}.$ For $V = 0$ and $A=I_{N \times N}$, similar estimate is obtained in  \cite[Lemma 2.1]{Smets}.  
\begin{lemma} \label{lemma:esstimate}
Let $V,g $ satisfy {\rm{(H1), (H2)}}. If $u_n \in W^{1,p}(\Om) \cap C_c(\Om)$ is such that $u_n \wra u$ in $\DpAV$, then 
$ \widetilde{\Gamma} \geq
     |\nabla u|_A^p + V|u|^p + \S_g^*\nu .$
\end{lemma}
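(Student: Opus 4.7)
I aim to establish two lower bounds on $\widetilde{\Gamma}$ separately,
\begin{equation*}
	(i)\ \widetilde{\Gamma}\geq (|\nabla u|_A^p + V|u|^p)\dx \quad\text{and}\quad (ii)\ \widetilde{\Gamma}\geq \S_g^*\nu,
\end{equation*}
and then combine them using the mutual singularity between Lebesgue measure and $\nu$ provided by hypothesis $(H2)$.

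I first extract (by a diagonal use of Rellich--Kondrachov on a smooth exhaustion of $\Gw$, together with Remark~\ref{Remark:cpct}-(iii)) a subsequence along which $u_n\to u$ almost everywhere in $\Gw$. For $(i)$, I test $\widetilde{\Gamma}$ against a nonnegative $\Phi\in C_c^\infty(\R^N)$. Weak lower semicontinuity of the convex functional $v\mapsto \int\Phi|\nabla v|_A^p\dx$ gives $\liminf_n\int\Phi|\nabla u_n|_A^p\dx\geq \int\Phi|\nabla u|_A^p\dx$; Fatou's lemma applied to the a.e.\ convergence of $V^+|u_n|^p$ produces the analogous bound for the $V^+$-term; and compactness of $T_{V^-}$ on $\DpAV$, guaranteed by $(H1)$ and Theorem~\ref{Thm:compactness}, yields $\int\Phi V^-|u_n|^p\dx\to\int\Phi V^-|u|^p\dx$. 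Summing these and passing $n\to\infty$ inside $\int\Phi\,d\widetilde{\Gamma}_n$ gives $(i)$.

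For $(ii)$, by Lemma~\ref{lem:abs_cont} the measure $\nu$ is supported on $\overline{\Sigma_g}$, so it suffices to show $d\widetilde{\Gamma}/d\nu(x)\geq \S_g^*$ for $\nu$-almost every $x$. Fix such a point $x$, a nonnegative $\psi\in C_c^\infty(B_1(0))$ with $\psi(0)=1$, and set $\psi_r(y):=\psi((y-x)/r)$. Apply the Hardy-type inequality \eqref{WH} to $\psi_r u_n\in\DpAV$. On the left-hand side, the scalar Brezis--Lieb lemma yields $\int\psi_r^p|g||u_n|^p\dx\to \int\psi_r^p|g||u|^p\dx+\int\psi_r^p\,d\nu$. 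On the right-hand side, Proposition~\ref{Prop:ineq} with $a=\psi_r\nabla u_n$ and $b=u_n\nabla\psi_r$ yields
\begin{equation*}
	|\nabla(\psi_r u_n)|_A^p\leq (1+\vge)\psi_r^p|\nabla u_n|_A^p+C(\vge)|u_n\nabla\psi_r|_A^p,
\end{equation*}
whose last term converges (by Rellich on $\supp\nabla\psi_r$) to $\int|u\nabla\psi_r|_A^p\dx$, while the $V$-contribution is handled by splitting $V=V^+-V^-$ and invoking $(H1)$ and scalar Brezis--Lieb exactly as in $(i)$. Dividing the resulting inequality by $\int\psi_r^p\,d\nu$ and letting $r\to 0$, the Besicovitch differentiation theorem gives $\int\psi_r^p\,d\widetilde{\Gamma}\big/\int\psi_r^p\,d\nu\to d\widetilde{\Gamma}/d\nu(x)$, whereas the correction ratio $\int|u\nabla\psi_r|_A^p\dx\big/\int\psi_r^p\,d\nu\to 0$ because the numerator is absolutely continuous with respect to Lebesgue measure while $\nu$ is Lebesgue-singular by $(H2)$. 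Since the localized Hardy constant $\mathcal{B}_g(B_r(x))$ tends to $1/\S_g(x)\leq 1/\S_g^*$ by definition of $\S_g(x,\Gw)$, letting $\vge\to 0$ proves $(ii)$.

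Finally, since $\nu$ is carried by the Lebesgue-null set $\overline{\Sigma_g}$ (by $(H2)$), the absolutely continuous signed measure $(|\nabla u|_A^p+V|u|^p)\dx$ and the positive measure $\S_g^*\nu$ are mutually singular. Taking a Borel set $A$ with $\nu(A^c)=|A|=0$ and decomposing any Borel $E$ as $(E\cap A)\cup(E\cap A^c)$, the bounds $(i)$ and $(ii)$ applied on the two pieces assemble into the desired $\widetilde{\Gamma}(E)\geq \int_E(|\nabla u|_A^p+V|u|^p)\dx+\S_g^*\nu(E)$. The main obstacle is step $(ii)$: the chain-rule correction $C(\vge)\int|u\nabla\psi_r|_A^p\dx$ does not vanish in $n$, and its absorption into the Radon--Nikod\'ym limit succeeds only because $(H2)$ forces $\nu$ to be singular with respect to Lebesgue measure.
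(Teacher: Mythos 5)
Your step (i) and your final mutual-singularity assembly are essentially the paper's Step 1 and Step 3 (the paper proves the lower bound for the $|\nabla u|_A^p+V^+|u|^p$ part via a joint weak lower semicontinuity theorem rather than splitting off a Fatou argument, but that difference is harmless). The problem is step (ii), where you depart from the paper's route and the argument breaks.

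The paper never differentiates $\widetilde{\Gamma}$ against $\nu$ directly. It already has $\S_g^*\nu\leq\Gamma$ from Lemma~\ref{lem:abs_cont} (proved by testing the Hardy inequality with $(u_n-u)\Phi$, so that the chain-rule correction $\int|u_n-u|^p|\nabla\Phi|_A^p\dx$ vanishes as $n\to\infty$ for each \emph{fixed} cut-off by local Rellich compactness), and then inserts a separate Step 2 showing $\widetilde{\Gamma}=\Gamma$ on $\overline{\Sigma_g}$, using the Brezis--Lieb-type pointwise inequality together with $|\overline{\Sigma_g}|=0$ to make the absolutely continuous error term small on shrinking open neighborhoods of $\overline{\Sigma_g}$. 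You instead test with $\psi_r u_n$, so your correction term survives the limit in $n$ and becomes $C(\vge)\int|u\,\nabla\psi_r|_A^p\dx$, and you must kill it as $r\to 0$ \emph{relative to} $\int\psi_r^p\,d\nu$. Your justification --- that the numerator is absolutely continuous while $\nu$ is Lebesgue-singular --- is a non sequitur: because $|\nabla\psi_r|\sim r^{-1}$, the numerator is of order $r^{-p}\int_{B_r(x)}|u|^p\dx$, which is \emph{not} the $B_r(x)$-mass of a fixed measure, so the Besicovitch/Radon--Nikod\'ym differentiation theorem says nothing about its ratio to $\nu(B_r(x))$. Concretely, (H2) only forces $|\overline{\Sigma_g}|=0$; if $\nu$ were, say, comparable to $(N-1)$-dimensional surface measure on a hypersurface through $x$ and $u$ were bounded away from zero there, the ratio would behave like $r^{N-p}/r^{N-1}=r^{1-p}\to\infty$. (The classical rescue via H\"older against $\int|\nabla\psi_r|^N\dx$ and $u\in L^{p^*}_{\loc}$ is also unavailable here, since the lemma makes no assumption $p<N$ and the ambient space is $\mathcal{D}^{1,p}_{A,V^+}$ with a general locally elliptic $A$.) To repair the proof you should follow the paper: use the full statement of Lemma~\ref{lem:abs_cont} (namely $\S_g^*\nu\leq\Gamma$, not just the support assertion) and then prove $\widetilde{\Gamma}=\Gamma$ on $\overline{\Sigma_g}$ as an intermediate step.
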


\begin{proof}
Our proof splits into three steps.

{\bf Step 1:} $\tilde{\Ga} \geq |\nabla u|_A^p + V|u|^p$. Indeed, let $\phi \in C_c^{\infty}(\R^N)$ with $0\leq \phi \leq 1$, we need to show that
$\int_{\R^N} \phi \dtildeGa \geq \int_{\R^N} \phi \left[|\nabla u|_A^p+ V|u|^p \right] \dx. $
Notice that,
\begin{align*}
    \int_{\R^N} \phi \dtildeGa= \lim_{n \ra \infty} \int_{\R^N} \phi \dtildeGa_n  
    & = \lim_{n \ra \infty} \int_{\Om} \phi \left[|\nabla u_n|_A^p+ V|u_n|^p \right] \dx \\
     & = \lim_{n \ra \infty} \int_{\Om} \left[F(x, u_n(x),\nabla u_n(x)) -V^- |u_n|^p \right] \dx ,
\end{align*}
where $F:\Om \times \R \times \R^N \to \R$ is defined as $F(x,r,z)=\phi(x)[|z|_{A(x)}^p+V^+(x) r^p].$ Clearly, $F$ is a Carath\'eodory function  and $F(x,r,\cdot)$ is convex for almost every $(x,r) \in \Om \times \R$. Hence, by 
\cite[Theorem 2.11]{Fillip}, we have
\begin{align*}
    \int_{\R^N} \phi \left[|\nabla u|_A^p + V^+ |u|^p \right] \dx &= \int_{\Om} \phi \left[|\nabla u|_A^p + V^+ |u|^p \right] \dx \\ 
    & \leq\liminf_{n \ra \infty} \int_{\Om} \phi \left[|\nabla u_n|_A^p + V^+ |u_n|^p \right] \dx .
\end{align*} 
On the other hand,  $V^- \in \mathcal{H}_{p,0}(\Om,V^+)$ implies that $V^-\phi \in \mathcal{H}_{p,0}(\Om,V^+)$. Hence, by Theorem \ref{Thm:compactness}  we have $\lim_{n \ra \infty} \int_{\Om} V^- \phi  |u_n|^p \dx = \int_{\Om} V^- \phi  |u|^p \dx$. Combining these two facts we  obtain Claim 1.

\noi {\bf Step 2:} $\tilde{\Gamma}=\Gamma$ on $\overline{\Gs_g}$. Indeed, let $E\subset\overline{\Gs_g}$ be a Borel set. Thus, for each $m \in \N$, there exists an open subset $O_{m}$ containing $E$ such that $|O_m|=|O_{m} \setminus E| < \frac{1}{m}$. 
Let $\vge >0$ be given. Using Lemma~\ref{uineq} it follows that for any $\phi \in C_c^{\infty}(O_{m})$ with $0 \leq \phi \leq 1$, we have
\begin{align*}
  & \left|\int_{\Om}  \phi \dGamma_n -\int_{\Om}  \phi \dtildeGamma_n  \right|  
  = \left|\int_{\Om}  \phi \left[|\nabla (u_n-u)|_A^p + V |u_n-u|^p \right] \dx -\int_{\Om}  \phi \left[|\nabla u_n|_A^p + V |u_n|^p \right] \dx \right| \\[2mm]
     &\!\leq \!\!  \int_{\Om} \!\! \phi \left||\nabla (u_n-u)|_A^p-|\nabla u|_A^p \right| \!\dx \!+ \!\!\!\int_{\Om} \phi V^+ \!\left||u_n-u|^p-|u_n|^p\right| \! \dx   \!  + \!\! \int_{\Om} \!\! \phi   V^- \left||u_n-u|^p-|u_n|^p \right|  \!\dx  \\[2mm]
    &\leq \left[\vge \int_{\Om}  \phi |\nabla u_n|_A^p \dx + \text{C}_1(\vge,p) \int_{\Om} \phi |\nabla u|_A^p \dx \right] + \left[\vge \int_{\Om}  \phi V^+| u_n|^p \dx + \text{C}_2(\vge,p) \int_{\Om} \phi V^+| u|^p \dx \right] \\[2mm]
    & + \left[\vge \int_{\Om}  \phi V^-| u_n|^p \dx + \text{C}_2(\vge,p) \int_{\Om} \phi V^-| u|^p \dx \right]  \leq   \vge \text{C}_4 L + \text{C}_3(\vge,p) \int_{O_{m}}   [|\nabla u|_A^p+V^+|u|^p] \dx,
\end{align*}
where $L=\sup_{n}\left\{\int_{\Om} [|\nabla u_n|_A^p+V^+|u_n|^p] \dx\right\}$, and in the latter inequality we used the fact that $V^- \in \mathcal{H}_p(\Om, V^+)$.
Now letting $n \ra \infty$, we obtain 
$$
 \left|\int_{\Om}  \phi \dGamma  -\int_{\Om}  \phi \dtildeGamma  \right|  \leq \vge C_4 L + \text{C}(\vge,p) \int_{O_{m}}   [|\nabla u|_A^p+V^+|u|^p]\dx.
$$
Therefore,
\begin{eqnarray*} 
\left|\Gamma (O_m)-\tilde{\Gamma} (O_m) \right| &=& \sup \left\{ \left|\int_{\Om}  \phi \dGamma -\int_{\Om}  \phi \dtildeGamma  \right| \, \bigm|\, \phi \in C_c^{\infty}(O_m), 0 \leq \phi \leq 1 \right \} \\
&\leq & \vge C_4L + \text{C}(\vge,p) \int_{O_{m}}   [|\nabla u|_A^p+V^+|u|^p]\dx,
\end{eqnarray*}
Now as $m \ra \infty,$ $|O_m|\ra 0$ (since $\left|\overline{\Gs_g}\right|=0 $), and hence, $| \Gamma (E)-\tilde{\Gamma} (E)| \leq \vge C_4L$. Since $\vge >0$ is arbitrary, we conclude $\Gamma(E)=\tilde{\Gamma} (E).$

\noi {\bf{Step 3:}} $ \tilde{\Gamma} \geq |\nabla u|_A^p + V|u|^p+ \S_g^*\nu$. From Lemma \ref{lem:abs_cont} we have  $ \Gamma \geq  \S_g^*\nu$. Furthermore,  by Lemma~\ref{lem:abs_cont}, $\nu$ is supported on $\overline{\Gs_g}$. Hence Step 1 and Step 2 yield the following: 
\begin{equation}\label{rep1}
 \tilde{\Gamma} \geq \left\{\begin{array}{ll}
    |\nabla u|_A^p +V|u|^p,  &  \\
      \S_g^*\nu. &
\end{array}\right.    
\end{equation}
Since $\left|\overline{\Gs_g}\right|=0$, the measure $|\nabla u|_A^p+V|u|^p$ is supported in $\overline{\Gs_g}^{c}$, and hence from \eqref{rep1} we easily obtain  $\tilde{\Gamma} \geq |\nabla u|_A^p + V|u|^p + \S_g^*\nu.$
\end{proof}

\begin{lemma}\label{lemmasmets}
 Suppose that $V$ satisfies {\rm{(H1)}}, $g \in \mathcal{H}_p(\Om,V)$ is nonnegative. Let $u_n \in W^{1,p}(\Om) \cap C_c(\Om)$ be such that $u_n \wra u$ in $ \DpAV$, and $\Phi_R \in C_b^{\infty}(\R^N)$ with $0\leq \Phi_R \leq 1$, $\Phi_R = 0 $ on $\overline{B_R}$ and $\Phi_R = 1 $ on $B_{R+1}^c$.
Then,
\begin{align*}
&(A) \,    \lim_{R \ra \infty} \overline{\lim_{n \ra \infty}}  \int_{\Om \cap \overline{B_R}^c} g|u_n|^p \dx =\lim_{R \ra \infty}   \overline{\lim_{n \ra \infty}}    \nu_n(\Om \cap \overline{B_R}^c) =\lim_{R \ra \infty} \overline{\lim_{n \ra \infty}} \int_{\Om}  \Phi_R \dnu_n, \\
&(B) \,     \lim_{R \ra \infty}\overline{\lim_{n \ra \infty} } \!\!  \int_{\Om \cap \overline{B_R}^c} \!\left[
|\nabla u_n|^p \! + \!V  |u_n|^p  \right]\!  \dx  \!=
\!  \lim_{R \ra \infty} 
\overline{\lim_{n \ra \infty}}  \Gamma_n(\Om \cap \overline{B_R}^c)\! = 
\!\lim_{R \ra \infty} \overline{\lim_{n \ra \infty}} \! \int_{\Om}  \!\!\Phi_R \dGamma_n.
\end{align*}    
 \end{lemma}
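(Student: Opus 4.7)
The plan is to verify each chain of equalities in (A) and (B) by two elementary observations: an algebraic inequality that lets one pass between $u_n$ and $u_n-u$, and a squeeze using the cutoff $\Phi_R$.

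For the first equality in (A), I reduce to showing
$$\lim_{R\to\infty}\varlimsup_{n\to\infty}\int_{\Om\cap\overline{B_R}^c} g\bigl||u_n|^p-|u_n-u|^p\bigr|\dx=0.$$
Applying the scalar version of Proposition~\ref{Prop:ineq}, namely $\bigl||a+b|^p-|a|^p\bigr|\leq\varepsilon|a|^p+C(\varepsilon,p)|b|^p$, with $a=u_n-u$ and $b=u$, the integrand is bounded by $\varepsilon g|u_n-u|^p+C(\varepsilon)g|u|^p$. Integrating and using Remark~\ref{Remark_ref}, the first term is dominated by $\varepsilon\,\S_g(\Om)\|u_n-u\|_{\DpAV}^p$, which is uniformly bounded in $n$ since $u_n\wra u$ in $\DpAV$. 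The second integral $\int_{\Om\cap\overline{B_R}^c} g|u|^p\dx$ tends to $0$ as $R\to\infty$ because $u\in\DpAV$ and $g\in\H_p(\Om,V)$ together force $g|u|^p\in L^1(\Om)$, so by dominated convergence the tail vanishes. Letting $\varepsilon\to 0$ yields the first equality.

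For the second equality in (A), I use the pointwise squeeze $\chi_{\Om\cap\overline{B_{R+1}}^c}\leq\Phi_R\leq\chi_{\Om\cap\overline{B_R}^c}$, which gives
$$\nu_n(\Om\cap\overline{B_{R+1}}^c)\leq\int_\Om\Phi_R\dnu_n\leq\nu_n(\Om\cap\overline{B_R}^c).$$
Taking $\varlimsup_n$ and then $\lim_R$ on both sides, the outer sequences share a common limit and pinch the middle quantity.

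For (B), the strategy is parallel. The first equality rests on Proposition~\ref{Prop:ineq} applied with $a=\nabla(u_n-u)$, $b=\nabla u$, together with the scalar inequality for the terms $V^{\pm}|u_n|^p$ versus $V^{\pm}|u_n-u|^p$. The vanishing of $\int_{\Om\cap\overline{B_R}^c}\bigl[|\nabla u|_A^p+V^+|u|^p\bigr]\dx$ as $R\to\infty$ is immediate from $u\in\DpAV$, while $\int_{\Om\cap\overline{B_R}^c} V^-|u|^p\dx\to 0$ uses hypothesis (H1): since $V^-\in\H_p(\Om,V^+)$, we have $V^-|u|^p\in L^1(\Om)$, and dominated convergence supplies the tail bound. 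Uniform $n$-bounds on $\int V^{\pm}|u_n-u|^p\dx$ also follow from (H1) and the boundedness of $(u_n-u)$ in $\DpAV$. The second equality uses exactly the same cutoff squeeze as in (A), with $\nu_n$ replaced by $\Gamma_n$.

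The only mild obstacle is the indefinite potential $V$ in (B); assumption (H1) is precisely what makes $V^-$ a controlled perturbation of the $\DpAV$-norm, so that all tail estimates are uniform in $n$ and vanish for the limit $u$.
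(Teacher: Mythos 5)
Your argument is correct. For part (B) and for the second equalities in both parts it is essentially identical to the paper's proof: the same squeeze $\chi_{\Om\cap\overline{B_{R+1}}^c}\leq\Phi_R\leq\chi_{\Om\cap\overline{B_R}^c}$, and the same $\vge$-perturbed inequality of Proposition~\ref{Prop:ineq} (via Lemma~\ref{uineq}) combined with the uniform bound $L=\sup_n\|u_n\|_{\DpAV}^p<\infty$, the tail decay of $\int[|\nabla u|_A^p+V^+|u|^p]$, and hypothesis (H1) to control the $V^-$ terms. The one genuine divergence is in the first equality of (A): the paper invokes the Brezis--Lieb lemma \cite[Theorem~1.9]{Lieb} to identify $\varlimsup_n\bigl|\nu_n(\Om\cap\overline{B_R}^c)-\int_{\Om\cap\overline{B_R}^c}g|u_n|^p\dx\bigr|$ exactly with $\int_{\Om\cap\overline{B_R}^c}g|u|^p\dx$, whereas you use the scalar form of Proposition~\ref{Prop:ineq} with $a=u_n-u$, $b=u$ and then send $\vge\to0$. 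Your route is slightly more self-contained and uniform across (A) and (B) --- in particular it does not require extracting an a.e.\ convergent subsequence of $(u_n)$, which Brezis--Lieb needs and which the paper obtains only implicitly from local Rellich compactness --- at the cost of an extra $\vge$-limit; both yield the needed conclusion since each side of the claimed equalities is monotone in $R$, so the $R$-limits exist. One small slip: the bound on $\int_\Om g|u_n-u|^p\dx$ coming from \eqref{WHV3} carries the constant $\mathcal{B}_g(\Om)=1/\S_g(\Om)$, not $\S_g(\Om)$; this does not affect the argument, which only needs uniform boundedness in $n$.
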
 

\begin{proof}
 By Brezis-Lieb lemma~\cite[Theorem~1.9]{Lieb},
 \begin{align*}
 &\left|\displaystyle \overline{\lim_{n \ra \infty}}  \nu_n(\Om \cap \overline{B_R}^c) - \displaystyle \overline{\lim_{n \ra \infty}}\int_{\Om \cap \overline{B_R}^c} g|u_n|^p \dx \right|\leq 
 \displaystyle \overline{\lim_{n \ra \infty}} \left| \nu_n(\Om \cap \overline{B_R}^c) - \int_{\Om \cap \overline{B_R}^c} g|u_n|^p \dx \right| \\
& = 
 \overline{\lim_{n \ra \infty}} \left|\int_{\Om \cap \overline{B_R}^c} g|u_n-u|^p \dx - \int_{\Om \cap \overline{B_R}^c} g|u_n|^p \dx \right| = \int_{\Om \cap \overline{B_R}^c} g|u|^p \dx.
  \end{align*}
 By \eqref{WHV3}, $g|u|^p \in L^1(\Om)$, therefore, the right-hand side integral goes to $0$ as $R \ra \infty$. Thus, we get the first equality in $(A)$. For the second equality, it is enough to observe that
 \begin{eqnarray*}
  \int_{\Om \cap \overline{B_{R+1}}^c} g|u_n -u|^p \dx \leq \int_{\Om} g|u_n -u|^p \Phi_R \dx \leq \int_{\Om \cap \overline{B_{R}}^c} g|u_n -u|^p \dx.
 \end{eqnarray*}
Now by taking $n\to \infty $, and then $R \ra \infty$, we get the required equality. 

Now we proceed to prove (B). For $\vge >0$, using Lemma \ref{uineq} we estimate
\begin{align*}
& \displaystyle \overline{\lim_{n \ra \infty}} \left| \Gamma_n(\Om \cap \overline{B_R}^c) - \int_{\Om \cap \overline{B_R}^c} \left[|\nabla u_n|_A^p+V|u_n|^p \right] \dx \right| \\
&= 
\overline{\lim_{n \ra \infty}} \left|\int_{\Om \cap \overline{B_R}^c} \left[|\nabla (u_n-u)|_A^p+V|u_n-u|^p \right] \dx - \int_{\Om \cap \overline{B_R}^c} \left[|\nabla u_n|_A^p+V|u_n|^p \right] \dx \right| \\
& \leq  
\overline{\lim_{n \ra \infty}} \left(\left|\int_{\Om \cap \overline{B_R}^c} [|\nabla (u_n-u)|_A^p  - |\nabla u_n|_A^p] \dx \right| +  \left|\int_{\Om \cap \overline{B_R}^c} [V|u_n-u|^p  - V|u_n|^p]  \dx \right| \right)  
\\& = \left( \vge \overline{\lim_{n \ra \infty}} \int_{\Om \cap \overline{B_R}^c}   |\nabla u_n|_A^p \dx + \text{C}_1(\vge,p) \int_{\Om \cap \overline{B_R}^c} |\nabla u|_A^p \dx \right) 
\\ &     \qquad \qquad +  \left( \vge \overline{\lim_{n \ra \infty}} \int_{\Om \cap \overline{B_R}^c}   (V^++V^-)|u_n|^p \dx + \text{C}_2(\vge,p) \int_{\Om \cap \overline{B_R}^c}  (V^++V^-) |u|^p \dx \right) \\
&  \leq  \left( \vge \text{C}_3  L  + \text{C}_4(\vge,p) \int_{\Om \cap \overline{B_R}^c} [|\nabla u|_A^p+V^+|u|^p] \dx  \right) ,
\end{align*}
where $L=\sup_{n}\left\{\int_{\Om} [|\nabla u_n|_A^p+V^+|u_n|^p] \dx\right\}$. The latter inequality uses once again the fact that $V^- \in \mathcal{H}_p(\Om, V^+)$.

Thus, due to the subadditivity of the limsup, and by taking $R \ra \infty$ and then $\vge \ra 0$, we obtain the first equality of (B). The second equality of part (B) follows from the same argument as the corresponding part of (A). 
\end{proof} 
\begin{lemma}\label{mlc2}
  Let $V$ satisfy {\rm{(H1)}}, $g \in \mathcal{H}_p(\Om,V)$ be nonnegative,  and $u_n \in W^{1,p}(\Om) \cap C_c(\Om)$ be such that  $u_n \wra u$ in $\DpAV$. Set
  \begin{eqnarray*}
    \nu_{\infty} = \displaystyle\lim_{R \ra \infty} \overline{\lim_{n \ra \infty}}  \nu_n(\Om \cap \overline{B_R}^c) \quad \mbox{and} \quad \Gamma_{\infty} =  \displaystyle\lim_{R \ra \infty} \overline{\lim_{n \ra \infty}}  \Gamma_n(\Om \cap \overline{B_R}^c).
  \end{eqnarray*} 
  Then
\begin{enumerate}[(i)]
 \item $ \S_g^{\infty} \nu_{\infty} \leq   \Gamma_{\infty} \nonumber,$
  \item $ \overline{\lim}_{n \ra \infty} \int_{\Om} g|u_n|^p \dx = \int_{\Om} g|u|^p \dx + \norm{\nu} + \nu_{\infty}.$
\item[(iii)] Further, if $|\overline{\Gs_g}|=0$, then we have

\begin{equation*}
\displaystyle \overline{\lim_{n \ra \infty}} \int_{\Om} [|\nabla u_n|_A^p+ V|u_n|^p] \dx  \geq 
\displaystyle \int_{\Om} [|\nabla u|_A^p + V|u|^p] \dx + \S_g^*\norm{\nu} +  \Gamma_{\infty} \,.
  \end{equation*}
\end{enumerate}
  \end{lemma}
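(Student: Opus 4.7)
The plan is to establish all three parts by passing to limits first in $n$ and then in $R$, using the decomposition $1=(1-\Phi_R^p)+\Phi_R^p$: the compactly supported factor $1-\Phi_R^p\in C_c(\R^N)$ captures the ``interior'' part, where weak-$*$ convergence of $\nu_n,\Gamma_n,\tilde\Gamma_n$ and the measure inequality of Lemma~\ref{lemma:esstimate} apply; $\Phi_R^p$ captures the mass at infinity, and the sandwich $\chi_{\overline{B_{R+1}}^c}\le \Phi_R^p\le \chi_{\overline{B_R}^c}$ combined with Lemma~\ref{lemmasmets} identifies $\lim_R\overline{\lim}_n\int\Phi_R^p\,d\mu_n$ with $\nu_\infty$ for $\mu_n=\nu_n$ and with $\Gamma_\infty$ for $\mu_n=\Gamma_n$. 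Throughout, one may assume $g\ge 0$.

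For (i), denoting by $\S_g^{(R)}$ the stage-$R$ infimum in the monotone limit defining $\S_g^{\infty}$, the function $(u_n-u)\Phi_R$ lies in $\mathcal{D}^{1,p}_{A,V^+}(\Om\cap \overline{B_R}^c)$, so
\[
\S_g^{(R)}\int_{\Om} g\,|u_n-u|^p\,\Phi_R^p\,\dx\;\le\;Q_{p,A,V}\bigl((u_n-u)\Phi_R\bigr).
\]
Lemma~\ref{uineq} converts the right-hand side, as $n\to\infty$, into $\overline{\lim}_n\int\Phi_R^p\,d\Gamma_n$, while the left-hand side becomes $\S_g^{(R)}\,\overline{\lim}_n\int\Phi_R^p\,d\nu_n$. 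Passing $R\to\infty$ with the sandwich identification yields $\S_g^{\infty}\,\nu_\infty\le\Gamma_\infty$.

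For (ii), the local $L^p$-compactness from Remark~\ref{Remark:cpct}(iii) provides a subsequence with $u_n\to u$ a.e., and boundedness of $\int_\Om g|u_n|^p\,\dx$ from \eqref{WHV3} allows the Brezis--Lieb lemma to give
\[
\overline{\lim_{n\to\infty}}\int_{\Om} g|u_n|^p\,\dx=\int_{\Om} g|u|^p\,\dx+\overline{\lim_{n\to\infty}}\,\nu_n(\R^N).
\]
Splitting $\nu_n(\R^N)=\int(1-\Phi_R^p)\,d\nu_n+\int\Phi_R^p\,d\nu_n$, the first integral has the true limit $\int(1-\Phi_R^p)\,d\nu$ (weak-$*$ convergence against $1-\Phi_R^p\in C_c(\R^N)$), which increases to $\|\nu\|$ as $R\to\infty$ by monotone convergence, while the second obeys $\lim_R\overline{\lim}_n\int\Phi_R^p\,d\nu_n=\nu_\infty$ by Lemma~\ref{lemmasmets}(A). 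Adding these produces the stated identity.

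For (iii), the analogous split $\tilde\Gamma_n(\R^N)=\int(1-\Phi_R^p)\,d\tilde\Gamma_n+\int\Phi_R^p\,d\tilde\Gamma_n$ gives, via weak-$*$ convergence, a first-term limit $\int(1-\Phi_R^p)\,d\tilde\Gamma$, which by Lemma~\ref{lemma:esstimate} (applied to the nonnegative test $1-\Phi_R^p$) is bounded below by $\int(1-\Phi_R^p)[|\nabla u|_A^p+V|u|^p]\,\dx+\S_g^*\int(1-\Phi_R^p)\,d\nu$. As $R\to\infty$, dominated convergence (noting $V|u|^p\in L^1(\Om)$ since $V^-\in\mathcal{H}_p(\Om,V^+)$ by hypothesis~(H1)) and monotone convergence send these to $\int[|\nabla u|_A^p+V|u|^p]\,\dx+\S_g^*\|\nu\|$. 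The tail term $\overline{\lim}_n\int\Phi_R^p\,d\tilde\Gamma_n$ has $R$-limit $\Gamma_\infty$ by repeating the argument of Lemma~\ref{lemmasmets}(B): split $V=V^+-V^-$, sandwich the nonnegative integrand $|\nabla u_n|_A^p+V^+|u_n|^p$ between $\chi_{\overline{B_{R+1}}^c}$ and $\chi_{\overline{B_R}^c}$, and use the compactness of $T_{V^-}$ from Theorem~\ref{Thm:compactness} to push $\int V^-\Phi_R^p|u_n|^p\,\dx\to \int V^-\Phi_R^p|u|^p\,\dx\to 0$. The main obstacle is precisely this signed-measure step: because $\tilde\Gamma_n$ is not a positive measure when $V$ is indefinite, each passage to the limit must separately handle the $V^-$ contribution, and hypothesis~(H1)---through the compactness theorem---is the exact tool that causes this contribution to vanish, both in identifying the tail limit as $\Gamma_\infty$ and in guaranteeing integrability of $V|u|^p$.
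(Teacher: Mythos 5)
Your proposal is correct and follows essentially the same route as the paper: part (i) tests the Hardy inequality on $\Om\cap\overline{B_R}^c$ with $(u_n-u)\Phi_R$ and uses Lemma~\ref{uineq} together with Lemma~\ref{lemmasmets} to pass to the double limit; parts (ii) and (iii) use the $(1-\Phi_R^p)/\Phi_R^p$ splitting, Brezis--Lieb, weak-$*$ convergence of $\nu_n$ and $\tilde\Gamma_n$, Lemma~\ref{lemma:esstimate}, and the tail identification of Lemma~\ref{lemmasmets}(B). The only (cosmetic) differences are that you apply Brezis--Lieb globally before splitting rather than after, and that you re-derive the tail limit for $\tilde\Gamma_n$ instead of quoting the first equality of Lemma~\ref{lemmasmets}(B) directly.
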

\begin{proof}
 (i): For $R>0$, choose $\Phi_R \in C_b^{1}(\R^N)$ satisfying  $0\leq \Phi_R \leq 1$, $\Phi_R = 0 $ on $\overline{B_{R+\frac{1}{2}}}$ and $\Phi_R = 1 $ on $B_{R+1}^c$. Clearly, $(u_n-u) \Phi_R \in \D^{1,p}_{A,V^+}( \Om \cap \displaystyle\overline{B_R}^c)$, and since $g \in \mathcal{H}_p(\Om, V)$, we have
\[ \int_{\Om \cap \overline{B_R}^c} \!\! g|(u_n-u)\Phi_R|^p \dx \leq \! \mathcal{B}_{g}(\Om \cap \overline{B_R}^c)  \!\left[\int_{\Om \cap \overline{B_R}^c} \!\!|\nabla((u_n -  u)\Phi_R)|_A^p \dx + \!\int_{\Om \cap \overline{B_R}^c} \!\!V|u_n -  u|^p \dx\right]. \]
By Lemma \ref{uineq}  
$$\displaystyle \lim_{n \ra \infty} \int_{\Om \cap \overline{B_R}^c} |\nabla((u_n -  u)\Phi_R)|_A^p \dx =  \lim_{n \ra \infty} \int_{\Om \cap \overline{B_R}^c}   |\nabla(u_n -  u)|_A^p\Phi_R^p \,\dx.$$ 
Therefore, letting $n \ra \infty$, $ R \ra \infty$, and using Lemma \ref{lemmasmets} successively in the above inequality, we obtain $ \S_g^{\infty}\nu_{\infty} \leq  \Gamma_{\infty}.$ 

\noi $(ii)$: Choosing $\Phi_R$ as in Lemma \ref{lemmasmets} and using Brezis-Lieb lemma \cite[Theorem~1.9]{Lieb}, we have
\begin{eqnarray}
 \overline{\lim_{n\ra \infty}} \int_{\Om} g| u_n|^p \dx &=&  \overline{\lim_{n\ra \infty}} \left[ \int_{\Om} g| u_n|^p (1-\Phi_R)\dx + \int_{\Om} g| u_n|^p \Phi_R \dx \right] \nonumber \\
                                          &=& \overline{\lim_{n\ra \infty}} \!\left[\!  \int_{\Om} g| u|^p (1-\Phi_R)\dx +\!\! \int_{\Om} g| u_n-u|^p (1-\Phi_R) \dx + \!\!\int_{\Om} g| u_n|^p \Phi_R \dx\right]   \nonumber \\
                                          &=&    \int_{\Om} g| u|^p (1-\Phi_R)\dx + \int_{\Om}  (1-\Phi_R) \d\nu + \overline{\lim_{n\ra \infty}} \int_{\Om} g| u_n|^p \Phi_R \dx  \label{need1} 
  \end{eqnarray} 
The last equality uses the facts that $g|u_n-u|^p \wrastar \nu$, and $\displaystyle\overline{\lim_{n\ra \infty}} (a_n+b_n) = \lim_{n \ra \infty}a_n + \overline{\lim_{n\ra \infty}} b_n$ for any real sequences $(a_n), (b_n)$ with $(a_n)$ being convergent. Now, by taking $R \ra \infty$ in \eqref{need1} and using Lemma \ref{lemmasmets}-$(A)$, we obtain
\begin{align*}
\overline{\lim_{n\ra \infty}} \int_{\Om} g| u_n|^p \dx = \int_{\Om} g| u|^p \dx + \norm{\nu} + \nu_{\infty}. 
\end{align*} 
 
 \noi $(iii)$: Notice that 
 \begin{align*}
&\, \overline{\lim_{n\ra \infty}} \int_{\Om} [|\nabla u_n|_A^p + V|u_n|^p]\dx \\ &= \overline{\lim_{n\ra \infty}} \left[ \int_{\Om} [|\nabla u_n|_A^p + V|u_n|^p] (1-\Phi_R) \dx + \int_{\Om} [|\nabla u_n|_A^p + V|u_n|^p] \Phi_R \right] \dx \\ & = \, \, \int_{\Om} (1-\Phi_R)\tilde{\Gamma} + \overline{\lim_{n\ra \infty}} \int_{\Om} [|\nabla u_n|_A^p + V|u_n|^p] \Phi_R  \dx .
  \end{align*}
The last equality uses the facts that $|\nabla u_n|^p + V|u_n|^p \wrastar \widetilde{\Gamma}$, and $\displaystyle\overline{\lim_{n\ra \infty}} (a_n+b_n) = \lim_{n \ra \infty}a_n + \overline{\lim_{n\ra \infty}} b_n$ for any real sequences $(a_n), (b_n)$ with $(a_n)$ being convergent. By taking $R \ra \infty$ and using part (B) Lemma \ref{lemmasmets} we get 
\begin{equation*} 
 \overline{\lim_{n\ra \infty}} \int_{\Om} [|\nabla u_n|_A^p + V|u_n|^p] \dx= \|\tilde{\Gamma}\|+ \Gamma_{\infty}.   
\end{equation*}
Now, using Lemma \ref{lemma:esstimate}, we obtain 
\begin{equation*} 
\displaystyle \overline{\lim_{n\ra \infty}} \int_{\Om} [|\nabla u_n|_A^p + V|u_n|^p] \dx \geq 
\displaystyle \int_{\Om} [|\nabla u|_A^p + V|u|^p] \dx + \S_g^*\norm{\nu} +  \Gamma_{\infty} \,. \qedhere
  \end{equation*}
 \end{proof}
	\begin{proof}[Proof of Theorem \ref{bestconst_thm}] Recall that, for $g \in \mathcal{H}_p(\Om,V)$,   the best constant $\mathcal{B}_g(\Om)$ in \eqref{WH} is given by the Rayleigh-Ritz variational principle 
\begin{align*}
    \frac{1}{\mathcal{B}_g(\Om)} & = \inf \left\{\mathcal{R}_V(\phi):=\frac{\int_{\Om} [|\nabla \phi|_A^p+V|\phi|^p] \dx}
    {\int_{\Om} |g| |\phi|^p \dx} \,  \biggm|  \,   \phi \in W^{1,p}(\Om) \cap C_c(\Om) \setminus \{0\}  \right\} \\
    &= \displaystyle \inf \left\{\mathcal{R}_V(\phi) \bigm|  \phi \in W^{1,p}(\Om) \cap C_c(\Om), \, \int_{\Om} [\nabla\phi|_A^p+V^+|\phi|^p] \dx =1   \right\},
\end{align*} 
where in the latter equality we used the homogeneity of the Rayleigh quotient.
Let $(u_n)$ be a sequence that minimizes $ \mathcal{R}_V(\phi)$ over $\{\phi\mid \phi \in W^{1,p}(\Om) \cap C_c(\Om) \,, \int_{\Om}  [|\nabla \phi|_{{A}}^p+V^+|\phi|^p =1\}$. Then, one can see that
\begin{align} \label{1st_estimate}
\overline{\lim}_{n \ra \infty} \int_{\Om}|g||u_n|^p \dx  \geq \mathcal{B}_g(\Om)  \, \underline{\lim}_{n \ra \infty} \int_{\Om} [|\nabla u_n|_A^p+V|u_n|^p] \dx .
\end{align}
Since $\displaystyle \int_{\Om}  [|\nabla u_n|_{{A}}^p+V^+|u_n|^p]\dx =1$ for all $n \in \N$, i.e., $(u_n)$ is bounded in $\DpAV$, it follows that $ u_n \wra u $ in $ \DpAV$ (up to a subsequence). $V^- \in \mathcal{H}_{p,0}(\Om,V^+)$ implies that $\displaystyle \lim_{n\ra \infty} \int_{\Om} V^-|u_n|^p \dx=\int_{\Om} V^-|u|^p \dx$ (by Remark \ref{Rmk:Hptilde}-$(i)$). Thus, we obtain
\begin{align} \label{limsup_liminf}
\lim_{n \ra \infty} \int_{\Om} [|\nabla u_n|_A^p+V|u_n|^p] \dx
 & = \lim_{n \ra \infty} \int_{\Om} [|\nabla u_n|_A^p+V^+|u_n|^p] \dx - \lim_{n \ra \infty} \int_{\Om} V^-|u_n|^p \dx 
\end{align}
Recall our assumptions that  $ u_n \wra u $ in $ \DpAV$ and that up to a subsequence,  
$$ |\nabla u_n - \nabla u|^p + V| u_n -  u|^p \wrastar \Gamma, \ \ |\nabla u_n|^p+ V| u_n |^p \wrastar \tilde{\Gamma}, \ \ |g||u_n - u|^p \wrastar \nu  \ \mbox{in } \mathbb{M}(\R^N).$$ 
Using  Lemma \ref{mlc2} we get 
\begin{equation}\label{eq-gun}
\overline{\lim}_{n \ra \infty} \int_{\Om} |g| |u_n|^p \dx= \int_{\Om} |g| |u|^p \dx+ \norm{\nu} + \nu_{\infty} . \end{equation}

Suppose that $\norm{\nu}+\nu_{\infty} \neq 0$.  Recalling that the Hardy-type inequality holds for $u \in \DpAV$ (by Remark~\ref{Remark_ref}-$(i)$), and using \eqref{limsup_liminf}, Lemma~\ref{mlc2} $(iii)$, the spectral gap assumption, and  finally \eqref{eq-gun} , we obtain
\begin{align*}
&\overline{\lim}_{n \ra \infty} \int_{\Om}|g||u_n|^p \dx  \geq \mathcal{B}_g(\Om)  \, \underline{\lim}_{n \ra \infty} \int_{\Om} [|\nabla u_n|_A^p+V|u_n|^p] \dx  \\ 
& \geq \mathcal{B}_g(\Om)  \, \overline{\lim}_{n \ra \infty} \int_{\Om} [|\nabla u_n|_A^p+V|u_n|^p] \dx  
\geq  \mathcal{B}_g(\Om) \left( \int_{\Om} [|\nabla u|_A^p+V|u|^p] \dx +  \S_g^*\norm{\nu} + \Gamma_{\infty} \right) \\
& \geq \mathcal{B}_g(\Om) \left( \frac{1}{\mathcal{B}_g}(\Om) \int_{\Om} |g||u|^p \dx +  \S_g^*\norm{\nu} + \S_g^{\infty}\nu_{\infty} \right) \\
& > \frac{\mathcal{B}_g(\Om)}{\mathcal{B}_g(\Om)} \left(\int_{\Om} |g||u|^p \dx  +  \norm{\nu} + \nu_{\infty}\right)  = \overline{\lim}_{n \ra \infty} \int_{\Om} |g||u_n|^p \dx ,
\end{align*}
which is a contradiction. Thus $\norm{\nu}=\nu_{\infty}=0$.
Therefore, 
$\displaystyle\lim_{n \ra \infty} \int_{\Om}|g||u_n|^p\dx=\int_{\Om} |g||u|^p \dx$. Further, since $u_n \wra u$ in $\DpA$, we have 
$$\int_{\Om} [|\nabla u|_A^p + V^+|u|^p] \dx\leq   \underline{\lim}_{n\ra \infty} \int_{\Om} [|\nabla u_n|_A^p + V^+|u_n|^p] \dx.$$ 
Also, the assumption $V^- \in \mathcal{H}_{p,0}(\Om,V^+)$ implies that $\lim_{n\ra \infty} \int_{\Om} V^-|u_n|^p \dx=\int_{\Om} V^-|u|^p \dx$ (by Remark \ref{Rmk:Hptilde}-$(iii)$). Consequently, $\mathcal{R}(u)$, the Rayleigh-Ritz quotient of $u$, satisfies  
$$\frac{1}{\mathcal{B}_g(\Om)} \leq \mathcal{R}(u) \leq \frac{\underline{\lim}_{n \ra \infty}\int_{\Om} [|\nabla u_n|_A^p+V|u_n|^p] \dx}
{\lim_{n \ra \infty}\int_{\Om} |g| |u_n|^p \dx} \leq \lim_{n \ra \infty} \mathcal{R}(u_n) = \frac{1}{\mathcal{B}_g(\Om)} \,.$$
Hence, $\mathcal{B}_g(\Om)$ is attained at $u$.  
\end{proof}
\begin{remark} \rm
Theorems~\ref{Thm:best_constant} and \ref{bestconst_thm} provide different sufficient conditions for the attainment of the best constant of Hardy-type equalities for $Q_{p,A,V}$. Let us compare these conditions. In Theorem~\ref{bestconst_thm}, we assume that $V \in \mathcal{M}^{q}_{\loc}(p;\Om)$ with $V^- \in \mathcal{H}_{p,0}(\Om,V^+)$. 
Consider for example the operator $-\Delta_p(u) -\gl |x|^{-p}$ in $\Gw= \R^N\setminus \{0\}$ with $0<\gl< ((p-1)/p)^p$, and take  $0\leq g \in C_c^{\infty}(\Om)$. Then $V =-V^-=-\gl |x|^{-p} \not\in  \mathcal{H}_{p,0}(\Om,0)$ but $V_-$  satisfies the integrability condition $\int_{\Gw \setminus K}  V^- G^p\dx<\infty$, where $G$ is a positive solution the equation $(-\Delta_p(u) -\gl |x|^{-p})[u]=0$  of minimal growth in a neighborhood of infinity in $\Gw$. Hence, $V$ satisfies the condition of Theorem~\ref{Thm:best_constant} but not of Theorem~\ref{bestconst_thm}.  On the other hand, in view of Lemma~\ref{lem-Gsg-0},  if $g \in \mathcal{M}^{q}_{\loc}(p;\Om)$, then $\Gs_g \cap \Om =\phi$. Thus, Theorem~\ref{bestconst_thm}  allows stronger local singularities  of $g$ than considered on Theorem~\ref{Thm:best_constant}.
\end{remark}
Next, we characterize the Hardy-weights $g$ such that  $T_g$ is compact on $\DpAV$ in the case  $V=V^+\geq 0$.
\begin{theorem} \label{Cpct_Char}
Let $V $ be nonnegative, and $g \in \mathcal{H}_p(\Om,V)$, $g\neq 0$. Then $T_g$ is compact on $\mathcal{D}^{1,p}_{A,V}$ if and only if $\mathbb{S}_g^*=\mathbb{S}_g^{\infty} =\infty$. 
\end{theorem}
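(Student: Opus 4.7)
I would prove the two implications separately, handling sufficiency by a direct application of the concentration compactness machinery just developed (Lemma~\ref{mlc2} and Lemma~\ref{lem:abs_cont}) and necessity via a concentration construction that produces a weakly null sequence with $T_g\equiv 1$. Since $V\geq 0$, hypothesis~\textbf{(H1)} is automatic ($V^-=0\in\mathcal{H}_{p,0}(\Om,V^+)$) and $\mathcal{D}_{A,V^+}^{1,p}(\Om)=\mathcal{D}_{A,V}^{1,p}(\Om)$. Observing also that $T_g=T_{|g|}$ and that $|g|$ is a nonnegative element of $\mathcal{H}_p(\Om,V)$, every application of Lemma~\ref{mlc2} or Lemma~\ref{lem:abs_cont} below is made with the Hardy-weight $|g|$.

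\textbf{Sufficiency.} Assume $\S_g^*=\S_g^\infty=\infty$ and let $\phi_n\wra\phi$ in $\mathcal{D}_{A,V}^{1,p}(\Om)$. By the standard sub-subsequence principle, to show $T_g(\phi_n)\to T_g(\phi)$ it is enough, given an arbitrary subsequence, to extract a further subsequence along which this holds. Along such a subsequence I would invoke Banach--Alaoglu for the measures $\nu_n,\Gamma_n,\widetilde\Gamma_n$ built from $|g|$ to obtain weak$^*$ limits $\nu,\Gamma,\widetilde\Gamma$, and combine Remark~\ref{Remark:cpct}-(iii) with a diagonal exhaustion of $\Om$ to pass to a further subsequence along which $\phi_n\to\phi$ a.e. Since $(\phi_n-\phi)$ is bounded in $\mathcal{D}_{A,V}^{1,p}(\Om)$, both $\|\Gamma\|$ and $\Gamma_\infty$ are finite. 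Lemma~\ref{lem:abs_cont} then yields $\S_g^*\|\nu\|\leq\|\Gamma\|<\infty$, which forces $\|\nu\|=0$, and Lemma~\ref{mlc2}(i) yields $\S_g^\infty\nu_\infty\leq\Gamma_\infty<\infty$, which forces $\nu_\infty=0$. Inserting these into Lemma~\ref{mlc2}(ii) gives $\uplim_{n\to\infty}T_g(\phi_n)=T_g(\phi)$, while Fatou applied to $|g||\phi_n|^p$ (justified by the a.e.\ convergence) gives $T_g(\phi)\leq\lowlim_{n\to\infty}T_g(\phi_n)$, so $T_g(\phi_n)\to T_g(\phi)$ along the subsequence.

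\textbf{Necessity (contrapositive).} If $\S_g^*<\infty$, pick $x_0\in\overline\Om$ with $\S_g(x_0)<\infty$ and, using the defining limit in $r$, select $r_n\downarrow 0$ and $\phi_n\in W^{1,p}(\Om)\cap C_c(\Om)$ with $\supp\phi_n\subset\Om\cap B_{r_n}(x_0)$, $\int_\Om|g||\phi_n|^p\dx=1$, and $Q_{p,A,V}(\phi_n)\leq\S_g(x_0)+1$. Then $(\phi_n)$ is bounded in $\mathcal{D}_{A,V}^{1,p}(\Om)$; pass to a weakly convergent subsequence $\phi_n\wra\phi$. The shrinking supports force $\phi_n(x)\to 0$ for every $x\neq x_0$, which combined with the $L^p_\loc$ convergence of Remark~\ref{Remark:cpct}-(iii) yields $\phi=0$ a.e., hence $T_g(\phi)=0$. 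If $T_g$ were compact, this would force $T_g(\phi_n)\to 0$, contradicting $T_g(\phi_n)\equiv 1$. The case $\S_g^\infty<\infty$ is entirely analogous: choose $\phi_n$ supported in $\Om\cap\overline{B_{R_n}}^c$ with $R_n\uparrow\infty$ subject to the same normalization and bound; the supports escaping to infinity force $\phi_n(x)\to 0$ pointwise on $\Om$, and the same contradiction closes the argument.

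\textbf{Main obstacle.} The proof is essentially a repackaging of the concentration compactness machinery already built in this section, so no single step is genuinely deep. The delicate point is the subsequence bookkeeping in the sufficiency direction: Lemma~\ref{mlc2} is formulated along a fixed weakly convergent sequence, so one must extract simultaneously for the weak$^*$ convergence of the three measures and for a.e.\ convergence of $\phi_n$, and then invoke the sub-subsequence principle to transfer the conclusion back to the original sequence. A secondary subtlety is identifying the weak limit $\phi$ in the necessity construction as the zero function, which relies on the $L^p_\loc$ compactness of Remark~\ref{Remark:cpct}-(iii) applied along an exhaustion of $\Om$ avoiding $x_0$ (respectively, by fixed balls in the $\S_g^\infty$ case).
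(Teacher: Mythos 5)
Your proposal is correct and follows essentially the same route as the paper: sufficiency via Lemma~\ref{lem:abs_cont} and Lemma~\ref{mlc2} to force $\|\nu\|=\nu_\infty=0$ and then Fatou, and necessity by the contrapositive, building a normalized bounded sequence with supports shrinking to a point (respectively escaping to infinity) whose weak limit is zero. Your extra care with the sub-subsequence bookkeeping and with reducing to the nonnegative weight $|g|$ is a welcome refinement of details the paper leaves implicit.
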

\begin{proof}
Assume that  $T_g$ is compact on $\mathcal{D}^{1,p}_{A,V} (\Om)$, and on the contrary there exists $x \in \overline{\Om}$ such that $\mathbb{S}_g(x)<\infty$. We may assume that $|g|>0$ in a neighborhood of $\Om \cap B_{{1}/{n_0}}(x)$. By the definition of $\mathbb{S}_g(x)$, 
 for each $n \in \N$  large enough there exists $\phi_n \in \mathcal{D}^{1,p}_{A,V} (\Om \cap B_{{1}/{n}}(x))$ such that
$$Q_{p,A,V}(\phi_n) < \mathbb{S}_g(x) + \frac{1}{n}\, ,  \quad \mbox{and } \int_{\Om \cap B_{{1}/{n}}(x)} |g||\phi_n|^p \dx =1.$$
This implies that $(\phi_n)$ is a bounded sequence in $\mathcal{D}^{1,p}_{A,V} (\Om)$, and hence, $\phi_n \wra \phi$ in $\mathcal{D}^{1,p}_{A,V} (\Om)$ (up to a subsequence). Clearly, $\phi =0$ as the supports of $\phi_n$ are shrinking to a singleton set $\{x\}$. Thus, by the compactness of $T_g$, it follows that $\lim_{n \ra \infty} \int_{\Om} |g||\phi_n|^p \dx =0$, which is a contradiction. Therefore, $\mathbb{S}_g(x)=\infty$. Since $x \in \overline{\Om}$ is arbitrary, we infer that $\mathbb{S}_g^{*}=\infty$. Following a similar arguments, one shows that $\mathbb{S}_g^{\infty}=\infty$. 

Conversely, assume that $g \in \mathcal{H}_p(\Om,V)$ satisfies $\mathbb{S}_g^{*} =\mathbb{S}_g^{\infty} =\infty$. Let $(\phi_n)$ be a sequence in $\mathcal{D}^{1,p}_{A,V} (\Om)$ such that $\phi_n \wra \phi$ in $\mathcal{D}^{1,p}_{A,V} (\Om)$. Then, using Lemma \ref{lem:abs_cont},  we conclude that $\|\nu\|=0$ and using Lemma \ref{mlc2}-$(i)$, we have $\nu_{\infty}=0$. Thus, by Lemma \ref{mlc2}-$(ii)$ and Fatou's lemma, we obtain
$$\overline{\lim}_{n \ra \infty} \int_{\Om}|g||\phi_n|^p \dx = \int_{\Om}|g||\phi|^p \dx \leq \underline{\lim}_{n \ra \infty} \int_{\Om}|g||\phi_n|^p \dx \,.$$
Hence, $\lim_{n \ra \infty} \int_{\Om}|g||\phi_n|^p \dx = \int_{\Om}|g||\phi|^p \dx$. This proves that $T_g$ is compact on $\mathcal{D}^{1,p}_{A,V} (\Om).$
\end{proof}
Finally, we provide a way of producing Hardy-weights for which the best constant in \eqref{WH} is attained.  
\begin{proposition}\label{prop-6-10}
 Let $V$ satisfy {\rm{(H1)}}, and let $g_0 \in \H_p(\Om,V)$ be a nonnegative and nonzero function satisfying $\S_{g_0}^{*}=\S_{g_0}^{\infty}=\infty$. Then, for any nonnegative and nonzero $g \in \H_p(\Om,V)$, and $\varepsilon >0$ satisfying $\vge>\S_{g_0}/\S_g$, we have  
	$\S_{g+\varepsilon g_0}< \min\{\S_{g+\varepsilon g_0}^{*},\S_{g+\varepsilon g_0}^{\infty}$\}.

Moreover, if  for such an $\vge$, the potential $g+\varepsilon g_0$ satisfies  {\rm{(H2)}}, then $\S_{g+\varepsilon g_0}$ is achieved at some $\psi \in \mathcal{D}^{1,p}_{A,V^+} (\Om)$. 
\end{proposition}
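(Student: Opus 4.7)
The plan is to verify the spectral gap condition $\S_{g+\vge g_0} < \min\{\S_{g+\vge g_0}^{*}(\Gw), \S_{g+\vge g_0}^{\infty}(\Gw)\}$, after which the attainment claim follows immediately from Theorem \ref{bestconst_thm}: (H1) on $V$ is a standing hypothesis, (H2) for $g + \vge g_0$ is assumed explicitly, and the spectral gap is precisely what the theorem requires. The driving observation is that the perturbation $\vge g_0$ leaves the local and far-field spectral quantities of $g$ unchanged, because $\S_{g_0}^{*} = \S_{g_0}^{\infty} = \infty$, while it strictly decreases the global infimum.

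For the invariance claim, fix $x \in \overline{\Om}$ and $r > 0$, and abbreviate
$$\S_{h,r}(x) := \inf\left\{\frac{Q_{p,A,V}(\phi)}{\int_{\Om} h|\phi|^p \dx} \;\Big|\; 0\neq \phi \in \mathcal{D}^{1,p}_{A,V^+}(\Om \cap B_r(x))\right\}$$
for $h \in \{g, g_0, g+\vge g_0\}$. Summing the two restricted Hardy inequalities yields, for every admissible $\phi$,
$$\int_{\Om} (g+\vge g_0)|\phi|^p \dx \leq Q_{p,A,V}(\phi)\left(\frac{1}{\S_{g,r}(x)} + \frac{\vge}{\S_{g_0,r}(x)}\right),$$
and hence $\S_{g+\vge g_0, r}(x) \geq \bigl(\S_{g,r}(x)^{-1} + \vge\,\S_{g_0,r}(x)^{-1}\bigr)^{-1}$. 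Letting $r \to 0$ and using $\S_{g_0}(x,\Gw) = \infty$ (forced by $\S_{g_0}^{*} = \infty$) produces $\S_{g+\vge g_0}(x,\Gw) \geq \S_g(x,\Gw)$; the reverse bound is immediate from $g + \vge g_0 \geq g \geq 0$. Taking the infimum over $x \in \overline{\Om}$ gives $\S_{g+\vge g_0}^{*}(\Gw) = \S_g^{*}(\Gw)$, and the entirely analogous argument with $\Om \cap \overline{B_R}^c$ in place of $\Om \cap B_r(x)$ and $R \to \infty$ delivers $\S_{g+\vge g_0}^{\infty}(\Gw) = \S_g^{\infty}(\Gw)$.

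For the strict decrease, pick a sequence $(\phi_n) \subset W^{1,p}(\Om) \cap C_c(\Om)$ satisfying $\int_{\Om} g_0|\phi_n|^p \dx = 1$ and $Q_{p,A,V}(\phi_n) \to \S_{g_0}$. Since $g \geq 0$,
$$\S_{g+\vge g_0} \leq \frac{Q_{p,A,V}(\phi_n)}{\int_{\Om} g|\phi_n|^p \dx + \vge} \leq \frac{Q_{p,A,V}(\phi_n)}{\vge} \longrightarrow \frac{\S_{g_0}}{\vge}.$$
Rewriting the hypothesis $\vge > \S_{g_0}/\S_g$ as $\S_{g_0}/\vge < \S_g$, and invoking $\S_g \leq \min\{\S_g^{*}(\Gw), \S_g^{\infty}(\Gw)\}$ (restricting the class of test functions only raises the infimum), we conclude
$$\S_{g+\vge g_0} \leq \frac{\S_{g_0}}{\vge} < \S_g \leq \min\{\S_g^{*}(\Gw), \S_g^{\infty}(\Gw)\} = \min\{\S_{g+\vge g_0}^{*}(\Gw), \S_{g+\vge g_0}^{\infty}(\Gw)\}.$$
Theorem \ref{bestconst_thm} applied to the Hardy-weight $g + \vge g_0$ then furnishes a minimizer $\psi \in \mathcal{D}^{1,p}_{A,V^+}(\Om)$. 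The only delicate step is the invariance argument: the subadditive bound on $\S_{g+\vge g_0,r}(x)^{-1}$ must survive the passage to the limits $r \to 0$ and $R \to \infty$ cleanly, which is precisely the role of $\S_{g_0}^{*} = \S_{g_0}^{\infty} = \infty$.
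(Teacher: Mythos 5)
Your proof is correct and follows essentially the same route as the paper's: the subadditivity of the reciprocals of the restricted constants, combined with $\S_{g_0}(x,\Gw)=\S_{g_0}^{\infty}=\infty$, gives the invariance of $\S^{*}$ and $\S^{\infty}$ under the perturbation, and testing with $g_0$-normalized functions gives $\S_{g+\vge g_0}\leq \S_{g_0}/\vge<\S_g$. The one (minor) difference is that you use a minimizing sequence for $\S_{g_0}$ rather than an actual minimizer, which neatly sidesteps the paper's appeal to Theorem~\ref{bestconst_thm} for $g_0$ itself --- a step that would additionally require {\rm{(H2)}} for $g_0$, which is not among the stated hypotheses.
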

\begin{remark}\rm
	If $g_0$ and $g$ are  nonzero, nonnegative functions  in $\mathcal{M}^{q}_{\loc}(p;\Om) \cap \H_p(\Om,V)$, and $g_0$ has a compact support in $\Gw$, then  the first part of the theorem holds for $\vge>\S_{g_0}/\S_g$. 
\end{remark}
\begin{proof}[Proof of Proposition~\ref{prop-6-10}]
First, we claim that $\S_g^{*}=\S_{g+g_0}^{*}$. For any $x \in \overline{\Om}$, we have
$$\int_{\Om} (g+g_0) |\phi|^p \dx \leq 
\left[\frac{1}{\S_g(\Om \cap B_r(x))}+ \frac{1}{\S_{g_0}(\Om \cap B_r(x))}\right] 
\!Q_{p,A,V}(\phi) \quad  \forall \phi \in \mathcal{D}^{1,p}_A(\Om \cap B_r(x)) \,.$$
Thus, 
$$\frac{1}{\S_{g+g_0}(\Om \cap B_r(x))} \leq \frac{1}{\S_g(\Om \cap B_r(x))}+ \frac{1}{\S_{g_0}(\Om \cap B_r(x))}.$$ 
By taking $r \ra 0$, we get $\S_{g}(x,\Om) \leq \S_{g+g_0}(x,\Om)$ (as $\S_{g_0}(x,\Om)=\infty$ for all $x$). Obviously,  $\S_{g+g_0}(x,\Om) \leq \S_{g}(x,\Om) $ (as $g,g_0$ are nonnegative). Hence, $\S_{g}(x,\Om) = \S_{g+g_0}(x,\Om)$ for all $x \in \R^N$. Consequently, $\S_{g}^{*} = \S_{g+g_0}^{*}$. Repeating similar  arguments, it follows that $\S_{g}^{\infty} = \S_{g+g_0}^{\infty}$. 

Now, we are ready to prove the proposition. By Theorem \ref{bestconst_thm}, $\S_{g_0}$ is achieved at some $\varphi \in \mathcal{D}^{1,p}_{A,V^+} (\Om)$. For $\varepsilon > \S_{g_0}/\S_g$, 
we have    
$$\S_{g+\varepsilon g_0} \leq \frac{Q_{p,A,V}(\varphi)}{\int_{\Om} (g+\varepsilon g_0)|\varphi|^p \dx} \leq  \frac{Q_{p,A,V}(\varphi)}{\vge\int_{\Om}  g_0|\varphi|^p \dx} = \frac{\S_{g_0}}{\vge} < \S_g .$$
Therefore, 
$$\S_{g+\varepsilon g_0}< \S_g \leq \min\{\S_g^*,\S_g^{\infty}\}= \min\{\S_{g+\varepsilon g_0}^*,\S_{g+ \varepsilon g_0}^{\infty}\}.$$
Moreover,  if $g+\varepsilon g_0$ satisfies  {\rm{(H2)}}, then the second assertion of the proposition follows since $g+\varepsilon g_0$ satisfies the assumptions of Theorem~\ref{bestconst_thm}. 
\end{proof}

 \appendix
 \section{Auxiliary lemmas}\label{appendix1}
The following lemma is an extension of \cite[Lemma~2.10]{DP} to the $(p,A)$-Laplacian case.
 \begin{lemma}\label{weak_lapl}
 	Let $0<u\in W^{1,p}_\loc \cap C(\Omega)$. Let $F\in C^2(\R_+)$ satisfy $F' \geq 0$, and for $p<2$ assume further that $F'(s)^{p-2}F''(s) \to 0$ as $s\to s_0$ where $s_0$ is any critical point of $F$.  Then the following formula holds in the weak sense:
 	\begin{equation}\label{weak_eq}
 	-\Delta_{p,A}(F(u))=-|F'(u)|^{p-2}\left[(p-1)F''(u)|\nabla u|_A^p+F'(u)\Delta_{p,A}(u)\right].
 	\end{equation}
 	Moreover, if $\Delta_{p,A}(u)\in L^1_\loc(\Gw)$, then $\Delta_{p,A}(F(u)) \in L^1_\loc(\Gw)$.
 \end{lemma}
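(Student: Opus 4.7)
The plan is to take the distributional definition of $-\Delta_{p,A}(F(u))$ and unwrap it using the Sobolev chain rule together with a test-function manipulation that produces the $\Delta_{p,A}(u)$ term. First I would record the chain rule consequences. Since $u\in W^{1,p}_\loc(\Gw)\cap C(\Gw)$ is locally bounded and $F\in C^2(\R_+)$, the standard Sobolev chain rule gives $F(u)\in W^{1,p}_\loc(\Gw)$ with $\nabla F(u)=F'(u)\nabla u$ a.e., and because $F'\geq 0$ this yields
\[
|\nabla F(u)|_A^{p-2}A\nabla F(u)=(F'(u))^{p-1}|\nabla u|_A^{p-2}A\nabla u\qquad \text{a.e.\ in }\Gw.
\]
More delicately, I need $(F'(u))^{p-1}\in W^{1,p}_\loc(\Gw)\cap C(\Gw)$ with weak gradient $(p-1)(F'(u))^{p-2}F''(u)\nabla u$. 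This reduces to showing that $s\mapsto (F'(s))^{p-1}$ is $C^1$ on $\R_+$ with derivative $(p-1)(F'(s))^{p-2}F''(s)$. For $p\geq 2$ this is immediate since $(F')^{p-2}$ is continuous and vanishes at critical points of $F$. For $p<2$ the factor $(F')^{p-2}$ is singular at critical points, and this is precisely where the hypothesis $F'(s)^{p-2}F''(s)\to 0$ as $s\to s_0$ is used: it extends the derivative continuously by zero across the critical set, yielding the required $C^1$ regularity. Consequently the coefficient $(F'(u))^{p-2}F''(u)$ is continuous on $\Gw$, hence bounded on compacts, and together with $|\nabla u|\in L^p_\loc$ this confirms $(F'(u))^{p-1}\in W^{1,p}_\loc$.

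Second, I would test the identity against an arbitrary $\phi\in C_c^\infty(\Gw)$. By the distributional definition of the $(p,A)$-Laplacian and the identity above,
\[
\langle -\Delta_{p,A}F(u),\phi\rangle=\int_\Gw (F'(u))^{p-1}|\nabla u|_A^{p-2}\langle A\nabla u,\nabla\phi\rangle\,\dx.
\]
Using the product rule $\nabla[(F'(u))^{p-1}\phi]=(p-1)(F'(u))^{p-2}F''(u)\phi\,\nabla u+(F'(u))^{p-1}\nabla\phi$ from Step~1, I would rewrite $(F'(u))^{p-1}\nabla\phi=\nabla[(F'(u))^{p-1}\phi]-(p-1)(F'(u))^{p-2}F''(u)\phi\,\nabla u$ and substitute to obtain
\[
\langle-\Delta_{p,A}F(u),\phi\rangle=\int_\Gw |\nabla u|_A^{p-2}\langle A\nabla u,\nabla[(F'(u))^{p-1}\phi]\rangle\,\dx-\int_\Gw (p-1)(F'(u))^{p-2}F''(u)|\nabla u|_A^p\,\phi\,\dx.
\]
Since $(F'(u))^{p-1}\phi$ is a bounded element of $W^{1,p}_c(\Gw)$ by Step~1, the first integral on the right is $\langle -\Delta_{p,A}u,\,(F'(u))^{p-1}\phi\rangle$, interpreted as the distributional pairing $\langle -(F'(u))^{p-1}\Delta_{p,A}u,\phi\rangle$. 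Noting that $|F'|^{p-2}F'=(F')^{p-1}$ and $|F'|^{p-2}F''=(F')^{p-2}F''$ on $\{F'>0\}$ (both products extended by zero where $F'=0$, as justified in Step~1), this yields \eqref{weak_eq}.

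For the \emph{Moreover} assertion, the coefficient $(F'(u))^{p-2}F''(u)$ is locally bounded by Step~1 and $|\nabla u|_A^p\in L^1_\loc$, so the first term on the right of \eqref{weak_eq} is in $L^1_\loc(\Gw)$; and since $(F'(u))^{p-1}$ is continuous, the second term $(F'(u))^{p-1}\Delta_{p,A}u$ lies in $L^1_\loc(\Gw)$ whenever $\Delta_{p,A}u$ does. Hence $\Delta_{p,A}F(u)\in L^1_\loc(\Gw)$.

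The hard part will be Step~1 in the regime $p<2$: the regularity of the composition $(F'(u))^{p-1}$ hinges on the limiting condition $F'(s)^{p-2}F''(s)\to 0$ at critical points of $F$. Without this assumption, $(F'(u))^{p-1}$ need not be weakly differentiable with the claimed gradient, and neither the test function $(F'(u))^{p-1}\phi$ nor the coefficient appearing on the right would have acceptable regularity to close the argument. Everything else in the proof is bookkeeping once this chain-rule identity is in hand.
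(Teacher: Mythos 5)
Your proposal is correct and follows essentially the same route as the paper: apply the chain rule to write $|\nabla F(u)|_A^{p-2}A\nabla F(u)=|F'(u)|^{p-2}F'(u)\,|\nabla u|_A^{p-2}A\nabla u$, split via the product rule so that $|F'(u)|^{p-2}F'(u)\varphi$ becomes an admissible test function for $-\Delta_{p,A}u$, and use the $C^1$-regularity of $s\mapsto|s|^{p-2}s$ (for $p\geq 2$) respectively the hypothesis $F'(s)^{p-2}F''(s)\to 0$ at critical points (for $p<2$) to justify differentiating $|F'(u)|^{p-2}F'(u)$. The only cosmetic difference is that you write $(F'(u))^{p-1}$ in place of the paper's $|F'(u)|^{p-2}F'(u)$, which is legitimate since $F'\geq 0$.
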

 \begin{proof}
 	Denote $g:=-\Delta_{p,A}(u)$, and let $\varphi\in C_c^{\infty}(\Om)$. By the product and chain rules, we have
 	\begin{multline*}
 	\int_\Omega |\nabla F(u)|_A^{p-2}A\nabla F(u) \!\cdot \! \nabla \varphi\dnu =\\
 	-\!\!\int_\Omega \!|\nabla u|_A^{p\!-\!2}A\nabla u \!\cdot \! \nabla \!\left(|F'(u)|^{p\!-\!2}\!F'(u)\right)\!\varphi\!\dnu
 	+\int_\Omega\!\! |\nabla u|_A^{p\!-\!2}A\nabla u\!\cdot \!\nabla \!\left(|F'(u)|^{p\!-\!2}\!F'(u)\varphi\right)\!\dnu
 	\end{multline*}
 	Note that for $p\geq 2$, the function $\psi(s):=|s|^{p-2}s$ is continuously differentiable, and $\psi'(s):=(p-1)|s|^{p-2}$. Moreover, by our assumption on $F$ it follows that 
 	$|F'(u)|^{p-2}F'(u)\varphi \in W_c^{1,p}(\Gw)$. Consequently, the second term of the right hand side in the above equality equals 
 	$$\int_\Omega |\nabla u|_A^{p-2} A \nabla u\cdot \nabla \left(|F'(u)|^{p-2}F'(u)\varphi\right)\dnu=
 	\int_\Omega g|F'(u)|^{p-2}F'(u)\varphi\dnu.$$
 	Therefore,
 	\begin{multline*}
 	\int_\Omega |\nabla F(u)|_A^{p-2} A \nabla F(u)\cdot \nabla \varphi\dnu =\\
 	 -\int_\Omega |\nabla u|_A^{p-2}A\nabla u\cdot \nabla \left(|F'(u)|^{p-2}F'(u)\right)\varphi \dnu
 	 +\int_\Omega g|F'(u)|^{p-2}F'(u)\varphi\dnu.
 	\end{multline*}
 	Consequently, in the weak sense we have
 	$$-\Delta_{p,A}(F(u))=-|\nabla u|_A^{p-2}A\nabla u\cdot \nabla \left(|F'(u)|^{p-2}F'(u)\right)-\Delta_{p,A}(u)|F'(u)|^{p-2}F'(u).$$
 	Since $\psi'(s)\!:=\!(p-1)|s|^{p-2}$ for $s\!\neq \! 0$, and $\psi'$ is integrable at $0$, we have that in the weak sense
 	\begin{equation*}\label{derive_2}
 	|\nabla u|_A^{p-2}A\nabla u\cdot \nabla \left(|F'(u)|^{p-2}F'(u)\right)=(p-1)|F'(u)|^{p-2} F''(u) |\nabla u|_A^p.
 	\end{equation*}
 	This implies \eqref{weak_eq}, and hence clearly completes the proof of Lemma~\ref{weak_lapl}.
 \end{proof}
\begin{center}
	{\bf Acknowledgments}
\end{center}
The authors thank F.~Gesztesy and  B.~Simon for helpful comments on the literature. The authors acknowledge the support of the Israel Science Foundation (grant 637/19) founded by the
Israel Academy of Sciences and Humanities.

\end{document}